\xpatchcmd{\proof}{\itshape}{\normalfont\bfseries}{}{}
\newtheoremstyle{repeat}{}{}{\itshape}{}{\bfseries}{.}{.5em}{#3, repeated}
\newtheorem{theorem}{Theorem}[section]
\newtheorem{proposition}[theorem]{Proposition}
\newtheorem{lemma}[theorem]{Lemma}
\newtheorem{corollary}[theorem]{Corollary}
\theoremstyle{definition}
\newtheorem{definition}[theorem]{Definition}
\newtheorem{remark}[theorem]{Remark}
\newtheorem{example}[theorem]{Example}
\newtheorem{question}[theorem]{Question}
\newtheorem{fact}[theorem]{Fact}
\theoremstyle{repeat}
\newtheorem*{repeated-theorem}{Repeat}
\newcommand{\E}{\mathcal{E}}
\newcommand{\F}{\mathcal{F}}
\newcommand{\G}{\mathcal{G}}
\newcommand{\B}{\mathcal{B}}
\renewcommand{\L}{\mathcal{L}}
\newcommand{\N}{\mathbb{N}}
\newcommand{\X}{\mathcal{X}}
\newcommand{\U}{\mathcal{U}}
\newcommand{\Z}{\mathbb{Z}}
\newcommand{\R}{\mathbb{R}}
\newcommand{\TMod}[1][T]{#1 \text{--} \mathbf{Mod}} 
\newcommand{\Set}{\mathbf{Set}}
\newcommand{\Sh}{\mathbf{Sh}}
\newcommand{\Topos}{\mathbf{Topos}}
\newcommand{\Sub}{\operatorname{Sub}}
\newcommand{\colim}{\mathrm{colim}}
\newcommand{\tp}{\operatorname{tp}}
\newcommand{\id}{{id}}
\renewcommand{\phi}{\varphi}
\newcommand{\ec}[1][]{\ifthenelse{\equal{#1}{}}{\textup{ec}}{{#1}\textup{-ec}}}
\renewcommand{\sec}[1][]{\ifthenelse{\equal{#1}{}}{\textup{sec}}{{#1}\textup{-sec}}}
\newcommand{\coh}{{\textup{coh}}}
\newcommand{\comp}{{\textup{comp}}}
\title{Existentially closed models and locally zero-dimensional toposes}
\author{Mark Kamsma and Joshua Wrigley}
\thanks{The first author is supported by the EPSRC grant EP/X018997/1, and both authors are supported by EPSRC grant EP/V028812/1.}
\email[Mark Kamsma]{mark@markkamsma.nl}
\urladdr[Mark Kamsma]{https://markkamsma.nl}
\address[Mark Kamsma]{School of Mathematical Sciences, Queen Mary University of London, London, E1 4NS, UK}
\email[Joshua Wrigley]{j.wrigley@qmul.ac.uk}
\urladdr[Joshua Wrigley]{https://jlwrigley.github.io/}
\address[Joshua Wrigley]{School of Mathematical Sciences, Queen Mary University of London, London, E1 4NS, UK}
\date{\today \\ \indent \emph{2020 Mathematics Subjects Classification}: Primary: 03G30; secondary: 03B20, 03B22, 18B25, 18C10}
\keywords{classifying topos; geometric theory; geometric logic; more keywords}
\begin{document}

\begin{abstract}
The notion of an existentially closed model is generalised to a property of geometric morphisms between toposes.  We show that important properties of existentially closed models extend to existentially closed geometric morphisms, such as the fact that every model admits a homomorphism to an existentially closed one. Other properties do not generalise: classically, there are two equivalent definitions of an existentially closed model, but this equivalence breaks down for the generalised notion. We study the interaction of these two conditions on the topos-theoretic level, and characterise the classifying topos of the e.c.\ geometric morphisms when the conditions coincide.
\end{abstract}

\maketitle

\setcounter{tocdepth}{1} 
\tableofcontents

\section{Introduction}
Positive model theory replaces the traditional use of full first-order logic with positive logic. This generalises classical model theory, because classical negation can be added back in through \emph{Morleyisation}. Recently, deep model-theoretic ideas have been generalised to the positive context, with negligible concessions in the strength of the results (see e.g., \cite{ben-yaacov_simplicity_2003, ben-yaacov_fondements_2007, haykazyan_spaces_2019, kamsma_bilinear_2023}). Meanwhile, in topos theory ``positive logic'' is known under the name of ``coherent logic'', the finitary fragment of geometric logic. Geometric logic, and thus coherent logic, is known to behave extremely well in toposes, through the study of classifying toposes. With the aim of further cultivating this common ground, we initiate the topos-theoretic study of the protagonists of positive model theory: existentially closed models.

Existentially closed models (hereafter, e.c.\ models) are models of a theory where any (coherent) formula that can be solved in an extension can already be solved in that model itself. The archetypal examples are algebraically closed fields. Any polynomial that can be solved in a field extension of an algebraically closed field already admits a solution in the algebraically closed field itself. In fact, the algebraically closed fields are exactly the e.c.\ models of the theory of fields. In this case, the class of e.c.\ models is first-order axiomatisable, but this is not true in general. It is true, however, that the class of e.c.\ models can always be geometrically axiomatised (see \thref{geometric-axiomatisation-always-possible}). This raises the natural question: what does the classifying topos of the e.c.\ models of a coherent theory look like? A consequence of the axiomatisation is a strong connection between this classifying topos and the notion of zero-dimensionality from topology.

Classically, there are two equivalent definitions of e.c.\ models (cf.\ \thref{existentially-closed-model}). One is in terms of the homomorphisms that have the model as a domain, requiring them to both preserve and reflect the satisfaction of coherent formulas, while the other asserts that whenever a coherent formula $\phi(x)$ does not hold for a tuple of elements $a$, then there is a coherent formula $\psi(x)$, satisfied by $a$, that implies $\neg \psi(x)$ modulo the theory.

We generalise both notions to arbitrary toposes, keeping the name e.c.\ for the former, but naming the latter ``strongly existentially closed'' (or s.e.c.). The distinction is important, because in this generality they are no longer equivalent (\thref{ec-sec-examples}).  The previous appearance of e.c.\ models in the topos-theoretic literature, in \cite{blass_classifying_1983}, uses the latter definition.  However, as we shall see, the former is much better behaved topos-theoretically.


\textbf{Main results.}
We leave the precise statement of our main theorem (\thref{ec-subtopos-characterisation}) until after the relevant terminology and notation has been introduced. When only considering coherent theories with models in $\Set$, a simplification is possible, yielding:
\begin{theorem}
\thlabel{ec-subtopos-characterisation-coherent-set}
Let $T$ be a coherent theory. Then following are equivalent for a subtopos $\G \hookrightarrow \Set[T]$ with enough points:
\begin{enumerate}[label=(\roman*)]
\item $\G \simeq \Set[T^{\ec}]$,
\item $\G$ is locally zero-dimensional and entwined with $\Set[T]$,
\item $\G$ is the maximal s.e.c.-subtopos of $\Set[T]$ with enough points.
\end{enumerate}
\end{theorem}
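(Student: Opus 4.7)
The plan is to deduce this theorem from the more general \thref{ec-subtopos-characterisation} by specialising to the setting where the ambient classifying topos has $\Set$-points and the subtopos $\G$ has enough points. Under these hypotheses, the topos-theoretic e.c.\ and s.e.c.\ conditions should, when evaluated at the $\Set$-points, reduce to the classical formula-level definitions mentioned in the introduction, where the two classically coincide. This coincidence is what collapses the general picture of \thref{ec-subtopos-characterisation} onto the three equivalent clauses stated here.

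I would argue cyclically. For (i) $\Rightarrow$ (ii): the geometric axiomatisation supplied by \thref{geometric-axiomatisation-always-possible} presents $T^{\ec}$ by axioms whose shape (each coherent $\phi$ either holds or is excluded by some $\psi$ with $\psi \vdash \neg \phi$) directly exhibits $\Set[T^{\ec}]$ as locally zero-dimensional, since these separating pairs produce the required clopen subobjects; entwinement of $\Set[T^{\ec}]$ with $\Set[T]$ is then the topos-theoretic translation of the fact, flagged in the abstract, that every model of $T$ admits a homomorphism into an e.c.\ one. For (ii) $\Rightarrow$ (iii): local zero-dimensionality furnishes enough separating implications to force the s.e.c.\ condition to hold at every $\Set$-point of $\G$, while entwinement guarantees that the corresponding quotient theory of $T$ is populated by the expected homomorphic images; any properly larger s.e.c.-subtopos with enough points would have to abandon some separating formula provided by local zero-dimensionality, contradicting the s.e.c.\ clause at a newly admitted $\Set$-point. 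For (iii) $\Rightarrow$ (i): by enough points both $\G$ and $\Set[T^{\ec}]$ are determined by their $\Set$-points, and the classical coincidence of e.c.\ and s.e.c.\ for $\Set$-models of a coherent theory identifies the $\Set$-points of the maximal s.e.c.-subtopos with enough points as precisely the e.c.\ models of $T$; comparing points then yields $\G \simeq \Set[T^{\ec}]$.

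The main obstacle I expect is the (ii) $\Rightarrow$ (iii) direction, and in particular the maximality clause: one must check that no proper s.e.c.-subtopos with enough points properly contains a locally zero-dimensional, entwined $\G$. This requires carefully translating the clopen witnesses supplied by local zero-dimensionality into the formula-level separation $\psi \vdash \neg \phi$ underlying s.e.c., and then invoking the classical argument that an s.e.c.\ $\Set$-model of a coherent theory is already e.c., which blocks any further enlargement. Once this bridge between the topological and syntactic formulations is in place, the rest of the cycle reduces to bookkeeping between subtoposes, their associated quotient theories, and the points those theories classify.
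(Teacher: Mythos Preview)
Your opening sentence is exactly right: this is a direct specialisation of \thref{ec-subtopos-characterisation}. But you then abandon that plan and re-argue the equivalences cyclically, missing how little actually needs to be done. The general theorem already delivers (i) $\Leftrightarrow$ (ii) $\Leftrightarrow$ (iii) once its single hypothesis is verified: that every e.c.\ geometric morphism $\Set \to \Set[T]$ is $(\X,\B)$-s.e.c.\ for some fixed $(\X,\B)$. Since $\Set[T]$ is coherent, \thref{sec-for-coherent} supplies the canonical choice $(\E_\coh,\E_\comp)$ (cf.\ \thref{ec-subtopos-characterisation-coherent}), and the hypothesis then reduces to the classical fact \thref{classical-ec-is-sec} (together with \thref{sec-as-geometric-morphism-is-sec-as-model}). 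The identification $\Set[T^{\ec}] \simeq \Set[T]_{\ec[\Set]}$ is immediate from the definitions. That is the entire proof in the paper.

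Your cyclic argument, besides being unnecessary, has a genuine gap in the (ii) $\Rightarrow$ (iii) maximality step. You claim a properly larger s.e.c.\ subtopos ``would have to abandon some separating formula provided by local zero-dimensionality, contradicting the s.e.c.\ clause at a newly admitted $\Set$-point.'' This is backwards: a larger subtopos corresponds to a \emph{weaker} quotient theory, so nothing is abandoned---certain sequents merely cease to be provable. There is no contradiction in a newly admitted point being s.e.c.; indeed every s.e.c.\ point is e.c.\ regardless. The correct maximality argument (as in the proof of \thref{ec-subtopos-characterisation}) runs the other way: any s.e.c.\ subtopos with enough points has all its $\Set$-points e.c.\ (via \thref{sec-stable-under-postcomposition} and \thref{geometric-morphism-sec-implies-ec}), so each such point factors through $\E_{\ec[\Set]}$ by construction, and hence the whole subtopos does. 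Local zero-dimensionality and entwinement of $\G$ play no role in that step; they are used only to identify $\G$ with $\E_{\ec[\Set]}$, after which maximality is a property of $\E_{\ec[\Set]}$ itself.
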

We briefly explain some of the terms in the above statement.
\begin{enumerate}
    \item The theory $T^{\ec}$ is the common geometric theory of all the e.c.\ models of $T$ in $\Set$, i.e.\ $\Set[T^{\ec}]$ is the classifying topos of the e.c.\ models of $T$.
    \item A topos $\G$ is locally zero-dimensional if it has a generating set of objects whose subobject frames are zero-dimensional (\thref{locally-zero-dimensional-topos}).
    \item Let $T'$ be a quotient theory of $T$ classified by $\G$. Then to say that $\G \simeq \Set[T']$ and $\Set[T]$ are `entwined' should be thought of as follows: for every model $M$ of $T$, there is a model $N$ of $T'$ admitting a homomorphism $M \to N$ (note that the converse is automatically satisfied in this case).
    \item Finally, $\G$ is an s.e.c.-subtopos of $\Set[T]$ if the inverse image $i^*(G_T)$ of the generic model $G_T$ under the inclusion $i: \G \hookrightarrow \Set[T]$ is internally s.e.c.
\end{enumerate}  

Along the way, we prove other important facts about our generalised notion of existential closedness.  For model theory, it is important that every model admits a homomorphism into an e.c.\ model, which is a well-known fact for coherent theories in $\Set$. In \thref{every-geometric-morphism-continues-to-ec}, we show that the same is true for e.c.\ models of geometric theories in arbitrary toposes. Finally, based on existing work \cite{blass_classifying_1983,caramello_atomic_2012}, we apply our machinery to topos-theoretically characterise atomic e.c.\ models (\thref{points-factorising-through-double-negation}) and countably categorical theories (\thref{countably-categorical-iff-atomic}) for coherent theories in $\Set$.

\textbf{Overview.}
Sections \ref{sec:preliminaries-and-notation} and \ref{sec:existentially-closed-models} contain preliminary material.  In Section \ref{sec:preliminaries-and-notation}, we establish some notation and recall basic facts about classifying toposes and coherent toposes.  We recall the two classically equivalent definitions of e.c.\ models in Section \ref{sec:existentially-closed-models}, as well as their basic properties.

In Section \ref{sec:existentially-closed-geometric-morphisms}, we generalise from e.c.\ models in $\Set$ to e.c.\ geometric morphisms $\F \to \E$.  We demonstrate that the property of being e.c.\ is well-behaved in that it is \emph{Morita invariant} (\thref{ec-on-generating-is-enough}). We also introduce the classifying topos $\E_{\ec[\F]}$ of e.c.\ geometric morphisms, and the notion of two toposes $\E$ and $\E'$ being $\F$-entwined.  We show that $\F$-entwinement is an equivalence relation on toposes and it is equivalent to having equivalent classifying toposes of e.c.\ morphisms $\E_{\ec[\F]} \simeq \E'_{\ec[\F]}$ (\thref{entwinement-characterisations}). The key ingredient for that last result is the aforementioned fact that, in any topos, any model admits a homomorphism into an e.c.\ model (\thref{every-geometric-morphism-continues-to-ec}).

For models of a coherent theory in $\Set$, the e.c.\ models and s.e.c.\ models coincide.  In Section \ref{sec:sec-geometric-morphisms}, we generalise s.e.c.\ models to arbitrary geometric morphisms $\F \to \E$. However, the notion of s.e.c.\ we introduce is heavily dependent on a particular choice of presentation for $\E$, encapsulated by a choice of a generating set $\X$ and a basis of subobjects $\B$ for $\X$ (see \thref{ec-sec-examples}). When $\E$ is coherent, the notion becomes somewhat tamer in that there are canonical choices for $\X$ and $\B$ (\thref{sec-for-coherent}).

In Section \ref{sec:locally-zero-dimensional-toposes}, we introduce the notion of a locally zero-dimensional topos. Our terminology is justified since a localic topos is locally zero-dimensional if and only if the corresponding locale is locally zero-dimensional (\thref{sheaves-on-locally-zero-dimensional-locale-are-locally-zero-dimensional}).

Section \ref{sec:classifying-topos-ec-models} combines the preceding content in our main result---a characterisation, under suitable hypotheses, of the classifying topos of e.c.\ models. These unspoken hypotheses are satisfied in the motivating case of a coherent theory and models in $\Set$. We also provide a class of examples where the hypotheses are satisfied in a topos other than $\Set$, namely the topos of ``difference sets'' $\Set^\N$ (see \thref{automorphism-ec-iff-underlying-ec,automorphism-examples}).

Finally, in Sections \ref{sec:atomic-ec-models-and-double-negation-sheaves} and \ref{sec:countably-categorical-theories-and-atomic-toposes}, we relate our work to previous literature on similar topics \cite{blass_classifying_1983,caramello_atomic_2012}.  We give a topos-theoretic characterisation of the model-theoretic notion of an atomic e.c.\ model in Section \ref{sec:atomic-ec-models-and-double-negation-sheaves}, and of countably categoricity (in the positive model-theoretic sence) in Section \ref{sec:countably-categorical-theories-and-atomic-toposes}.  In addition, Section \ref{sec:atomic-ec-models-and-double-negation-sheaves} also establishes connections between the property of being a dense subtopos and the various subtoposes that have been studied throughout the paper.

\textbf{Acknowledgements.} We would like to thank our local research group (Ivan Toma\v{s}i\'{c}, Behrang Noohi, Ming Ng, Luka Ilic and Cameron Michie) for many useful discussions.

\section{Preliminaries and notation}
\label{sec:preliminaries-and-notation}
\subsection{Notation and conventions}
We begin by setting out some conventions:
\begin{enumerate}
    \item Throughout, ``topos'' will mean a Grothendieck topos.
    \item We will often omit the arrow when referring to subobjects in a topos, instead writing $A \leq X$ to denote a subobject (i.e.\ an equivalence class of monomorphisms $A \hookrightarrow X$).  Since it will be clear from context, when referring to the subobject lattice of an object, we will also omit the ambient topos, e.g.\ writing $\Sub(X)$ instead of $\Sub_\E(X)$.
    \item When discussing logical formulas, we will follow the standard model-theoretic convention that single letters denote tuples, e.g.\ $x$ and $y$ will denote (finite, potentially empty, potentially of mixed sorts) tuples of variables, and if $M$ is a model in $\Set$, then $a \in M$ refers to a tuple of elements.
    
    The interpretation of a logical formula $\phi(x)$ in a model $M$ will be written as $\phi(M)$.  If $M$ is a model in $\Set$, then $\phi(M)$ is a subset of some product of sorts in $M$ (corresponding to the sorts of $x$).  If $M$ is a model in an arbitrary topos $\F$, $\phi(M)$ is a subobject of a product of sorts.  We will still write $\phi(M) \leq M$ regardless.
\end{enumerate}
\subsection{Classifying toposes}
We now briefly recall the basics of classifying topos theory. Given a geometric theory $T$, a \emph{classifying topos} $\Set[T]$ for $T$ is a topos for which there is a natural equivalence of categories
\begin{equation}\label{classifying-topos-equation}
    \TMod[T](\F) \simeq \Topos(\F,\Set[T]) 
\end{equation}
for any topos $\F$, where $\TMod[T](\F)$ denotes the category of models of $T$ internal to $\F$ and their homomorphisms, while $\Topos(\F,\Set[T])$ denotes the category of geometric morphisms $\F \to \Set[T]$ and natural transformations between these. Every geometric theory has a classifying topos, and every topos is the classifying topos for some geometric theory.  The universal property satisfied by the classifying topos of $T$ ensures that it is defined uniquely up to equivalence.

More explicitly, the classifying topos $\Set[T]$ contains an internal model of $T$ called the \emph{generic model} $G_T$. The above correspondence is then given by sending a geometric morphism $p: \F \to \Set[T]$ to the model $p^*(G_T)$ of $T$ in $\F$.
\begin{fact} We note some important facts about the generic model:
    \begin{enumerate}[label = (\roman*)]
        \item The interpretations of formulas in the generic model, i.e.\ the objects $\phi(G_T)$, generate the topos $\Set[T]$;
        \item The subobjects in $\Set[T]$ of $\phi(G_T)$ are precisely the subobjects of the form $\psi(G_T) \leq \phi(G_T)$ (\cite[Lemma D1.4.4(iv)]{johnstone_sketches_2002}), where $\psi$ is also a geometric formula, and moreover there is an inclusion of subobjects $\psi(G_T) \leq \phi(G_T)$ if and only if $T$ proves the sequent $\psi(x) \vdash_x \phi(x)$.
    \end{enumerate}
\end{fact}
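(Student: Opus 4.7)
The plan is to pass through the canonical syntactic presentation of $\Set[T]$ and deduce both clauses from properties of this presentation. One constructs $\Set[T]$ as the topos of sheaves $\Sh(\C_T, J_T)$ on the geometric syntactic category $\C_T$ of $T$ equipped with its geometric coverage $J_T$, whose objects are geometric formulas-in-context $\{x.\phi\}$ and whose morphisms are $T$-provable equivalence classes of $T$-provably functional geometric formulas. The generic model $G_T$ is tautological in this presentation: each sort $s$ is the sheafification of the representable $\{x_s.\top\}$, and more generally $\phi(G_T)$ is the sheafification of the representable $\{x.\phi\}$.

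For (i), it is a standard fact about any sheaf topos that the images of representables under the associated sheaf functor form a generating family. Under the identification above, this says exactly that the objects $\phi(G_T)$ generate $\Set[T]$.

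For (ii), one must identify $\Sub(\phi(G_T))$ in $\Set[T]$ with the poset of geometric formulas $\psi(x)$ that $T$-provably imply $\phi(x)$, modulo provable equivalence. Inside $\C_T$ the monomorphisms into $\{x.\phi\}$ are precisely the inclusions $\{x.\psi\} \hookrightarrow \{x.\phi\}$ induced by sequents $\psi \vdash_x \phi$, ordered by $T$-provable implication; so sheafification, being left exact and hence monomorphism-preserving, yields an order-preserving map from this syntactic subobject poset into $\Sub(\phi(G_T))$. Surjectivity uses that any subobject of the sheafification of a representable arises as the $J_T$-closure of a join of subobjects coming from $\C_T$, and $J_T$ allows such joins to be absorbed into a single geometric formula via infinitary disjunction. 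Injectivity follows from soundness and completeness of geometric logic with respect to the canonical interpretation in $G_T$: if $\psi_1(G_T) \leq \psi_2(G_T)$ as subobjects, then the $J_T$-local inclusion translates into the provable sequent $\psi_1 \vdash_x \psi_2$.

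The subtlest step, for which Johnstone's \cite[Lemma D1.4.4(iv)]{johnstone_sketches_2002} is invoked, is showing that sheafification does not create new subobjects of $\phi(G_T)$ beyond those already expressible by a single geometric formula; \emph{a priori}, the $J_T$-closure of a $\C_T$-subobject could yield something strictly larger and not of the form $\{x.\chi\}$ for any individual $\chi$. The point is that $J_T$ is precisely the coverage generated by $T$-provable geometric covers of the form $\{\{x.\psi_i\} \hookrightarrow \{x.\bigvee_i \psi_i\}\}$, so every such closure is itself cut out by the disjunction $\bigvee_i \psi_i$, which remains a geometric formula. Once this closure property is pinned down, the rest of (ii) reduces to routine book-keeping with the construction of $\Sh(\C_T, J_T)$.
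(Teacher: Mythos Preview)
The paper does not supply its own proof of this statement; it is recorded as a \emph{Fact}, with part~(ii) cited directly to Johnstone's Elephant. Your sketch via the syntactic site $(\C_T, J_T)$ is essentially the standard argument one finds behind that citation, and the main points are correct.

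One small sharpening: since geometric logic admits infinitary disjunction, the syntactic category $\C_T$ already possesses arbitrary unions of subobjects, so every subobject of $\{x.\phi\}$ in $\C_T$ is automatically $J_T$-closed. This is what makes the comparison map $\Sub_{\C_T}(\{x.\phi\}) \to \Sub_{\Set[T]}(\phi(G_T))$ an isomorphism, and it is a cleaner way to phrase the ``no new subobjects under sheafification'' step than the closure-of-joins language you use. Your surjectivity paragraph gestures at this but slightly obscures the mechanism: it is not that a generic $J_T$-closure happens to land on a single formula, but rather that the subobject lattice in $\C_T$ is already complete, so closure does nothing.
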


\subsection{Coherent toposes}
The assertion that every topos classifies some theory necessitates the consideration of geometric logic.  To focus on solely coherent theories, we can restrict to \emph{coherent toposes}.  These are toposes that are, in a strong sense, determined by their compact objects.  In this paper, we will often restrict to the case of a coherent topos to simplify our conditions.
\begin{definition}[{\cite[Section D3.3]{johnstone_sketches_2002}}]
~
\begin{enumerate}[label=(\roman*)]
\item An object $X$ in a topos is said to be \emph{compact} if every jointly epimorphic family $\{Y_i \to X\}_{i \in I}$ admits a jointly epimorphic finite subfamily.
\item An object $X$ is \emph{coherent} if, in addition to being compact, it is \emph{stable} in the sense that for any pair of arrows $Y \to X \leftarrow Z$ with compact domains, their pullback $Y \times_X Z$ is also compact.
\item A topos is said to be \emph{coherent} if it has a generating set of coherent objects.
\end{enumerate}
\end{definition}
\begin{fact}
\thlabel{coherent-topos-facts}
We recall some facts about coherent toposes.
\begin{enumerate}[label=(\roman*)]
\item A topos is coherent if and only if it is the classifying topos of a coherent theory (\cite[Theorem D3.3.1]{johnstone_sketches_2002}).  Indeed, if $T$ is coherent, then the objects $\phi(G_T)$, where $\phi(x)$ is a coherent formula, are coherent (and generate the topos).
\item In a coherent topos there is (up to isomorphism) only a set of coherent objects.
\item A coherent topos $\E$ also \emph{has enough points} (see e.g., \cite[\S IX.11]{maclane_sheaves_1994}), meaning that the class of geometric morphisms $\{p: \Set \to \E\}$ is jointly surjective (i.e.\ the inverse image functors $\{p^*: \E \to \Set\}$ are jointly faithful/conservative).  If $\E$ classifies a theory $T$, this is equivalent to stating that $T$ satisfies a completeness theorem---if $\phi(M) = \psi(M)$ for every model $M$ in $\Set$, then $\phi$ and $\psi$ are $T$-provably equivalent.
\end{enumerate}
\end{fact}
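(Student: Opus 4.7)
The three parts require different techniques, so I sketch each separately.

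For part (i), the ``if'' direction is handled by the standard syntactic site construction: given a coherent theory $T$, form the syntactic category $\C_T$ of coherent formulas in context (with $T$-provably functional coherent formulas as morphisms), equip it with the coherent coverage $J_\coh$ generated by finite jointly covering families, and take $\Set[T] := \Sh(\C_T, J_\coh)$. The generic model $G_T$ is obtained from the canonical interpretation of the signature in this topos, and each representable $\phi(G_T)$ is coherent because (a) the coverage uses only finite jointly epimorphic families, so representables are compact, and (b) finite limits of representables are representable, so the compactness is stable. These objects generate by construction. For the converse, given a coherent topos $\E$, I would form the full subcategory $\Coh(\E)$ of coherent objects and check that it inherits the structure of a small coherent category from $\E$. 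Reading off its sorts, function symbols, relation symbols, and the coherent sequents valid in $\Coh(\E)$, one obtains a coherent theory $T$ whose syntactic coherent category is equivalent to $\Coh(\E)$, and hence whose classifying topos recovers $\E$. This is the coherent analogue of Giraud's theorem.

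For part (ii), I would argue that every coherent object is presented, up to isomorphism, by finitary data over a fixed small generating family of coherent objects: compactness replaces any jointly epic cover by a finite one, and stability ensures the iterated fibre products needed to describe an equivalence relation on such a cover stay coherent. Up to isomorphism, this data forms a set, so $\Coh(\E)$ is essentially small.

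Part (iii), Deligne's theorem, is the main obstacle. My plan would be to combine (i) with a coherent completeness theorem for $T$: if a coherent sequent $\phi \vdash_x \psi$ fails in $T$, one must construct a set-based model of $T$ refuting it, which by the equivalence (\ref{classifying-topos-equation}) produces a point $p: \Set \to \E$ whose inverse image distinguishes $\phi(G_T)$ and $\psi(G_T) \wedge \phi(G_T)$. Since such subobjects of the coherent generators control all subobjects, jointly such points form a conservative family. The delicate step is coherent completeness itself, which I would approach via a Henkin/prime-filter construction: iteratively extend $T \cup \{\phi(c)\}$ (for fresh constants $c$) by fresh witnesses for every existential and by choosing one disjunct whenever a finite disjunction is forced, while preserving consistency with $\neg\psi(c)$. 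The outcome is a ``prime'' completion from which a syntactic term model in $\Set$ can be read off. The non-trivial point is verifying that this iterative process preserves coherent consistency, which requires the compactness-like behaviour of coherent provability.
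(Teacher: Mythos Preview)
The paper does not prove this statement at all: it is labelled a \emph{Fact}, and each part is simply asserted with a citation (\cite[Theorem D3.3.1]{johnstone_sketches_2002} for (i), \cite[\S IX.11]{maclane_sheaves_1994} for (iii), and (ii) is stated without reference). There is therefore no ``paper's own proof'' to compare against; you have supplied proof sketches where the authors chose to defer to the literature.

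That said, your sketches are broadly the standard arguments and are essentially correct. A few remarks. For (i), the converse direction is usually phrased by showing that the coherent objects form a \emph{pretopos} (not merely a coherent category), and that $\E$ is recovered as sheaves for the coherent topology on this pretopos; your ``reading off a theory from $\Coh(\E)$'' amounts to the same thing once one invokes the equivalence between coherent theories and small coherent categories. For (ii), a slicker route than your finitary-presentation argument is to note that coherent objects in a coherent topos are precisely the finitely presentable ones (this is part of \cite[Theorem D3.3.7]{johnstone_sketches_2002}), and any locally presentable category has only a set of finitely presentable objects up to isomorphism. For (iii), your Henkin-style construction of a prime coherent theory is exactly how the coherent completeness theorem is proved in, e.g., \cite[Proposition D1.5.10]{johnstone_sketches_2002}; the passage from completeness to Deligne's theorem via the classifying topos is then immediate, as you say. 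The only point I would flag as genuinely delicate in your outline is the claim in (ii) that the equivalence-relation data stays within the coherent objects: you need stability of the generators, not just compactness, and you should make explicit that quotients by coherent equivalence relations on coherent objects remain coherent.
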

Generalising the above scenario, we will say that a topos $\E$ \emph{has enough $\F$-points} to mean that the class of geometric morphisms $\{\F \to \E\}$ is jointly surjective.
\section{Existentially closed models}
\label{sec:existentially-closed-models}
The motivating objects for this paper are the existentially closed models (in $\Set$) of a coherent theory. In this section, we recall the basics of these objects, to inspire and motivate definitions and results later on. As such, we only consider structures and models in $\Set$ for this section.
\begin{definition}
\thlabel{homomorphism-immersion}
Recall that a function $f: M \to N$ between two structures in the same language is called a \emph{homomorphism} if it preserves satisfaction of coherent formulas. That is, for all $a \in M$ and all coherent $\phi(x)$ we have that
\[
M \models \phi(a) \implies N \models \phi(f(a)).
\]
If $f$ also reflects satisfaction of coherent formulas then we call it an \emph{immersion}. That is, $f$ is an immersion if for all $a \in M$ and all coherent $\phi(x)$ we have that
\[
M \models \phi(a) \quad \Longleftrightarrow \quad N \models \phi(f(a)).
\]
\end{definition}
By induction on the structure of formulas, one easily sees that $f$ is a homomorphism if and only if it preserves satisfaction of all geometric formulas, if and only if it preserves all relations, constants and function symbols. Similarly, a homomorphism is an immersion if and only if it reflects satisfaction of all geometric formulas.
\begin{remark}
\thlabel{immersion-pullback}
Equivalently, a function $f: M \to N$ defines a homomorphism if and only if, for any geometric formula $\phi$, there is a factorisation
\[
\begin{tikzcd}
    \phi(M) \ar[dashed]{r} \ar[hook]{d} & \phi(N) \ar[hook]{d} \\
    M \ar{r}{f} & N,
\end{tikzcd}
\]
and $f$ is an immersion if and only if this square is also a pullback of sets (where the map $f: M \to N$ is interpreted in the appropriate sense, i.e.\ a product of the arrows on the respective sorts).
\end{remark}
\begin{remark}
 An immersion $f: M \to N$ is always injective since
\[
N \models f(a) = f(b) \implies M \models a =b.
\]
In fact, by the same reasoning, every immersion is an embedding (i.e.\ a map between structures that preserves and reflects satisfaction of all \emph{atomic} formulas).
\end{remark}
\begin{definition}
\thlabel{existentially-closed-model}
Let $T$ be a coherent theory and let $M \models T$.
\begin{enumerate}[label=(\roman*)]
\item We call $M$ \emph{existentially closed} (or \emph{e.c.}\ for short) if every homomorphism $f: M \to N$ with $N \models T$ is an immersion.
\item We call $M$ \emph{strongly existentially closed} (or \emph{s.e.c.}\ for short) if, for every coherent formula $\phi(x)$ and every $a \in M$ such that $M \not \models \phi(a)$, there is a coherent formula $\psi(x)$ such that $T$ proves the formula $\neg \exists x(\phi(x) \wedge \psi(x))$ (or equivalently, the sequent $\phi(x) \land \psi(x) \vdash_x \bot$) and $M \models \psi(a)$.
\end{enumerate}
Note that the definition of e.c.\ and s.e.c.\ depends on the theory $T$.  This will often be omitted when the context is obvious.
\end{definition}
It turns out that these definitions, for models of coherent theories in $\Set$, are equivalent. Indeed, both characterisations are often used for defining e.c.\ models in positive logic (also known as coherent logic). However, they both admit generalisations to models of geometric theories in arbitrary toposes, where the equivalence breaks down (see \thref{ec-sec-examples}).
\begin{fact}[{\cite[Lemme 14]{ben-yaacov_fondements_2007}}]
\thlabel{classical-ec-is-sec}
A model of a coherent theory $T$ in $\Set$ is e.c.\ if and only if it is s.e.c.
\end{fact}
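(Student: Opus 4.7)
The plan is to prove the two directions separately. The forward direction (s.e.c.\ implies e.c.) is an almost immediate diagram chase, while the reverse direction (e.c.\ implies s.e.c.) requires a compactness argument, which I expect to be the main obstacle.

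For the easy direction, suppose $M$ is s.e.c.\ and let $f: M \to N$ be a homomorphism with $N \models T$. I must show $f$ is an immersion, so take $a \in M$ and a coherent $\phi(x)$ with $N \models \phi(f(a))$, and argue that $M \models \phi(a)$. If not, s.e.c.\ provides a coherent $\psi(x)$ with $M \models \psi(a)$ and $T \vdash \phi(x) \land \psi(x) \vdash_x \bot$. Since $f$ is a homomorphism, $N \models \psi(f(a))$, so $N \models \phi(f(a)) \land \psi(f(a))$, contradicting $N \models T$.

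For the harder direction, suppose $M$ is e.c.\ and, towards a contrapositive, fix $a \in M$ and a coherent $\phi(x)$ with $M \not\models \phi(a)$ for which no s.e.c.-witness $\psi$ exists. Expand the language of $T$ by a constant $\underline{m}$ for each $m \in M$, and form the positive diagram $D^+(M) = \{\chi(\underline{m_1},\dots,\underline{m_k}) : \chi \text{ coherent}, \ M \models \chi(m_1,\dots,m_k)\}$. The key claim is that $T \cup D^+(M) \cup \{\phi(\underline{a})\}$ is consistent in coherent logic. Granted this, coherent completeness (the case in $\Set$ where compactness is available) yields a model $N \models T$ with interpretations $\underline{m}^N$; the map $f: M \to N$, $m \mapsto \underline{m}^N$, is a homomorphism by construction and satisfies $N \models \phi(f(a))$. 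But then e.c.-ness of $M$ forces $M \models \phi(a)$, a contradiction.

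It remains to verify the claim. If $T \cup D^+(M) \cup \{\phi(\underline{a})\}$ were inconsistent, then by compactness for coherent logic there exist finitely many $\chi_i(\underline{m_i}) \in D^+(M)$ such that $T$ proves a sequent expressing $\chi_1 \land \dots \land \chi_n \land \phi$ to be absurd. Gathering the $m_i$ and $a$ into a single tuple (with $x$ for $a$ and $y$ for the remaining parameters), and setting $\psi'(x,y) := \chi_1 \land \dots \land \chi_n$, we obtain $T \vdash \psi'(x,y) \land \phi(x) \vdash_{x,y} \bot$. Existentially quantifying $y$ out (valid since $y$ does not occur in $\phi$), we get $T \vdash (\exists y\, \psi'(x,y)) \land \phi(x) \vdash_x \bot$, while $M \models \exists y\, \psi'(a, y)$ by construction. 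Thus $\psi(x) := \exists y\, \psi'(x,y)$ is the required s.e.c.-witness, contradicting our assumption. The crux, as flagged, is the availability of compactness (equivalently, completeness) for coherent logic over $\Set$, which underwrites both the existence of $N$ and the reduction of the inconsistent theory to a finite sub-conjunction.
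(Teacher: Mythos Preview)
The paper does not supply its own proof of this statement: it is recorded as a \emph{Fact} with a citation to \cite[Lemme 14]{ben-yaacov_fondements_2007}, so there is nothing to compare against directly.

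That said, your argument is correct and is essentially the standard one found in the cited reference. The s.e.c.\ $\Rightarrow$ e.c.\ direction is the expected diagram chase. For e.c.\ $\Rightarrow$ s.e.c., your use of the positive diagram and compactness is exactly the usual route: models of $T \cup D^+(M)$ correspond to homomorphisms $M \to N$ with $N \models T$, so consistency of $T \cup D^+(M) \cup \{\top \vdash \phi(\underline{a})\}$ yields a homomorphism violating immersion, while inconsistency (via first-order compactness, since coherent formulas are first-order) produces the witness $\psi(x) = \exists y\, \psi'(x,y)$. One small point of phrasing: what you invoke is really the compactness theorem for first-order logic together with the completeness theorem for coherent logic (to pass from ``no model'' to ``$T$ proves $\psi \land \phi \vdash_x \bot$''); both are available, but it is worth being explicit that the latter is needed for the syntactic conclusion the definition of s.e.c.\ demands.
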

\begin{remark}
\thlabel{ec-vs-pec}
Some authors reserve the name ``existentially closed'' for the situation where every relation symbol (including equality) has a negation modulo the theory, or equivalently, when working with respect to embeddings rather than homomorphisms. The term ``positive (existentially) closed'' is then used for what we call ``existentially closed''.
\end{remark}
\begin{example}
\thlabel{classical-ec-examples}
We give some examples of classes of e.c.\ models.
\begin{enumerate}[label=(\roman*)]
\item Let our language consist of infinitely many constant symbols, say $\{c_n\}_{n \in \N}$, and let $T$ declare all these symbols to be distinct. Then the only e.c.\ model is the model whose underlying set consists only of the constants (which are all distinct), e.g.\ $\N$ with $c_n$ interpreted as $n$ for all $n \in \N$. This is because if a model $M$ contains an element $a \in M$ that is not the interpretation of a constant then we can define a homomorphism $f: M \to M$ that sends $a$ to (the interpretation of) $c_0$ and fixes everything else. Then $f$ is not an immersion, as $M \models f(a) = f(c_0)$ but $M \not \models a = c_0$.
\item In the theory of fields, the e.c.\ models are the algebraically closed fields \cite[Section 3.3.4]{tent_course_2012}.
\item Let $T$ be the theory of graphs, with a symbol for inequality $\neq$ and a binary symbols for both the edge relation and its complement (so that homomorphisms between models of $T$ are graph embeddings, i.e.\ injective and they preserve and reflect the edge relation). Then the existentially closed models of $T$ are exactly those (infinite) graphs $G$ such that, for any two finite disjoint sets of vertices $A, B \subseteq G$, there is a vertex $v \in G \setminus (A \cup B)$ with an edge to every vertex in $A$ and no edge to any vertex in $B$. In particular, the unique countable such graph (up to isomorphism) is the \emph{random graph} or \emph{Rado graph} \cite[Exercise 3.3.1]{tent_course_2012}.
\end{enumerate}
\end{example}
\begin{remark}
\thlabel{geometric-axiomatisation-always-possible}
It is well-known that the class of e.c.\ models can not always be axiomatised in finitary logic. Not even in full first-order logic (or equivalently: in coherent logic and allowing an expansion of the language, such as Morleyisation). See for example \thref{classical-ec-examples}(i), where one quickly runs into issues with compactness. However, using the s.e.c.\ characterisation together with \thref{classical-ec-is-sec}, we see that the class of e.c.\ models of a coherent theory $T$ is always geometrically axiomatisable, by adding to $T$ a sequent
\[
\top \vdash_x \phi(x) \vee \bigvee \{ \psi(x) \text{ a coherent formula} \mid T \text{ proves } \phi(x) \wedge \psi(x) \vdash_x \bot \},
\]
for every coherent formula $\phi(x)$.
\end{remark}
Finally, there is the following important fact. It guarantees not only that e.c.\ models exist, but also that when we care about eventual behaviour of models (as model theorists often do), then we only need to look at the e.c.\ models of a theory.
\begin{fact}[{\cite[Theoreme 1]{ben-yaacov_fondements_2007}}]
\thlabel{every-model-continues-to-ec-model}
Every model of a coherent theory $T$ admits a homomorphism into an e.c.\ model of $T$.
\end{fact}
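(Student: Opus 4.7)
The plan is to construct, for a given $M \models T$, an e.c.\ model $M^*$ together with a homomorphism $M \to M^*$ via a transfinite chain. Setting $\kappa = |M| + |L| + \aleph_0$, where $L$ is the language of $T$, I would build a chain $(M_\alpha)_{\alpha < \kappa^+}$ of models of $T$ with compatible homomorphisms $M_\alpha \to M_\beta$ for $\alpha \leq \beta$, starting with $M_0 = M$. At limit stages, I take filtered colimits; since coherent (indeed, geometric) formulas are preserved by filtered colimits of structures, each $M_\lambda$ is again a model of $T$, and so is $M^* = \colim_{\alpha < \kappa^+} M_\alpha$.

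At each successor stage $\alpha+1$, the plan is to address one \emph{defect} of $M_\alpha$: a triple $(\phi, a, f)$ where $\phi(x)$ is coherent, $a \in M_\alpha$, $f \colon M_\alpha \to N$ is a homomorphism to some $N \models T$, and $N \models \phi(f(a))$ yet $M_\alpha \not\models \phi(a)$. Set $M_{\alpha+1} = N$, possibly passing to a substructure of size $\leq \kappa$ by a Löwenheim--Skolem-style argument so as to keep cardinalities controlled throughout. Since the set of defects at any stage then has cardinality $\leq \kappa$, a bookkeeping argument along the length $\kappa^+$ of the chain arranges that every defect arising at any stage is eventually addressed. The key observation is that once a defect $(\phi, a)$ is addressed at stage $\beta$, the image of $a$ satisfies $\phi$ in every later $M_\gamma$, because homomorphisms preserve coherent formulas.

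The verification that $M^*$ is e.c.\ then runs as follows: suppose $g \colon M^* \to N'$ is a homomorphism witnessing some defect $(\phi, a^*)$, i.e.\ $N' \models \phi(g(a^*))$ while $M^* \not\models \phi(a^*)$. By cofinality of $\kappa^+$ there is some $\alpha$ with $a^* = h_\alpha(a)$ for $a \in M_\alpha$, where $h_\alpha \colon M_\alpha \to M^*$ is the canonical chain map. Because $h_\alpha$ preserves coherent formulas, $M_\alpha \not\models \phi(a)$, and the composite $g \circ h_\alpha$ exhibits $(\phi, a)$ as a genuine defect of $M_\alpha$. By the bookkeeping, this defect is addressed at some later stage $\beta$, after which $\phi$ is satisfied at the image of $a$ in every $M_\gamma$ for $\gamma > \beta$; passing to the colimit yields $M^* \models \phi(a^*)$, a contradiction.

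The main obstacle I anticipate is the bookkeeping itself, together with the cardinality control: one must pick a suitable small witness $N$ for each defect (a Löwenheim--Skolem-type argument adapted to coherent logic, which is more delicate than in the full first-order case, as there is no classical negation to appeal to), and set up an enumeration that addresses not just the defects of $M_0$, but all defects that appear at every stage of the chain. Everything else --- the colimit being a model of $T$, preservation of coherent formulas along the chain, and the final contradiction --- follows routinely from general facts about geometric logic.
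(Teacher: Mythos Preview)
Your approach is correct and is a recognisable variant of the standard construction; note, though, that the paper records this statement as a cited fact and does not prove it separately, only proving the generalisation \thref{every-geometric-morphism-continues-to-ec}, whose specialisation to $\F = \Set$ yields a proof here. That argument is organised differently from yours: instead of a single chain of length $\kappa^+$ with bookkeeping and L\"owenheim--Skolem control, it runs a \emph{two-level} chain---an outer chain $M = M^0 \to M^1 \to \cdots$ of length $\omega$, where each $M^{i+1}$ is itself the colimit of an inner chain that sweeps through \emph{all} pairs $(\phi,a)$ with $a$ a finite tuple in $M^i$, passing to a witnessing extension whenever one exists. No cardinality bound on the intermediate models is imposed and no bookkeeping device is required: any finite tuple in the final colimit already lies in some $M^i$, and the relevant pair is guaranteed to have been treated while building $M^{i+1}$. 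Both obstacles you anticipate---the L\"owenheim--Skolem step in coherent logic, and the bookkeeping (which is indeed delicate, since a defect recorded at stage $\alpha$ as a triple $(\phi,a,f)$ carries a witness $f$ with domain $M_\alpha$, so one must manufacture a fresh witness at whatever later stage the pair $(\phi,a)$ is actually handled, as your verification implicitly does)---are thus sidestepped by the paper's organisation.
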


\section{Existentially closed geometric morphisms}
\label{sec:existentially-closed-geometric-morphisms}
We now generalise the notion of being existentially closed from models to geometric morphisms. Any topos $\E$ can be seen as the classifying topos of some geometric theory $T$, so geometric morphisms $\F \to \E$ correspond to models of $T$ in $\F$, while natural transformations between such geometric morphisms correspond to homomorphisms between the corresponding models. 

As interpretations of geometric formulas correspond to subobjects (of the generic model) in $\E$, and following \thref{immersion-pullback}, the following seems a straightforward generalisation of the definitions in Section \ref{sec:existentially-closed-models}.
\begin{definition}
\thlabel{ec-geometric-morphisms}
Let $p, q: \F \to \E$ be geometric morphisms. A natural transformation
\[\begin{tikzcd}
\F && \E,
    \arrow[""{name=0, anchor=center, inner sep=0}, "p", curve={height=-18pt}, from=1-1, to=1-3]
    \arrow[""{name=1, anchor=center, inner sep=0}, "q"', curve={height=18pt}, from=1-1, to=1-3]
    \arrow["\eta", shorten <=5pt, shorten >=5pt, Rightarrow, from=0, to=1]
\end{tikzcd}
\]
is called an \emph{immersion} if for any monomorphism $a: A \hookrightarrow X$ in $\E$, the induced commuting square
\[
\begin{tikzcd}
    p^*(A) \ar[hook]{d}[']{p^*(a)} \ar{r}{\eta_A} & q^*(A) \ar[hook]{d}{q^*(a)} \\
    p^*(X) \ar{r}{\eta_X} & q^*(X)
\end{tikzcd}
\]
is a pullback.

We say that a geometric morphism $p: \F \to \E$ is \emph{existentially closed} (or \emph{e.c.}\ for short) if every natural transformation to any geometric morphism $q: \F \to \E$ is an immersion.
\end{definition}
We gave an explanation for the reasoning behind this condition, at least when the object $X$ is the interpretation of a geometric formula in the generic model of the theory that is classified by $\E$. It turns out that when this condition is met for a generating set of objects in $\E$ (such as the set of interpretations of formulas in the generic model), then it will hold for all objects in $\E$. This makes being e.c.\ presentation independent, and so Morita-equivalent theories have the same categories of e.c.\ models. In particular, we see that our terminology is justified, because in the case of $\F = \Set$ and coherent $T$ it specialises back to the existing terminology (\thref{ec-as-model-is-ec-as-point}).
\begin{proposition}
\thlabel{ec-on-generating-is-enough}
Let $\eta: p \Rightarrow q$ be a natural transformation of geometric morphisms $p, q: \F \to \E$, then $\eta$ is an immersion if the condition in \thref{ec-geometric-morphisms} is met for all monomorphisms whose codomains are contained in some generating set of objects for $\E$. In particular, it suffices to check existential closedness for a geometric morphism on a generating set.
\end{proposition}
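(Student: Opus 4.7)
The plan is to bootstrap from the hypothesis on generators to arbitrary monomorphisms, exploiting that $p^*$ and $q^*$ preserve finite limits and all colimits, together with the standard fact that in a Grothendieck topos pullback along an epimorphism is injective on subobjects.

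First I would upgrade the hypothesis from a single generator $G$ to an arbitrary coproduct $Y = \coprod_i G_i$ of objects from the generating set. Any monomorphism $M \hookrightarrow Y$ decomposes as $\coprod_i M_i$ with $M_i \hookrightarrow G_i$, since coproducts in a topos are disjoint and universal. The naturality square for $M \hookrightarrow Y$ is then the coproduct of the naturality squares for the $M_i \hookrightarrow G_i$, using that $p^*$ and $q^*$ preserve coproducts and that $\eta$ is natural; since in a topos a coproduct of pullback squares is again a pullback square, the pullback property passes from the $M_i$ to $M$.

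Given a general mono $a \colon A \hookrightarrow X$ in $\E$, I would then choose an epimorphism $e \colon Y \twoheadrightarrow X$ with $Y$ a coproduct of generators, and form $B = A \times_X Y \hookrightarrow Y$, to which the previous paragraph applies. Let $P = p^*(X) \times_{q^*(X)} q^*(A)$ be the pullback to be compared with $p^*(A)$. Since $q^*(A) \hookrightarrow q^*(X)$ is mono, so is $P \hookrightarrow p^*(X)$, hence both $p^*(A)$ and $P$ are subobjects of $p^*(X)$ and it suffices to show they agree as such. Pulling back along the epimorphism $p^*(e) \colon p^*(Y) \twoheadrightarrow p^*(X)$, preservation of pullbacks by $p^*$ gives $p^*(A) \times_{p^*(X)} p^*(Y) = p^*(B)$; meanwhile, pasting pullbacks and using preservation by $q^*$ yields $P \times_{p^*(X)} p^*(Y) \cong p^*(Y) \times_{q^*(Y)} q^*(B)$, which equals $p^*(B)$ by the previous paragraph applied to $B \hookrightarrow Y$. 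Because pullback along an epimorphism is injective on subobjects in a topos, $p^*(A) = P$.

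The only genuinely non-formal ingredient is this last descent fact; the rest is bookkeeping with pullbacks and coproducts under the preservation properties of $p^*$ and $q^*$, so no serious obstacle is expected.
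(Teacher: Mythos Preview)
Your argument is correct. Both steps check out: the coproduct-of-pullbacks fact follows from disjointness and universality of coproducts (the cross-terms $B_i \times_{\coprod D_k} C_j$ vanish for $i \neq j$), and the descent step is the standard fact that $e^{-1} \colon \Sub(X) \to \Sub(Y)$ is injective for an epimorphism $e$. The pasting computation $P \times_{p^*(X)} p^*(Y) \cong p^*(Y) \times_{q^*(Y)} q^*(B)$ indeed goes through once you use naturality of $\eta$ to rewrite $\eta_X \circ p^*(e) = q^*(e) \circ \eta_Y$.

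The paper takes a different, somewhat more direct route. Rather than covering $X$ by a coproduct of generators and descending along the resulting epimorphism, it writes $X \cong \colim_i X_i$ as an arbitrary colimit of generators, sets $A_i = A \times_X X_i$, pastes the hypothesis square for $A_i \hookrightarrow X_i$ with the $q^*$-preserved pullback $q^*(A_i) \to q^*(A)$ over $q^*(X_i) \to q^*(X)$ to obtain $p^*(A_i) \cong p^*(X_i) \times_{q^*(X)} q^*(A)$, and then simply takes the colimit over $i$, invoking universality of colimits in $\F$ once. Your approach trades that single appeal to ``colimits are stable under pullback'' for two more elementary ingredients (disjoint universal coproducts plus subobject descent along an epimorphism), which makes the exactness hypotheses more explicit and would adapt to settings where one has only coproducts and regular-epi descent rather than all colimits universal; the paper's version is shorter and avoids the two-stage bootstrap.
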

\begin{proof}
This follows from the fact that, in a Grothendieck topos $\E$, colimits are stable under pullback (see \cite{giraud_exactness} or \cite[Appendix]{maclane_sheaves_1994}), i.e.\ 
\[
    X \times_Y \colim_{i \in I} Z_i \cong \colim_{i \in I} (X \times_Y Z_i),
\]
and that every object in $\E$ is a colimit of objects in the generating set $\X$ of $\E$.

Let $X \cong \colim_{i \in I} X_i$, where $X_i \in \X$. Then for any monomorphism $A \leq X$ in $\E$, there is a chain of isomorphisms
\[
    A \cong A \times_X X \cong A \times_X \colim_{i \in I} X_i \cong \colim_{i \in I} A_i,
\]
where $A_i$ denotes the pullback $A \times_{X} X_i$.

By composing the pullback squares
\[
\begin{tikzcd}
    p^*(A_i) \ar{r} \ar[hook]{d} & q^*(A_i) \ar{r} \ar[hook]{d} & q^*(A) \ar[hook]{d} \\
    p^*(X_i) \ar{r} & q^*(X_i) \ar{r} & q^*(X)
\end{tikzcd}
\]
(the left-hand square is a pullback by hypothesis, the right-hand square is a pullback since $q^*$ preserves pullbacks), we have that $p^*(A_i) \cong p^*(X_i) \times_{q^*(X)} q^*(A)$, and hence we conclude that
\begin{align*}
    p^*(X) \times_{q^*(X)} q^*(A) &\cong (\colim_{i \in I} p^*(X_i)) \times_{q^*(X)} q^*(A), \\
    &\cong \colim_{i \in I}(p^*(X_i) \times_{q^*(X)} q^*(A)), \\
    &\cong \colim_{i \in I} p^*(A_i) \cong p^*(A)
\end{align*}
as desired (using the fact that $p^*$ commutes with colimits).
\end{proof}
\begin{remark}
The proof given in \thref{ec-on-generating-is-enough} is valid if we replace `monomorphism' in \thref{ec-geometric-morphisms} with any other property of an arrow that is stable under pullback in a topos, e.g.\ epimorphism.
\end{remark}
\begin{corollary}
\thlabel{ec-as-model-is-ec-as-point}
A model $M$ of a coherent theory $T$ in $\Set$ is e.c.\ according to \thref{existentially-closed-model} if and only if the corresponding geometric morphism $p: \Set \to \Set[T]$ is e.c.\ according to \thref{ec-geometric-morphisms}.
\end{corollary}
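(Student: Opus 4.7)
The plan is to apply \thref{ec-on-generating-is-enough} with the generating set $\X = \{ \phi(G_T) \mid \phi(x) \text{ coherent}\}$ of \thref{coherent-topos-facts}(i). This reduces existential closedness of $p$ to checking the pullback condition of \thref{ec-geometric-morphisms} only on the monomorphisms with codomain in $\X$, which, by the description of subobjects of $\phi(G_T)$ recalled in the preliminaries, are the inclusions $\psi(G_T) \hookrightarrow \phi(G_T)$ for geometric $\psi$ with $T$-provable sequent $\psi(x) \vdash_x \phi(x)$.

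Next, I would translate through the classifying topos correspondence \eqref{classifying-topos-equation}. A natural transformation $\eta : p \Rightarrow q$ corresponds to a homomorphism $f : M \to N$ of $T$-models in $\Set$, where $M = p^*(G_T)$ and $N = q^*(G_T)$, and the inverse image functors $p^*$ and $q^*$ send the subobject $\phi(G_T)$ (respectively $\psi(G_T)$) to the interpretation $\phi(M)$ (respectively $\phi(N)$, and likewise for $\psi$). Under this dictionary, the pullback condition on $\psi(G_T) \hookrightarrow \phi(G_T)$ becomes the assertion that
\[
\begin{tikzcd}
\psi(M) \ar{r} \ar[hook]{d} & \psi(N) \ar[hook]{d} \\
\phi(M) \ar{r} & \phi(N)
\end{tikzcd}
\]
is a pullback in $\Set$.

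To close the argument, I would use pullback pasting to show that demanding these squares for all pairs $\psi \vdash_x \phi$ is equivalent to demanding the case $\phi = \top$ alone (whose interpretation is the relevant product of sorts, itself of the form $\top(G_T) \in \X$). That case is, by \thref{immersion-pullback}, exactly the condition that $f$ is an immersion; the general case follows by pasting the $\psi$-square and the $\phi$-square with common bottom row the sort product. Combining these translations, $p$ is e.c.\ if and only if every homomorphism $f : M \to N$ into a $T$-model is an immersion, which is the definition of $M$ being e.c.\ as a model. The only work is keeping the bookkeeping between the pullback conditions on generating subobjects and the model-theoretic immersion condition straight; no substantive obstacle arises.
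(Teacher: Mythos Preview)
Your proposal is correct and follows essentially the same route as the paper: both arguments invoke \thref{ec-on-generating-is-enough} on the generating set of interpretations $\phi(G_T)$, translate the naturality squares through the classifying topos correspondence, and use the pullback lemma together with \thref{immersion-pullback} to reduce to the case where the codomain is a product of sorts. The only cosmetic difference is that the paper phrases the subobjects as $\phi \wedge \psi(G_T) \leq \phi(G_T)$ rather than $\psi(G_T) \leq \phi(G_T)$ with $\psi \vdash_x \phi$, which is of course the same thing.
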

\begin{proof}
By definition $M$ is e.c.\ if and only if every homomorphism $f: M \to N$ into a model of $T$ is an immersion. Letting $q: \Set \to \Set[T]$ be the geometric morphism corresponding to $N$, we can view $f$ as a natural transformation $p \Rightarrow q$. Then by \thref{immersion-pullback} we see that $f$ is an immersion in the classical sense (\thref{homomorphism-immersion}) if it is an immersion as a natural transformation in the sense of \thref{ec-geometric-morphisms}. The converse follows from \thref{ec-on-generating-is-enough}, because the objects of the form $\phi(G_T)$ generate $\Set[T]$ and by \thref{immersion-pullback} (and the pullback lemma) the naturality squares involving their subobjects, which are of the form $\phi \land \psi(G_T) \leq \phi(G_T)$, are pullbacks.
\end{proof}
We have already seen examples of e.c.\ models, and thus e.c.\ geometric morphisms from $\Set$, in \thref{classical-ec-examples}. We defer further examples to \thref{ec-sec-examples}, once we have also generalised the notion of being strongly e.c.\ to geometric morphisms, so that we can compare the two.
\begin{proposition}
\thlabel{comp-with-surj-is-ec-implies-ec}
Given a geometric morphism $p: \F \to \E$ and a jointly conservative family of geometric morphisms $\{q_i: \G \to \F\}_{i \in I}$, if the composite $pq_i$ is e.c.\ for each $i \in I$, then so too is $p$.
\end{proposition}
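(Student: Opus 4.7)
The plan is to take an arbitrary natural transformation $\eta : p \Rightarrow r$ into another geometric morphism $r : \F \to \E$ and show that its naturality square at any mono $a : A \hookrightarrow X$ of $\E$ is a pullback in $\F$. By \thref{ec-on-generating-is-enough}, being a pullback on a generating set suffices, but here I will not need that refinement: I will go directly for all monos.

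First, I would whisker $\eta$ with each $q_i$ to obtain a natural transformation $\eta \ast q_i : p q_i \Rightarrow r q_i$ of geometric morphisms $\G \to \E$. Because $p q_i$ is assumed e.c., $\eta \ast q_i$ is an immersion, so for every monomorphism $a : A \hookrightarrow X$ in $\E$ the square
\[
\begin{tikzcd}
q_i^* p^*(A) \ar[hook]{d} \ar{r} & q_i^* r^*(A) \ar[hook]{d} \\
q_i^* p^*(X) \ar{r} & q_i^* r^*(X)
\end{tikzcd}
\]
is a pullback in $\G$. This square is exactly $q_i^*$ applied to the naturality square of $\eta$ at $a$, since inverse image functors preserve the relevant compositions.

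Now form the pullback $P := p^*(X) \times_{r^*(X)} r^*(A)$ in $\F$ together with the canonical comparison map $c : p^*(A) \to P$ induced by the naturality square of $\eta$. The naturality square is a pullback if and only if $c$ is an isomorphism. Apply $q_i^*$: since $q_i^*$ preserves pullbacks, $q_i^*(P)$ is the pullback of $q_i^* p^*(X) \to q_i^* r^*(X) \leftarrow q_i^* r^*(A)$, and $q_i^*(c)$ is the comparison map for the whiskered square, which we just argued is a pullback. Hence $q_i^*(c)$ is an isomorphism for each $i$.

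Finally, the hypothesis that $\{q_i\}_{i \in I}$ is jointly conservative means precisely that the family $\{q_i^*\}_{i \in I}$ jointly reflects isomorphisms. Therefore $c$ itself is an isomorphism, so the naturality square of $\eta$ at $a$ is a pullback, and $\eta$ is an immersion. As $\eta$ was arbitrary, $p$ is e.c. The only delicate point is verifying that ``joint conservativity'' is strong enough to reflect the isomorphy of a single comparison map from its images, which is immediate from the usual definition of conservativity as reflecting isomorphisms applied pointwise.
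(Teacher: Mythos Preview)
Your proof is correct and follows essentially the same approach as the paper's: whisker $\eta$ by each $q_i$, use that $pq_i$ is e.c.\ to conclude the resulting square is a pullback in $\G$, and then invoke joint conservativity to deduce the original square is a pullback in $\F$. The only cosmetic difference is that the paper phrases the last step as an equality of subobjects $p^*(A) = \eta_X^{-1}(r^*(A))$ (reflected by the conservative family), whereas you phrase it as the comparison map $c$ to the pullback being an isomorphism; these are equivalent formulations of the same step.
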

\begin{proof}
Let $\eta: p \Rightarrow r$ be a natural transformation into any geometric morphism $r: \F \to \E$. Then we obtain a natural transformation $q_i^*(\eta): p q_i \Rightarrow r q_i$, for each $i \in I$. As $pq_i$ is e.c., for any subobject $A \leq X$ in $\E$, we have that $q_i^* p^*(A) = (q^*(\eta_X))^{-1}(q_i^* r^*(A)) = q_i^*(\eta_X^{-1}(r^*(A)))$.  As $\{q_i: \G \to \F\}_{i \in I}$ is a jointly conservative family, we conclude that $p^*(A) = \eta_X^{-1}(r^*(A))$, and therefore that $p$ is e.c.
\end{proof}
\subsection{The subtopos of existentially closed points}
Given a geometric morphism $p: \F \to \E$ we write its (surjection, embedding)-factorisation as $\F \twoheadrightarrow p(\F) \hookrightarrow \E$. In particular, $p(\F)$ is a subtopos of $\E$, called the \emph{image of $\F$ under $p$}.
\begin{definition}
\thlabel{ec-subtopos}
Given toposes $\F$ and $\E$, let $\E_{\F\text{-}\ec} \hookrightarrow \E$ denote the subtopos
\[
\bigcup \{ p(\F) \mid p: \F \to \E \text{ an e.c.\ geometric morphism} \},
\]
where the union is taken in the (complete) lattice of subtoposes of $\E$.
\end{definition}
If we view $\E$ as the classifying topos $\Set[T]$ on some theory $T$ then $\E_{\F\text{-}\ec}$ is the classifying topos of the common theory of the e.c.\ models of $T$ in $\F$, that is $\E_{\F\text{-}\ec} \simeq \Set[T^{\F\text{-}\ec}]$ and
\[
T^{\ec[\F]} = \{\sigma \text{ a geometric sequent} \mid \sigma \text{ is valid in every e.c.\ model of $T$ in $\F$}\}.
\]
\begin{proposition}
The inclusion $\E_{\F\text{-}\ec} \hookrightarrow \E$ is itself e.c.
\end{proposition}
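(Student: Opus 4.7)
The plan is to deduce the statement from \thref{comp-with-surj-is-ec-implies-ec}, by exhibiting a jointly conservative family of geometric morphisms into $\E_{\F\text{-}\ec}$ whose composites with the inclusion $i: \E_{\F\text{-}\ec} \hookrightarrow \E$ are all e.c. The natural candidates are the e.c.\ morphisms $\F \to \E$ themselves, corestricted through $\E_{\F\text{-}\ec}$.

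More precisely, for every e.c.\ morphism $p: \F \to \E$, the (surjection, embedding)-factorisation $\F \twoheadrightarrow p(\F) \hookrightarrow \E$ factors through $\E_{\F\text{-}\ec}$, since $p(\F) \leq \E_{\F\text{-}\ec}$ by the very definition of $\E_{\F\text{-}\ec}$. This yields a lift $\tilde{p}: \F \to \E_{\F\text{-}\ec}$ with $i \circ \tilde{p} = p$, whose image, regarded as a subtopos of $\E_{\F\text{-}\ec}$, is again $p(\F)$.

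The key step is to show that the family $\{\tilde{p}\}_{p \text{ e.c.}}$ is jointly surjective, equivalently that the family of inverse images $\{\tilde{p}^*\}$ is jointly conservative. For this I would invoke the standard fact that a family of geometric morphisms with common codomain is jointly surjective iff the join of their images equals that codomain. Here, this reduces to
\[
    \bigvee_{p \text{ e.c.}} \tilde{p}(\F) \;=\; \bigvee_{p \text{ e.c.}} p(\F) \;=\; \E_{\F\text{-}\ec},
\]
which is literally the definition of $\E_{\F\text{-}\ec}$. With joint conservativity in hand, \thref{comp-with-surj-is-ec-implies-ec}, applied with the family $\{\tilde{p}: \F \to \E_{\F\text{-}\ec}\}$ in the role of $\{q_i\}$ and with $i$ playing the role of $p$, immediately delivers that $i$ is e.c., since each composite $i \circ \tilde{p} = p$ is e.c.\ by assumption. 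The only non-routine ingredient is the joint-surjectivity/join-of-images correspondence; everything else is formal.
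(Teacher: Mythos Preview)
Your proposal is correct and follows essentially the same approach as the paper: both apply \thref{comp-with-surj-is-ec-implies-ec} to the family of lifts $\tilde{p}: \F \to \E_{\F\text{-}\ec}$ of the e.c.\ morphisms $p: \F \to \E$, noting that this family is jointly conservative by construction of $\E_{\F\text{-}\ec}$. The paper simply asserts the joint conservativity ``by construction'', whereas you spell out the join-of-images argument, but the idea is identical.
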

\begin{proof}
The collection of geometric morphisms $p: \F \to \E_{\ec[\F]}$, such that the composition $\F \xrightarrow{p} \E_{\ec[\F]} \hookrightarrow \E$ is e.c., is by construction jointly conservative. The statement thus follows from \thref{comp-with-surj-is-ec-implies-ec}.
\end{proof}
\subsection{Entwined toposes}
Different geometric theories can have the same category of e.c.\ models. For example, following \thref{classical-ec-examples}(ii), the theories of fields and algebraically closed fields have the same e.c.\ models in $\Set$, namely the algebraically closed fields. Since we are interested in theories up to their e.c.\ models, we introduce the notion of entwinement.  As we will see, two theories have equivalent categories of e.c.\ models if and only if they are entwined.
\begin{definition}
\thlabel{entwined}
Fix a topos $\F$. We say that toposes $\E$ and $\E'$ are \emph{$\F$-entwined} if there is a pair of geometric embeddings $i: \E \hookrightarrow \G$ and $i': \E' \hookrightarrow \G$ such that, for every geometric morphism $p: \F \to \E$, there is a geometric morphism $q: \F' \to \E'$ and a natural transformation $\eta: ip \Rightarrow i'q$, as in the diagram
\[
\begin{tikzcd}[row sep=tiny]
	& \E \\
	\F && \G, \\
	& {\E'}
	\arrow["i", curve={height=-6pt}, hook, from=1-2, to=2-3]
	\arrow["{ \eta}", Rightarrow, from=1-2, to=3-2]
	\arrow["p", curve={height=-6pt}, from=2-1, to=1-2]
	\arrow["q"', dashed, curve={height=6pt}, from=2-1, to=3-2]
	\arrow["i'"', curve={height=6pt}, hook, from=3-2, to=2-3]
\end{tikzcd}
\]
and vice versa.
\end{definition}
\begin{remark}
    The definition of entwinement is motivated by the model-theoretic notion of a \emph{model companion} $T'$ of a theory $T$ (see e.g.\ \cite[Definition 3.2.8]{tent_course_2012}). Recall that this is a theory $T'$ such that is \emph{model complete} (all its models are existentially closed with respect to embeddings) and such that every model of $T$ embeds into one of $T'$ and vice versa.  Entwinement is the weaker condition where models merely admit a homomorphism to one another.
\end{remark}
\begin{lemma}
\thlabel{entwinement-is-equivalence-relation}
Being $\F$-entwined, for fixed $\F$, is an equivalence relation on toposes.
\end{lemma}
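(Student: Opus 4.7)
The plan is to verify the three axioms of an equivalence relation for \thref{entwined}.  Reflexivity is immediate: for a topos $\E$, take $\G = \E$, $i = i' = \id_\E$, and for each $p \colon \F \to \E$ set $q = p$ with $\eta = \id_p$.  Symmetry is built into the definition, whose ``and vice versa'' clause is visibly symmetric in $\E$ and $\E'$.

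The substantive case is transitivity.  Suppose $\E$ and $\E'$ are $\F$-entwined via $(\G_1, i_1, i_1')$, and $\E'$ and $\E''$ are $\F$-entwined via $(\G_2, i_2, i_2')$.  To build a common witness for the entwinement of $\E$ and $\E''$, I would form the 2-pushout
\[
\G := \G_1 \sqcup_{\E'} \G_2
\]
in $\Topos$, taken along the embeddings $i_1' \colon \E' \hookrightarrow \G_1$ and $i_2 \colon \E' \hookrightarrow \G_2$.  This produces geometric morphisms $k_1 \colon \G_1 \to \G$ and $k_2 \colon \G_2 \to \G$ with a canonical natural isomorphism $k_1 i_1' \cong k_2 i_2$, from which I extract candidate embeddings $j := k_1 i_1 \colon \E \to \G$ and $j' := k_2 i_2' \colon \E'' \to \G$.

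Given $p \colon \F \to \E$, the first entwinement produces $q_1 \colon \F \to \E'$ and $\eta_1 \colon i_1 p \Rightarrow i_1' q_1$ in $\G_1$; applying the second entwinement to $q_1$ yields $q_2 \colon \F \to \E''$ and $\eta_2 \colon i_2 q_1 \Rightarrow i_2' q_2$ in $\G_2$.  Whiskering $\eta_1$ by $k_1$, applying the pushout isomorphism $k_1 i_1' q_1 \cong k_2 i_2 q_1$, and then whiskering $\eta_2$ by $k_2$ produces the required natural transformation $jp \Rightarrow j' q_2$ in $\G$.  The ``vice versa'' halves of the two original entwinements let us run the same chain of transformations starting instead from a morphism $\F \to \E''$, producing the reverse direction.

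The main obstacle is justifying that the coprojections $k_1$ and $k_2$ of the pushout are themselves geometric embeddings, so that $j$ and $j'$ qualify as embeddings and not merely arbitrary geometric morphisms.  I would expect to establish this by a site-theoretic construction of the pushout, exploiting that the embeddings $i_1'$ and $i_2$ correspond to Lawvere--Tierney topologies on sites presenting $\G_1$ and $\G_2$; a robust back-up strategy is to first embed both $\G_1$ and $\G_2$ into a sufficiently large common ambient topos and realise $\G$ as the join of two subtoposes there.  In either case, all the substantive work of the transitivity step is concentrated in securing this embedding property; the rest of the argument is purely formal whiskering and composition.
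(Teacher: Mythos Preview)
Your proposal is correct and follows essentially the same approach as the paper: reduce to transitivity, form the pushout of $\G_1 \hookleftarrow \E' \hookrightarrow \G_2$, and chain the two given natural transformations through the pushout. The ``main obstacle'' you isolate---that the pushout coprojections are again embeddings---is exactly what the paper handles by citing \cite[Scholium B3.4.4]{johnstone_sketches_2002}, which asserts that a pushout of geometric embeddings in $\Topos$ is again a square of embeddings; this dispatches your concern without needing the site-theoretic workaround you sketch.
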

\begin{proof}
Only transitivity requires an argument. Suppose that $\E$ and $\E'$ are $\F$-entwined, as witnessed by $\E \hookrightarrow \G \hookleftarrow \E'$, and $\E'$ and $\E''$ are $\F$-entwined, as witnessed by $\E' \hookrightarrow \G' \hookleftarrow \E''$. By \cite[Scholium B3.4.4]{johnstone_sketches_2002} we can consider the pushout of $\G \hookleftarrow \E' \hookrightarrow \G'$ to obtain a diagram of inclusions of toposes:
\[
\begin{tikzcd}
\E \arrow[r, hook]                    & \G \arrow[rd, hook]  &      \\
\E' \arrow[rd, hook] \arrow[ru, hook] &                      & \G^* . \\
\E'' \arrow[r, hook]                  & \G' \arrow[ru, hook] &     
\end{tikzcd}
\]
It is then straightforward to see that the cospan $\E \hookrightarrow \G^* \hookleftarrow \E''$ witnesses that $\E$ and $\E''$ are $\F$-entwined.
\end{proof}
The following makes precise how entwinement sees toposes (which can be viewed as geometric theories) up to e.c.\ models (i.e., e.c.\ geometric morphisms into them).

\begin{proposition}
\thlabel{entwinement-characterisations}
Let $\F$, $\E$ and $\E'$ be three toposes. Then $\E$ and $\E'$ are $\F$-entwined if and only if $\E_{\ec[\F]} \simeq \E'_{\ec[\F]}$.
\end{proposition}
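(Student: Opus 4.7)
I address the two implications of the biconditional separately.

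For the converse direction ($\Leftarrow$), suppose $\E_{\ec[\F]} \simeq \E'_{\ec[\F]}$. I would construct a witness $\G$ for entwinement by taking the pushout of the span of embeddings $\E \hookleftarrow \E_{\ec[\F]} \hookrightarrow \E'$, in which the second arrow is the composite of the given equivalence with the inclusion $\E'_{\ec[\F]} \hookrightarrow \E'$. Since pushouts of embeddings along embeddings remain embeddings (\cite[Scholium B3.4.4]{johnstone_sketches_2002}, as already used in the proof of \thref{entwinement-is-equivalence-relation}), this yields the required $i: \E \hookrightarrow \G$ and $i': \E' \hookrightarrow \G$. Given any $p: \F \to \E$, I apply \thref{every-geometric-morphism-continues-to-ec} to obtain $p \Rightarrow p_{\ec}$ with $p_{\ec}$ existentially closed, hence factoring through $\E_{\ec[\F]}$; transporting $p_{\ec}$ along the equivalence and then embedding into $\E'$ produces the desired $q: \F \to \E'$, while the pushout commutation supplies the natural transformation $ip \Rightarrow i'q$. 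The symmetric construction with the inverse equivalence handles morphisms out of $\E'$.

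For the forward direction ($\Rightarrow$), my first step is to show that any topos $\E$ is $\F$-entwined with its own e.c.\ subtopos $\E_{\ec[\F]}$. I take $\G = \E$ with embeddings $\id_\E$ and $\iota: \E_{\ec[\F]} \hookrightarrow \E$: the left-to-right condition is supplied by continuation to e.c.\ (\thref{every-geometric-morphism-continues-to-ec}), since the continuation $p \Rightarrow p_{\ec}$ factors through $\E_{\ec[\F]}$, and the right-to-left condition holds trivially via identity natural transformations. Applying this symmetrically to $\E'$ and invoking transitivity of entwinement (\thref{entwinement-is-equivalence-relation}) together with the hypothesis reduces the task to: given that $\E_{\ec[\F]}$ and $\E'_{\ec[\F]}$ are $\F$-entwined, show that they are equivalent.

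This final step is the main obstacle. Entwinement provides only unidirectional natural transformations at the level of $\F$-points, which is a priori weaker than an equivalence of classifying toposes. My plan is to exploit the fact that the inclusions $\E_{\ec[\F]} \hookrightarrow \E$ and $\E'_{\ec[\F]} \hookrightarrow \E'$ are themselves e.c.\ (established immediately before this proposition) to compare the two composite embeddings $\E_{\ec[\F]} \hookrightarrow \G$ and $\E'_{\ec[\F]} \hookrightarrow \G$ inside the witnessing $\G$. The e.c.\ property of the inclusions, combined with continuation to e.c.\ applied inside $\G$, ought to upgrade the one-sided natural transformations supplied by entwinement into pullback squares that identify the two composite embeddings as the same subtopos of $\G$. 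Carefully verifying that this comparison is strong enough to pin down an equivalence of the underlying toposes---rather than merely matching $\F$-point-visible data---is where I anticipate the bulk of the technical work to lie.
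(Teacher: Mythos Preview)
Your treatment of the converse direction ($\Leftarrow$) is correct and matches the paper exactly: form the pushout of embeddings and use continuation to e.c.\ morphisms (\thref{every-geometric-morphism-continues-to-ec}).

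For the forward direction, your reduction step is valid (it is precisely \thref{E-and-E_Fec-are-entwined} combined with \thref{entwinement-is-equivalence-relation}) but it does not actually simplify the problem. After reducing to ``$\E_{\ec[\F]}$ and $\E'_{\ec[\F]}$ are $\F$-entwined implies they are equivalent'', you face the same difficulty as before: entwinement supplies natural transformations only at the level of $\F$-points, and from this you must extract an identification of subtoposes. Your proposed tool---that the inclusion $\E_{\ec[\F]} \hookrightarrow \E$ is e.c.---is misdirected. That property concerns natural transformations from this inclusion to \emph{other geometric morphisms $\E_{\ec[\F]} \to \E$}, whereas the entwinement data lives among geometric morphisms $\F \to \G$; there is no evident bridge between the two without descending again to individual $\F$-points. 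So your sketch stops precisely where the content is.

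The paper dispenses with the reduction and works directly with e.c.\ $\F$-points. Given e.c.\ $p: \F \to \E$, it shows the composite $ip: \F \to \G$ factors through $j: \E' \hookrightarrow \G$ using the closure-operator description of the subtopos: one must show $(ip)^*$ inverts every $c$-dense monomorphism $m$. Entwinement is invoked \emph{twice}---once to pass from $\E$ to $\E'$ (where $j^*$ already inverts $m$), and once to return to $\E$---and then the e.c.\ property of $p$, applied to the composite transformation after a horizontal factorisation (\thref{horizontal-factor-for-subtopos}), yields a pullback square forcing $(ip)^*(m)$ to be invertible. A second, similar argument shows the resulting factor $p': \F \to \E'$ is itself e.c. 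Hence the images of e.c.\ $\F$-points of $\E$, viewed as subtoposes of $\G$, lie inside $\E'_{\ec[\F]}$, giving $\E_{\ec[\F]} \subseteq \E'_{\ec[\F]}$; symmetry finishes. This is the ``bulk of the technical work'' you anticipated, and it is carried out at the level of individual $\F$-points rather than via any property of the inclusion $\E_{\ec[\F]} \hookrightarrow \E$.
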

It follows from our proof that the category of e.c.\ geometric morphisms $\F \to \E$ is equivalent to the category of e.c.\ geometric morphisms $\F \to \E'$. The converse holds if we additionally specify how this equivalence is induced (i.e., by composing with arrows into some fixed $\G$ witnessing entwinement), but we will not make use of this.

The main ingredient in proving \thref{entwinement-characterisations} is the following generalisation of \thref{every-model-continues-to-ec-model}, which is of interest on its own as it shows that e.c.\ geometric morphisms are abundant.
\begin{theorem}
\thlabel{every-geometric-morphism-continues-to-ec}
For any geometric morphism $p: \F \to \E$, there is an e.c.\ geometric morphism $q: \F \to \E$ together with a natural transformation $\eta: p \Rightarrow q$.
\end{theorem}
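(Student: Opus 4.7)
My plan is to adapt the classical transfinite chain construction of Ben-Yaacov--Poizat (from the proof of \thref{every-model-continues-to-ec-model}) to the topos-theoretic setting. Write $\E$ as the classifying topos of a geometric theory $T$, so that $p: \F \to \E$ corresponds to a $T$-model $M$ in $\F$, and natural transformations $p \Rightarrow q$ correspond to $T$-homomorphisms in $\F$ out of $M$. By \thref{ec-on-generating-is-enough}, to verify that a geometric morphism is e.c.\ it suffices to check the pullback condition on subobjects of objects in some chosen generating set $\X$; I take $\X$ to be the set of interpretations $\{\phi(G_T)\}$ of geometric formulas in the generic model, so that each $\Sub(X)$ for $X \in \X$ is also a set (consisting of subobjects of the form $\psi(G_T) \leq \phi(G_T)$).

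Next I would construct a transfinite chain of $T$-homomorphisms $M = M_0 \to M_1 \to \cdots \to M_\alpha \to \cdots$ in $\F$ with the property that every ``pending violation'' at stage $\alpha$ is resolved by stage $\alpha+1$. At a successor stage $\alpha \to \alpha+1$, $M_{\alpha+1}$ is obtained as a suitable pushout, or colimit of a set-indexed diagram, that simultaneously absorbs all homomorphisms $M_\alpha \to N$ witnessing a failure of the pullback condition against some $A \leq X$ in $\X$. Such colimits exist in $\TMod[T](\F)$ since $T$ is geometric and $\F$ is a topos. At limit ordinals $\lambda$, I take the filtered colimit $M_\lambda = \colim_{\alpha < \lambda} M_\alpha$, again in $\TMod[T](\F)$. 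Letting $q: \F \to \E$ be the geometric morphism corresponding to $M_\kappa$ for a sufficiently large ordinal $\kappa$, and $\eta: p \Rightarrow q$ the natural transformation induced by $M_0 \to M_\kappa$, completes the construction.

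The main obstacle is the termination (smallness) argument: one must exhibit a set-sized $\kappa$ at which $M_\kappa$ is already e.c. Classically this uses the bound on the cardinality of $M$ together with the fact that there are only set-many coherent formulas in a fixed finite tuple of free variables. Here the analogues are (i) the smallness of $\X$ and each $\Sub(X)$ for $X \in \X$, and (ii) an internal L\"owenheim--Skolem-like bound on the ``size'' of the pushout extensions needed at each stage, so that only a set of witnessing $N$'s must be considered. With these in hand, a standard cofinality argument shows that any purported failure of the pullback condition at the final stage $q$ can be traced back to some stage $\alpha < \kappa$ at which it would have been resolved, yielding a contradiction. Making (ii) precise in an arbitrary topos $\F$---that is, pinning down what replaces ``cardinality of $M$'' internally---is the most delicate part of the argument.
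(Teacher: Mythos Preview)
Your high-level strategy---a transfinite chain of $T$-models in $\F$ terminating in an e.c.\ colimit---is exactly what the paper does. There are, however, two concrete gaps.

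First, your successor step does not work as written. You assert that pushouts (or arbitrary set-indexed colimits) exist in $\TMod(\F)$ ``since $T$ is geometric and $\F$ is a topos''; this is false. For $T$ the theory of fields and $\F = \Set$, the span $\mathbb{Q}(\sqrt{2}) \leftarrow \mathbb{Q} \to \mathbb{Q}(\sqrt{2})$ has no pushout among fields (in any candidate $P$ the two images of $\sqrt{2}$ satisfy $x^2 = 2$ and hence differ by a sign, which is incompatible with mapping universally to both the cocone with equal legs and the cocone with conjugate legs). Only \emph{filtered} colimits of $T$-models are guaranteed to exist and to be computed as in $\F$. The paper therefore never forms a pushout: the passage $M^i \to M^{i+1}$ is itself another \emph{chain}, indexed by an enumeration of all pairs $(\phi, a)$ to be handled, and at each sub-step one extends along a single chosen homomorphism $h: M^i_j \to M'$ into some model realising $\phi$ at (the image of) $a$, if such an $h$ exists. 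Every colimit in the construction is thus a chain colimit.

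Second, the ``internal L\"owenheim--Skolem bound'' you flag as open is precisely the ingredient the paper supplies, namely local presentability: every Grothendieck topos $\F$ is locally $\lambda$-presentable for some regular cardinal $\lambda$ (\cite[Corollary D2.3.7]{johnstone_sketches_2002}). This settles both smallness issues at once. The generalised elements to be tested are arrows $a: A \to M^i$ with $A$ a $\lambda$-presentable object of $\F$; there is only a set of such $A$ up to isomorphism, hence only a set of pairs $(\phi, a)$, and the successor-stage chain is well defined. The outer chain is taken of length $\lambda$. To verify that the colimit $N$ is e.c., any arrow $A \to N$ from a $\lambda$-presentable $A$ factors through some $M^i$ (since $N$ is a $\lambda$-filtered colimit), so the relevant pair was enumerated at stage $i$ and resolved by stage $i{+}1$; the pullback condition for an arbitrary test object then follows by writing it as a $\lambda$-directed colimit of $\lambda$-presentables and using uniqueness of the factorisation through the monomorphism $\phi(N) \hookrightarrow N$.
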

We note that neither $q$ nor $\eta$ is in any way unique. For example, a field embeds into many different algebraically closed fields of various trascendence degrees, and can embed in such fields in many different ways (cf.\ \thref{classical-ec-examples}(ii)).
\begin{proof}
Essentially the standard construction for models in $\Set$ of a coherent theory, as found in \cite[Theoreme 1]{ben-yaacov_fondements_2007}, can be adapted using the fact that $\F$ is locally $\lambda$-presentable for some $\lambda$ \cite[Corollary D2.3.7]{johnstone_sketches_2002}. For that reason, we temporarily adopt a more logical notation---let $T$ be a geometric theory such that $\E$ classifies $T$, i.e.\ $\E \simeq \Set[T]$. Then $p$ corresponds to a model $M$ of $T$ internal to $\F$. We will build an e.c.\ model $N$ in $\F$, together with a homomorphism $M \to N$ by constructing $N$ as the colimit of a chain of $T$-models $M = M^0 \to M^1 \to \ldots$ of length $\lambda$, which we construct by transfinite induction. This is enough, as we can then take $q$ to be the geometric morphism corresponding to $N$ and $\eta$ the natural transformation corresponding to the homomorphism $M \to N$.

We take $M_0$ as $M$, and at limit stages, we take the colimit of the chain we have constructed so far, which just leaves the successor step.  Having constructed $M^i$ we consider the set of all pairs $(\phi(x), a)$, where $\phi(x)$ is a geometric formula in the language of $T$ and $a: A \to M^i$ is an arrow with $\lambda$-presentable domain. There is indeed only a set of those, because (up to provable equivalence) there is only a set of geometric formulas and (up to isomorphism) there is also only a set of $\lambda$-presentable objects. Enumerate the members of this set of pairs as $\{(\phi_j(x), a_j)\}_{j < \kappa_i}$. We will construct $M^{i+1}$ as the colimit of a chain of $T$-models $M^i = M^i_0 \to M^i_1 \to \ldots$ of length $\kappa_i$. Again, at limit stages, we take the colimit of the chain we have constructed so far, which leaves us with the successor step.

Having constructed $M^i_j$, we consider two cases. If $A \xrightarrow{a_j} M^i \to M^i_j$ does not factor through $\phi_j(M_j^i)$, but there is a homomorphism $h: M^i_j \to M'$ into a model of $T$ such that $A \xrightarrow{a_j} M^i \to M^i_j \xrightarrow{h} M'$ factors through $\phi_j(M')$ then we take $M^i_{j+1} = M'$ with the homomorphism $M^i_j \to M^i_{j+1}$ to be $h$. If this is not the case then we simply take $M^i_{j+1} = M^i_j$ and the relevant arrow is the identity.

This completes the construction of $N$. We now show that $N$ is indeed e.c. Let $\phi(x)$ be a geometric formula in the language of $T$ and let $g: N \to N'$ be a homomorphism into a model of $T$. Suppose we are given arrows $a: A \to N$ and $A \to \phi(N')$ making the diagram below commute, we desire an arrow $\begin{tikzcd}[column sep = small]
   &[-18pt] A \ar[dashed]{r} & \phi(N) &[-18pt]
\end{tikzcd}$ making the whole diagram
\begin{equation}\label{eq:maybe_a_pb:continue-to-ec-model}
    \begin{tikzcd}
A \arrow[rrd, bend left=2em] \arrow[rdd, "a"', bend right] \arrow[rd, dashed] &   & \\
 & \phi(N) \arrow[r] \arrow[d, hook] & \phi(N') \arrow[d, hook] \\
& N \arrow[r, "g"] & N'   
\end{tikzcd}
\end{equation}
commute.  Note that, since $\phi(N) \hookrightarrow N$ is a monomorphism, such an arrow is automatically unique, and so this is enough to conclude that the bottom square is a pullback, and hence that $N$ is e.c. as desired.

We first perform this for the case where $A$ is $\lambda$-presentable. In that case the arrow $a: A \to N$ will factor as $A \xrightarrow{a'} M^i \to N$ for some $i < \lambda$. Let $j < \kappa_i$ be such that $(\phi(x), a') = (\phi_j(x), a_j)$. We distinguish two cases to show that $a$ factors through $\phi(N) \hookrightarrow N$.
\begin{enumerate}[label=\arabic*.]
\item If $A \xrightarrow{a_j} M^i \to M^i_j$ factors through $\phi({M^i_j}) \hookrightarrow M^i_j$ then it follows immediately by composing with the coprojections $M^i_j \to M^{i+1} \to N$.
\item If $A \xrightarrow{a_j} M^i \to M^i_j$ does not factor through $\phi({M^i_j} ) \hookrightarrow M^i_j$ then 
\[M^i_j \to M^{i+1} \to N \xrightarrow{g} N'\]
yields a homomorphism, let's denote it by $k$, into a model of $T$ such that the composite $A \xrightarrow{a_j} M^i \to M^i_j \xrightarrow{k} N'$ factors through $\phi(N') \hookrightarrow N'$. So by construction we must have that $A \xrightarrow{a_j} M^i \to M^i_j \to M^i_{j+1}$ factors through $\phi({M^i_{j+1}}) \hookrightarrow M^i_{j+1}$. The claim then follows by composing with the coprojections $M^i_{j+1} \to M^{i+1} \to N$.
\end{enumerate}
We thus have an arrow $\begin{tikzcd}[column sep = small]
   &[-15pt] A \ar[dashed]{r} & \phi(N) &[-15pt]
\end{tikzcd}$ such that in the diagram (\ref{eq:maybe_a_pb:continue-to-ec-model}) the bottom triangle, the square and the outer square commute. A simple diagram chase together with the fact that $\phi(N') \hookrightarrow N'$ is a monomorphism then shows that the top triangle also commutes, as desired.

Now let $A$ be any object of $\F$, which can therefore by written as the $\lambda$-directed colimit of $\lambda$-presentable objects $A = \colim_{i \in  I} A_i$. By composing with the coprojection $A_i \to A$, each $A_i$ can be fitted into a diagram as in (\ref{eq:maybe_a_pb:continue-to-ec-model}) and thus admits a commuting arrow $\begin{tikzcd}[column sep = small]
   &[-15pt] A_i \ar[dashed]{r} & \phi(N) &[-15pt]
\end{tikzcd}$. The uniqueness of these arrows makes $\phi(N)$ into a cocone for $(A_i)_{i \in I}$. Thus, by the universal property of the colimit, we obtain the required arrow $\begin{tikzcd}[column sep = small]
   &[-15pt] A \ar[dashed]{r} & \phi(N) &[-15pt]
\end{tikzcd}$.
\end{proof}
\begin{corollary}\thlabel{E-and-E_Fec-are-entwined}
    The toposes $\E$ and $\E_{\ec[\F]}$ are $\F$-entwined.
\end{corollary}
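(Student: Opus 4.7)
The plan is to exhibit $\E$ and $\E_{\ec[\F]}$ as $\F$-entwined by taking the particularly simple cospan
\[
\E \xrightarrow{\id_\E} \E \xhookleftarrow{\,i\,} \E_{\ec[\F]},
\]
where $i$ is the canonical inclusion from \thref{ec-subtopos}. With this choice, the two conditions of \thref{entwined} reduce to symmetric statements inside $\E$, one of which is entirely trivial.

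The non-trivial direction is: given a geometric morphism $p \colon \F \to \E$, produce $q \colon \F \to \E_{\ec[\F]}$ together with a natural transformation $\eta \colon p \Rightarrow iq$. I would first apply \thref{every-geometric-morphism-continues-to-ec} to $p$ to obtain an e.c.\ geometric morphism $q' \colon \F \to \E$ and a natural transformation $\eta \colon p \Rightarrow q'$. Taking the (surjection, embedding)-factorisation $\F \twoheadrightarrow q'(\F) \hookrightarrow \E$, the subtopos $q'(\F)$ sits in the union defining $\E_{\ec[\F]}$, so $q'(\F) \leq \E_{\ec[\F]}$ in the lattice of subtoposes of $\E$. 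Composing the surjection $\F \twoheadrightarrow q'(\F)$ with the resulting inclusion $q'(\F) \hookrightarrow \E_{\ec[\F]}$ yields the required $q \colon \F \to \E_{\ec[\F]}$ with $iq = q'$, and hence $\eta \colon p \Rightarrow iq$.

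For the converse direction, given any $q \colon \F \to \E_{\ec[\F]}$, I would simply set $p := iq$ and take the natural transformation to be the identity $\id \colon iq \Rightarrow iq$. Combining both directions verifies \thref{entwined}.

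There is essentially no hard step here: all of the work has already been done in \thref{every-geometric-morphism-continues-to-ec}, and the only point requiring any care is the observation that an e.c.\ morphism factors through $\E_{\ec[\F]}$, which is immediate from \thref{ec-subtopos} once the (surjection, embedding)-factorisation is invoked.
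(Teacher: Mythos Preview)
Your proposal is correct and follows essentially the same approach as the paper: both use the cospan $\E =\joinrel= \E \hookleftarrow \E_{\ec[\F]}$, dispatch the trivial direction with the identity transformation, and invoke \thref{every-geometric-morphism-continues-to-ec} for the non-trivial direction. You have simply made explicit the factorisation of the e.c.\ morphism through $\E_{\ec[\F]}$, which the paper leaves implicit since it is immediate from \thref{ec-subtopos}.
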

\begin{proof}
    Both toposes $\E$ and $\E_{\ec[\F]}$ are subtoposes of $\E$.  Vacuously, every geometric morphism $\F \to \E_{\ec[\F]} \hookrightarrow \E$ admits a natural transformation to a geometric morphism $\F \to \E =\joinrel= \E$.  The converse is provided by \thref{every-geometric-morphism-continues-to-ec}.
\end{proof}

\begin{lemma}
\thlabel{horizontal-factor-for-subtopos}
If $i: \E \hookrightarrow \G$ is a subtopos, then any transformation between composite geometric morphisms $\F \xrightarrow{p} \E \xhookrightarrow{i} \G$ and $\F \xrightarrow{q} \E \xhookrightarrow{i} \G$ admits a horizontal factorisation, i.e.\
\[
\begin{tikzcd}[row sep=tiny]
	& \E \\
	\F && \G \\
	& {\E}
	\arrow["i", curve={height=-6pt}, hook, from=1-2, to=2-3]
	\arrow["{ \eta}", Rightarrow, from=1-2, to=3-2]
	\arrow["p", curve={height=-6pt}, from=2-1, to=1-2]
	\arrow["q"', curve={height=6pt}, from=2-1, to=3-2]
	\arrow["i"', curve={height=6pt}, hook, from=3-2, to=2-3]
\end{tikzcd}
    =
\begin{tikzcd}
	\F && \E & \G.
	\arrow[""{name=0, anchor=center, inner sep=0}, "p", curve={height=-25pt}, from=1-1, to=1-3]
	\arrow[""{name=1, anchor=center, inner sep=0}, "q"', curve={height=25pt}, from=1-1, to=1-3]
	\arrow["i", hook, from=1-3, to=1-4]
	\arrow["{\eta'}", shorten <=5pt, shorten >=5pt, Rightarrow, from=0, to=1]
\end{tikzcd}
\]
\end{lemma}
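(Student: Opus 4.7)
The plan is to exploit the defining property of a geometric embedding, namely that $i_* \colon \E \to \G$ is fully faithful, or equivalently that the counit $\epsilon \colon i^* i_* \Rightarrow \id_\E$ is a natural isomorphism (see \cite[Lemma A4.2.8]{johnstone_sketches_2002}). From this I expect to obtain, for any $\F$, that precomposition with $i$ induces a bijection
\[
\Topos(\F,\E)(p,q) \longrightarrow \Topos(\F,\G)(ip, iq),
\]
so that in particular any $\eta \colon ip \Rightarrow iq$ arises uniquely as $i \cdot \eta'$ for some $\eta' \colon p \Rightarrow q$.

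Concretely, I would first translate $\eta$ via inverse images to a natural transformation $\eta^* \colon p^* i^* \Rightarrow q^* i^*$ of functors $\G \to \F$. Then, using that $\epsilon_X \colon i^* i_* X \xrightarrow{\sim} X$ is invertible for every $X \in \E$, I would define $(\eta')^* \colon p^* \Rightarrow q^*$ pointwise by
\[
(\eta')^*_X \;=\; q^*(\epsilon_X) \circ \eta^*_{i_* X} \circ p^*(\epsilon_X)^{-1}.
\]
Naturality of $(\eta')^*$ in $X$ is a direct consequence of the naturality of $\eta^*$ and of $\epsilon$, and the fact that $p^*$ and $q^*$ preserve composition.

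Next I would verify that $i \cdot \eta' = \eta$, i.e.\ that $\eta^*_Y = (\eta')^*_{i^* Y}$ for every $Y \in \G$. Unfolding the definition, this amounts to showing
\[
\eta^*_Y \;=\; q^*(\epsilon_{i^* Y}) \circ \eta^*_{i_* i^* Y} \circ p^*(\epsilon_{i^* Y})^{-1}.
\]
Using naturality of $\eta^*$ applied to the unit $u_Y \colon Y \to i_* i^* Y$ of the adjunction $i^* \dashv i_*$, together with the triangle identity $\epsilon_{i^* Y} \circ i^*(u_Y) = \id_{i^* Y}$, this reduces to an identity, completing the proof.

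I expect the only subtle step to be this last diagram chase; the rest is a routine application of the adjunction. Note moreover that the same argument yields uniqueness of $\eta'$, since if $i \cdot \eta_1' = i \cdot \eta_2'$ then evaluating at $i_* X$ and applying $\epsilon_X$ on either side recovers $(\eta_1')^*_X = (\eta_2')^*_X$, though only existence is needed for the stated lemma.
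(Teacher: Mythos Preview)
Your approach is correct and essentially identical to the paper's: both define $(\eta')_X$ as the composite $p^*(X) \cong p^* i^* i_*(X) \xrightarrow{\eta_{i_* X}} q^* i^* i_*(X) \cong q^*(X)$, using that the counit $\epsilon \colon i^* i_* \Rightarrow \id_\E$ is invertible because $i_*$ is fully faithful. The paper's proof is terser, merely writing down this formula, whereas you additionally check naturality, verify $i \cdot \eta' = \eta$ via the triangle identity, and note uniqueness; these are natural completions of the same argument rather than a different route.
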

\begin{proof}
    Recall that $i: \E \hookrightarrow \G$ being a subtopos means that $\E$ is a full reflective subcategory of $\G$.  Hence, the component $\eta'_X$ at an object $X \in \E$ can be taken as $p^*(X) \cong p^* i^* i_* (X) \xrightarrow{\eta_{i_*(X)}}  q^* i^* i_* (X) \cong q^*(X)$. 
\end{proof}

\begin{proof}[Proof of \thref{entwinement-characterisations}]
We first prove the left to right direction. Let $i: \E \hookrightarrow \G$ and $j: \E' \hookrightarrow \G$ witness that $\E$ and $\E'$ are $\F$-entwined. We first show that if $p: \F \to \E$ is e.c., then $\F \xrightarrow{p} \E \xhookrightarrow{i} \G$ factors as $\F \xrightarrow{p'} \E' \xhookrightarrow{j} \G$, after which we will show that the factor $\F \xrightarrow{p'} \E'$ is also e.c. 

Recall, from \cite[Proposition A4.3.11]{johnstone_sketches_2002} say, that the subtopos $j: \E' \hookrightarrow \G$ is induced by a \emph{closure operator} $c$ on subobjects in $\G$, and that $\F \xrightarrow{p} \E \xhookrightarrow{i} \G$ factors through $j: \E' \hookrightarrow \G$ if and only if $p^*i^*(m)$ is an isomorphism for every $c$-dense monomorphism.

By $\F$-entwinement, there is a diagram
\[
\begin{tikzcd}
	& \E \\
	\F & {\E'} & \G. \\
	& \E
	\arrow["\eta", Rightarrow, from=1-2, to=2-2]
	\arrow["i", curve={height=-6pt}, hook, from=1-2, to=2-3]
	\arrow["p", curve={height=-6pt}, from=2-1, to=1-2]
	\arrow["q", from=2-1, to=2-2]
	\arrow["r"', curve={height=6pt}, from=2-1, to=3-2]
	\arrow["{j}", hook, from=2-2, to=2-3]
	\arrow["\theta", Rightarrow, from=2-2, to=3-2]
	\arrow["i"', curve={height=6pt}, hook, from=3-2, to=2-3]
\end{tikzcd}
\]
Let $m: A \hookrightarrow X$ be a $c$-dense monomorphism in $\G$. In the commuting diagram
\[
\begin{tikzcd}
p^* i^* (A) \ar{r}{\eta_A} \ar[hook]{d}[']{p^* i^*(m)} & q^* j^* (A) \ar[hook]{d}{q^* j^* (m)} \ar{r}{\theta_A} & r^* i^* (A) \ar[hook]{d}{r^* i^* (m)} \\
p^* i^*(A) \ar{r}{\eta_X} & q^* j^* (X) \ar{r}{\theta_X} & r^* i^* (X),
\end{tikzcd}
\]
the morphism $q^* j^* (m)$ is invertible.  Also, by \thref{horizontal-factor-for-subtopos}, $\theta \eta$ admits a horizontal factorisation $\theta' \eta': p \Rightarrow r$, and so because $p$ is e.c., the outside square is also a pullback. We aim to show that $p^* i^* (m)$ is an isomorphism. This is equivalent to saying that there is a factorisation of subobjects
\[
\begin{tikzcd}
    p^* i^* (X) \ar[bend right, equals]{rd} \ar[dashed]{r} & p^* i^* (A) \ar[hook]{d}{p^* i^* (m)} \\
    & p^* i^* (X).
\end{tikzcd}
\]
Such a factorisation is obtained via the universal property of the pullback, as in the diagram
\[
\begin{tikzcd}
    & q^* j^*(X) \ar{rr}{(q^*j^*(m))^{-1}} && q^*j^*(A) \ar{d}{\theta_A} \\
    p^*i^*(X) \ar[bend right, equals]{rd} \ar[dashed]{r} \ar[bend left]{ru}{\eta_X} & p^*i^*(A) \ar[hook]{d}{p^*i^*(m)} \ar{rr}{\theta_A \eta_A} \arrow["\lrcorner"{anchor=center, pos=0.125}, draw=none]{rrd} && r^*i^*(A) \ar[hook]{d}{r^*i^*(m)} \\
    & p^*i^*(X) \ar{rr}{\theta_X \eta_X} && r^*i^*(X),
\end{tikzcd}
\]
where the necessary commutativity condition is given by
\[
r^*i^*(m) \circ \theta_A \circ (q^*j^*(m))^{-1} \circ \eta_X =
\theta_X \circ q^*j^*(m) \circ (q^*j^*(m))^{-1} \circ \eta_X =
\theta_X \circ \eta_X.
\]
Hence, $\F \xrightarrow{p} \E \xhookrightarrow{i} \G$ factors as $\F \xrightarrow{p'} \E' \xhookrightarrow{j} \G$.

We now show that $p': \F \to \E'$ is e.c. Given a natural transformation $\eta: p' \Rightarrow q$ to an arbitrary $q: \F \to \E'$, we will show that $\eta$ is an immersion. By $\F$-entwinement there exists a diagram
\[
\begin{tikzcd}[row sep=scriptsize]
	&& \E \\
	\\
	\F && {\E'} && \G\\
	\\
	&& \E
	\arrow["\cong"{description}, draw=none, from=1-3, to=3-3]
	\arrow["i", curve={height=-12pt}, hook, from=1-3, to=3-5]
	\arrow["p", curve={height=-12pt}, from=3-1, to=1-3]
	\arrow[""{name=0, anchor=center, inner sep=0}, "{p'}", curve={height=-18pt}, from=3-1, to=3-3]
	\arrow[""{name=1, anchor=center, inner sep=0}, "q"', curve={height=18pt}, from=3-1, to=3-3]
	\arrow["r"', curve={height=12pt}, from=3-1, to=5-3]
	\arrow["j", hook, from=3-3, to=3-5]
	\arrow["\theta", Rightarrow, from=3-3, to=5-3]
	\arrow["i"', curve={height=12pt}, hook, from=5-3, to=3-5]
	\arrow["\eta", shorten <=5pt, shorten >=5pt, Rightarrow, from=0, to=1]
\end{tikzcd}
\]
and hence, for any $A \leq X$ in $\G$, a commuting diagram
\[
\begin{tikzcd}
p^*i^*(A) \arrow[d, hook] &[-30pt] \cong p'^*j^*(A) \arrow[r, "\eta_{j^*(A)}"]  & q^*j^*(A) \arrow[d, hook] \arrow[r, "\theta_A"] & r^*i^*(A) \arrow[d, hook] \\
p^*i^*(X) & \cong p'^*j^*(X) \arrow[r, "\eta_{j^*(X)}"]                & q^*j^*(X) \arrow[r, "\theta_X"] & r^*i^*(X) 
\end{tikzcd}
\]
in $\F$. The outer square is a pullback since $p$ is e.c., and so the left square is a pullback because the vertical arrows are monomorphisms. We conclude by noting that because $\E'$ is a subtopos of $ \G$, and therefore a reflective subcategory, we may assume that any subobject inclusion in $\E'$ is of the form $j^*(A) \leq j^*(X)$ for $A \leq X$ in $\G$ (see also \thref{sheafification-surjective-on-subobjects}).

We have thus shown that if $p: \F \to \E$ is an e.c.\ geometric morphism then $\F \xrightarrow{p} \E \xhookrightarrow{i} \G$ factors as $\F \xrightarrow{p'} \E' \xhookrightarrow{j} \G$ with $p'$ also e.c. It follows that, as subtoposes of $\G$, the image $p(\F)$ is included in $\E'_{\ec[\F]}$ and so $\E_{\ec[\F]}$ is included in $\E'_{\ec[\F]}$. By symmetry, we have that the other inclusion also holds, hence $\E_{\ec[\F]} \simeq \E'_{\ec[\F]}$.

Conversely, suppose that $\E_{\ec[\F]} \simeq \E'_{\ec[\F]}$. By \cite[Scholium B3.4.4]{johnstone_sketches_2002} we can form the pushout of $\E \hookleftarrow \E_{\ec[\F]} \simeq \E'_{\ec[\F]} \hookrightarrow \E'$ to obtain $\E \hookrightarrow \G \hookleftarrow \E'$. Now let $p: \F \to \E$ be any geometric morphism. By \thref{every-geometric-morphism-continues-to-ec} there is an e.c.\ geometric morphism $q: \F \to \E$ together with a natural transformation $\eta: p \Rightarrow q$. By the definition of $\E_{\ec[\F]}$, we have that $q$ factors as $\F \xrightarrow{q'} \E_{\ec[\F]} \hookrightarrow \E$. We thus have that $\F \xrightarrow{q'} \E_{\ec[\F]} \hookrightarrow \E \hookrightarrow \G$ is the same as $\F \xrightarrow{q'} \E'_{\ec[\F]} \hookrightarrow \E' \hookrightarrow \G$, and so we can view $\eta$ as a natural transformation from $\F \xrightarrow{p} \E \hookrightarrow \G$ to the latter, as depicted below.
\[\begin{tikzcd}
	&[15pt] &[-30pt] &[-10pt]  \E \\
	\F & {\E_{\ec[\F]}} & {\simeq \E'_{\ec[\F]}} && \G \\
	&&& {\E'}
	\arrow[curve={height=-6pt}, hook, from=1-4, to=2-5]
	\arrow[""{name=0, anchor=center, inner sep=0}, "p", curve={height=-30pt}, from=2-1, to=1-4]
	\arrow["q'"', from=2-1, to=2-2]
	\arrow[curve={height=-6pt}, hook, from=2-3, to=1-4]
	\arrow[curve={height=6pt}, hook, from=2-3, to=3-4]
	\arrow[curve={height=6pt}, hook, from=3-4, to=2-5]
	\arrow["\eta", shorten <=4pt, Rightarrow, from=0, to=2-2]
\end{tikzcd}\]
We conclude by symmetry that $\E$ and $\E'$ are $\F$-entwined.
\end{proof}
\begin{corollary}
    For any two toposes $\E$ and $\F$ we have ${\left(\E_{\ec[\F]}\right)}_{\ec[\F]} \simeq \E_{\ec[\F]}$.
\end{corollary}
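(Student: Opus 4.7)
The plan is to chain together the two results immediately preceding the corollary. By \thref{E-and-E_Fec-are-entwined}, the toposes $\E$ and $\E_{\ec[\F]}$ are $\F$-entwined, and by \thref{entwinement-characterisations}, two $\F$-entwined toposes have equivalent classifying toposes of e.c.\ morphisms from $\F$. Applying this equivalence to the pair $\E$ and $\E_{\ec[\F]}$ yields exactly $\E_{\ec[\F]} \simeq {\left(\E_{\ec[\F]}\right)}_{\ec[\F]}$.

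Concretely, I would write: ``By \thref{E-and-E_Fec-are-entwined}, the toposes $\E$ and $\E_{\ec[\F]}$ are $\F$-entwined. Applying \thref{entwinement-characterisations} then gives the required equivalence.''

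There is essentially no obstacle here --- the hard work has already been done in establishing \thref{every-geometric-morphism-continues-to-ec} (which powers \thref{E-and-E_Fec-are-entwined}) and in proving \thref{entwinement-characterisations}. The only thing to double-check is that no circularity or size issue arises from iterating the $(-)_{\ec[\F]}$ construction, but since $\E_{\ec[\F]}$ is itself just a subtopos of $\E$ (hence a genuine Grothendieck topos), the construction applies to it unproblematically and the cited results apply verbatim.
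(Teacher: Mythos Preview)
Your proposal is correct and matches the paper's own proof essentially verbatim: the paper simply says ``This is a simple application of \thref{E-and-E_Fec-are-entwined} and \thref{entwinement-characterisations}.'' Your added remark that $\E_{\ec[\F]}$ is a genuine Grothendieck topos (so the construction iterates without issue) is a harmless sanity check.
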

\begin{proof}
    This is a simple application of \thref{E-and-E_Fec-are-entwined} and \thref{entwinement-characterisations}.
\end{proof}

\section{Strongly existentially closed geometric morphisms}
\label{sec:sec-geometric-morphisms}
We now consider a topos-theoretic generalisation of strongly existentially closed models. Unlike e.c.\ models, the notion of being an s.e.c.\ model is highly dependent on the particular presentation of the theory (i.e.\ it is not a Morita invariant property, see \thref{ec-sec-examples}). We will therefore need to include extra information about a choice of generating set and subobject basis for our topos. We will show in \thref{sec-for-coherent} that, in the case of coherent toposes, this choice can be removed.
\begin{definition}
\thlabel{sec-geometric-morphism}
Let $\E$ be a topos, and let $\X$ be a generating set of objects for $\E$. A \emph{subobject basis} (for $\X$) is an indexed set $\B = (\B_X)_{X \in \X}$ where, for each $X \in \X$, $\B_X$ is a set of subobjects of $X$ such that $\B_X$ generates $\Sub(X)$.

For a generating set $\X$ and a subobject basis $\B$ we say that a geometric morphism $p: \F \to \E$ is \emph{strongly existentially closed with respect to $(\X, \B)$}, or \emph{$(\X, \B)$-s.e.c.}, if for each subobject $A \leq X$ in $\B$
\[
    p^*(A \lor \neg A) = p^*(X).
\]
For such subobjects $A \leq X$, we say that $A$ is \emph{complemented along} $p$.

We say $p$ is \emph{strongly existentially closed} (or \emph{s.e.c.} for short) if $p$ is $(\X, \B)$-s.e.c.\ for some generating set $\X$ and subobject basis $\B$.
\end{definition}
In \thref{sec-as-geometric-morphism-is-sec-as-model} we will see the motivation for this definition of s.e.c., and how it coincides with the classical notion from \thref{existentially-closed-model}.
\begin{remark}
\thlabel{complemented-along-p-remark}
Here, $\neg A$ denotes the pseudo-complement of $A$ in $\Sub(X)$ (i.e.\ the subobject $A \rightarrow \bot$).  For any geometric morphism $p: \F \to \E$, it is easily checked that $p^*(\neg A) \leq \neg p^*(A)$. The condition $p^*(A \lor \neg A) = p^*(X)$ is equivalent to requiring that $p^*(A)$ is complemented in $\Sub(p^*(X))$, and that $p^*(\neg A)$ is that complement, in particular $p^*(\neg A) = \neg p^*(A)$.
\end{remark}
\begin{lemma}
\thlabel{sec-stable-under-postcomposition}
Let $\E$ be some topos, let $\X$ be a generating set for $\E$ and let $\B$ be a subobject basis.
\begin{enumerate}[label=(\roman*)]
\item If $p: \F \to \E$ is $(\X, \B)$-s.e.c.\ then any composite $pq: \G \to \F \to \E$ is $(\X, \B)$-s.e.c.\ as well.
\item If $\{q_i: \G \to \F\}_{i \in I}$ is a jointly conservative family of geometric morphisms and $pq_i$ is $(\X, \B)$-s.e.c.\ for all $i \in I$ then $p$ is $(\X, \B)$-s.e.c.
\end{enumerate}
\end{lemma}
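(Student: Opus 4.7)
The plan is that both parts follow routinely from the functoriality of inverse image functors, together with the fact that any inverse image functor preserves joins of subobjects (being cocontinuous and preserving pullbacks, hence image factorisations). The defining condition $p^*(A \lor \neg A) = p^*(X)$ for $A \leq X$ in $\B$ is an equality of subobjects of $p^*(X)$, and such equalities are preserved by further inverse images and reflected by jointly conservative families of inverse images.

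For part (i), I would fix $A \leq X$ in $\B$ and simply apply $q^*$ to the hypothesis $p^*(A \lor \neg A) = p^*(X)$. Since $(pq)^* = q^* p^*$ and $q^*$ commutes with the join of subobjects, the result rearranges to $(pq)^*(A \lor \neg A) = (pq)^*(X)$, which is the condition required of $pq$.

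For part (ii), I would again fix $A \leq X$ in $\B$ and consider the canonical monomorphism $p^*(A \lor \neg A) \hookrightarrow p^*(X)$; showing that this is an isomorphism is precisely the content of the conclusion. Applying each $q_i^*$ yields the monomorphism $(pq_i)^*(A \lor \neg A) \hookrightarrow (pq_i)^*(X)$, which is an isomorphism by the s.e.c.\ hypothesis on $pq_i$. By joint conservativity of $\{q_i^*\}_{i \in I}$, the original monomorphism in $\F$ is then also an isomorphism, as required. Neither step presents any real obstacle: the whole argument rests on the fact that inverse image functors commute with the formation of $A \lor \neg A$ (viewed in the subobject frame), so the s.e.c.\ condition transfers mechanically along the composition, and the extra ingredient for (ii) is the standard characterisation of jointly conservative families.
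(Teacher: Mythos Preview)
Your proposal is correct and matches the paper's proof essentially verbatim: apply $q^*$ (resp.\ each $q_i^*$) to the equality $p^*(A\lor\neg A)=p^*(X)$ and, for (ii), invoke joint conservativity to reflect the resulting isomorphism. One small remark: you do not actually need $q^*$ to commute with joins or with ``the formation of $A\lor\neg A$''---the object $A\lor\neg A$ is computed once in $\E$ before any inverse image is applied, so all that is used is that $q^*$ preserves equalities of subobjects (which is tautological).
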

\begin{proof}
The first statement is obvious from the definition: for each $X \in \X$, the subobject lattice $\Sub(X)$ is generated by subobjects $A \leq X$ in $\B_X$ such that $p^*(A \lor \neg A) = p^*(X)$, and so $q^* p^* (A \lor \neg A) = q^* p^*(X)$ as well.

For the second statement we note that $q_i^* p^*(A \vee \neg A) = q_i^* p^*(X)$ for all subobjects $A \leq X$ in $\B_X$ and all $i \in I$. So since $\{q_i: \G \to \F\}_{i \in I}$ is jointly conservative, we have that $p^*(A \vee \neg A) = p^*(X)$ for all such $A \leq X$.
\end{proof}
\begin{definition}
\thlabel{sec-subtopos}
We call a subtopos $i: \F \hookrightarrow \E$ \emph{$(\X, \B)$-s.e.c.}\ if the embedding $i$ is $(\X, \B)$-s.e.c. We call a subtopos s.e.c.\ if it is $(\X, \B)$-s.e.c.\ for some $(\X, \B)$.
\end{definition}
\begin{corollary}
\thlabel{geometric-morphism-sec-iff-factor-through-sec}
Let $\E$ be some topos, let $\X$ be a generating set for $\E$ and let $\B$ be a subobject basis. A geometric morphism $p: \F \to \E$ is $(\X, \B)$-s.e.c.\ if and only if $p$ factors through a $(\X, \B)$-s.e.c.\ subtopos of $\E$.
\end{corollary}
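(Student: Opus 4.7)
The plan is to derive both implications as a direct corollary of Lemma \thref{sec-stable-under-postcomposition}, combined with the (surjection, embedding)-factorisation of a geometric morphism recalled just before \thref{ec-subtopos}. The key observation I would use is that a geometric surjection $s$ has conservative inverse image, so the singleton $\{s\}$ is automatically a jointly conservative family of geometric morphisms.

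For the forward direction, suppose $p: \F \to \E$ is $(\X, \B)$-s.e.c. I would take the (surjection, embedding)-factorisation $\F \xrightarrow{s} p(\F) \xhookrightarrow{i} \E$ of $p$. Since $\{s\}$ is jointly conservative, Lemma \thref{sec-stable-under-postcomposition}(ii) applied to the hypothesis that $i \circ s = p$ is $(\X,\B)$-s.e.c.\ immediately yields that the embedding $i: p(\F) \hookrightarrow \E$ is itself $(\X, \B)$-s.e.c. By construction, $p$ factors through this s.e.c.\ subtopos.

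For the converse, if $p$ factors as $\F \xrightarrow{r} \G \xhookrightarrow{i} \E$ with $i: \G \hookrightarrow \E$ an $(\X, \B)$-s.e.c.\ subtopos, then Lemma \thref{sec-stable-under-postcomposition}(i) applied with $i$ in the role of ``$p$'' and $r$ in the role of ``$q$'' immediately gives that the composite $p = i \circ r$ is $(\X, \B)$-s.e.c.

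There is essentially no obstacle here—the result is a straightforward repackaging of the preceding lemma, with the (surjection, embedding)-factorisation supplying the canonical witnessing subtopos for the forward direction. The only minor subtlety is to recall that the conservativity of $s^*$ for a geometric surjection is the standard defining property (equivalently, $s^*$ reflects isomorphisms), which makes the appeal to part (ii) of the lemma legal.
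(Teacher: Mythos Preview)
Your proof is correct and matches the paper's argument exactly: the paper also derives the ``if'' direction from \thref{sec-stable-under-postcomposition}(i) and the ``only if'' direction by applying \thref{sec-stable-under-postcomposition}(ii) to the (surjection, embedding)-factorisation of $p$, using surjectivity of $\F \twoheadrightarrow p(\F)$ for conservativity. The only cosmetic difference is the order in which you present the two implications.
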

\begin{proof}
If $\E' \hookrightarrow \E$ is $(\X, \B)$-s.e.c., then the composite $\F \to \E' \hookrightarrow \E$ is $(\X, \B)$-s.e.c.\ by \thref{sec-stable-under-postcomposition}(i). Conversely, consider the image factorisation $\F \twoheadrightarrow p(\F) \hookrightarrow \E$ of $p$. The composite is $(\X, \B)$-s.e.c.\ and $\F \twoheadrightarrow p(\F)$ is surjective, so by \thref{sec-stable-under-postcomposition}(ii) $p(\F) \hookrightarrow \G$ is $(\X, \B)$-s.e.c.
\end{proof}
As suggested by the terminology, being s.e.c.\ is stronger property than being e.c.  See \thref{ec-sec-examples} for examples of e.c.\ morphisms that are not s.e.c.
\begin{proposition}
\thlabel{geometric-morphism-sec-implies-ec}
Any s.e.c.\ geometric morphism $p: \F \to \E$ is also e.c.
\end{proposition}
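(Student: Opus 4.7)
The plan is to reduce the problem to a direct Heyting-algebraic calculation inside $\Sub(p^*(X))$ for $X$ in the chosen generating set, exploiting that the s.e.c.\ condition tells us exactly that $p^*(A) \vee p^*(\neg A) = p^*(X)$ for $A \in \B_X$.

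First I would fix a natural transformation $\eta: p \Rightarrow q$ for some arbitrary $q: \F \to \E$, and reduce what needs to be checked. By \thref{ec-on-generating-is-enough}, it suffices to verify the pullback condition in \thref{ec-geometric-morphisms} for all monomorphisms $A \hookrightarrow X$ with $X \in \X$. Since $\B_X$ generates $\Sub(X)$ under joins, every such $A$ is a join $\bigvee_i A_i$ with $A_i \in \B_X$; because both $p^*$ and $q^*$ preserve joins of subobjects, and because pullbacks in a topos commute with colimits (hence with joins of subobjects), it is enough to verify the pullback condition for subobjects in the basis $\B_X$.

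Now fix $A \in \B_X$, and let $P := p^*(X) \times_{q^*(X)} q^*(A) = \eta_X^{-1}(q^*(A))$; I want to show $P = p^*(A)$. The inclusion $p^*(A) \leq P$ is immediate from the naturality square $\eta_A: p^*(A) \to q^*(A)$. For the reverse, I compute $P \wedge p^*(\neg A)$: by naturality, $\eta_X$ sends $p^*(\neg A)$ into $q^*(\neg A)$, so this meet is contained in $\eta_X^{-1}(q^*(A) \wedge q^*(\neg A))$. Since $q^*$ is left exact it preserves meets and the initial subobject, giving $q^*(A) \wedge q^*(\neg A) = q^*(A \wedge \neg A) = q^*(\bot) = \bot$; hence $P \wedge p^*(\neg A) = \bot$.

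Finally, invoking the s.e.c.\ hypothesis at $A \in \B_X$ gives $p^*(A) \vee p^*(\neg A) = p^*(X)$, and then the distributivity of the Heyting algebra $\Sub(p^*(X))$ yields
\[
P = P \wedge p^*(X) = (P \wedge p^*(A)) \vee (P \wedge p^*(\neg A)) = P \wedge p^*(A) = p^*(A),
\]
where the last equality uses $p^*(A) \leq P$. I don't expect any real obstacle: everything reduces, via \thref{ec-on-generating-is-enough} and stability of joins under pullback, to the single Heyting-algebra identity above, and that in turn is an immediate consequence of the defining property of s.e.c.\ combined with the naturality of $\eta$ and left exactness of $q^*$.
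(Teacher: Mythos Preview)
Your proof is correct and follows essentially the same approach as the paper. The paper factors the Heyting-algebra computation you perform into a separate lemma (showing that any subobject complemented along $p$ yields a pullback naturality square) and then handles the passage to arbitrary subobjects via joins at the end rather than at the beginning, but the underlying argument is identical.
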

The proof of \thref{geometric-morphism-sec-implies-ec} is an application of the following lemma.
\begin{lemma}
\thlabel{complemented-implies-square-is-pb}
Let $p: \F \to \E$ be a geometric morphism and suppose that $A \leq X$ is complemented along $p$. Then for any geometric morphism $q: \F \to \E$ and any natural transformation $\eta: p \Rightarrow q$ the naturality square below is a pullback.
\[
\begin{tikzcd}
p^*(A) \arrow[r, "\eta_A"] \arrow[d, hook] & q^*(A) \arrow[d, hook] \\
p^*(X) \arrow[r, "\eta_X"]                 & q^*(X)                
\end{tikzcd}
\]
\end{lemma}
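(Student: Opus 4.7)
The plan is to show that the naturality square at $A \hookrightarrow X$ is a pullback by proving the equality $p^*(A) = \eta_X^{-1}(q^*(A))$ as subobjects of $p^*(X)$. The inequality $p^*(A) \leq \eta_X^{-1}(q^*(A))$ is automatic from the naturality of $\eta$ at the monomorphism $A \hookrightarrow X$, so the whole content is the reverse inequality, and this is where the complementedness of $A$ along $p$ will enter.

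To get the reverse inequality, I would simultaneously consider the naturality square at $\neg A \hookrightarrow X$, which gives $p^*(\neg A) \leq \eta_X^{-1}(q^*(\neg A))$. Since $q^*$ preserves finite limits (hence intersections of subobjects) and colimits (hence the initial subobject $\bot$), we have $q^*(A) \wedge q^*(\neg A) = q^*(A \wedge \neg A) = q^*(\bot) = \bot$ in $\Sub(q^*(X))$. Pulling back along $\eta_X$ (which again preserves intersections and $\bot$) then yields $\eta_X^{-1}(q^*(A)) \wedge \eta_X^{-1}(q^*(\neg A)) = \bot$ in $\Sub(p^*(X))$.

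Now I invoke the hypothesis: $A$ is complemented along $p$, so $p^*(A) \vee p^*(\neg A) = p^*(X)$ (cf.\ \thref{complemented-along-p-remark}). Setting $B = \eta_X^{-1}(q^*(A))$ and exploiting the fact that $\Sub(p^*(X))$ is a distributive lattice (being a Heyting algebra, as $\F$ is a topos), I can decompose
\[
B = B \wedge p^*(X) = (B \wedge p^*(A)) \vee (B \wedge p^*(\neg A)).
\]
The second joinand satisfies $B \wedge p^*(\neg A) \leq \eta_X^{-1}(q^*(A)) \wedge \eta_X^{-1}(q^*(\neg A)) = \bot$, so $B = B \wedge p^*(A) \leq p^*(A)$, giving $\eta_X^{-1}(q^*(A)) \leq p^*(A)$ and hence equality.

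There is no substantial obstacle here; the argument is essentially a Boolean-style manipulation, and the only place where care is needed is checking that $q^*$ and $\eta_X^{-1}$ each preserve the initial subobject and binary meets of subobjects, which follows from their being inverse-image/pullback functors between toposes. All of this can be packaged into a short, direct computation in the Heyting algebra $\Sub(p^*(X))$.
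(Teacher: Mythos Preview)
Your proof is correct and follows essentially the same route as the paper's: both reduce to showing $\eta_X^{-1}(q^*(A)) \leq p^*(A)$ by intersecting with the decomposition $p^*(X) = p^*(A) \vee p^*(\neg A)$, using distributivity, and killing the $p^*(\neg A)$-piece via $p^*(\neg A) \leq \eta_X^{-1}(q^*(\neg A))$ together with $q^*(A) \wedge q^*(\neg A) = 0$. The only cosmetic difference is that you spell out the naturality at $\neg A$ and the preservation properties of $q^*$ and $\eta_X^{-1}$ separately, whereas the paper folds these into a single chain of inequalities.
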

\begin{proof}
Recall from \thref{complemented-along-p-remark} that the requirement $p^*(X) = p^*(A \lor \neg A)$ ensures that $p^*(A)$ is complemented in $\Sub(p^*(X))$ and that $p^*(\neg A)$ is that complement.

We aim to show that $p^*(A) = \eta^{-1}_X(q^*(A))$. One inequality, $p^*(A) \leq \eta^{-1}_X(q^*(A))$, is provided by the commutativity of the naturality square. We also have that
\[
    p^*(\neg A) \land \eta^{-1}_X(q^*(A)) \leq
    \eta^{-1}_X(q^*(\neg A)) \land \eta^{-1}_X(q^*(A)) =
    \eta^{-1}_X q^*(\neg A \land A) = 0.
\]
Therefore, we compute that
\begin{align*}
    \eta^{-1}_X(q^*(A)) &= (p^*(A) \lor p^*(\neg A)) \land \eta^{-1}_X(q^*(A)) \\
    &= (p^*(A) \land \eta^{-1}_X(q^*(A))) \lor (p^*(\neg A) \land \eta^{-1}_X(q^*(A))) \\
    &= p^*(A) \land \eta_X^{-1}(q^*(A)).
\end{align*}
Hence, $\eta_X^{-1}(q^*(A)) \leq p^*(A)$, concluding the proof.
\end{proof}
\begin{proof}[Proof of \thref{geometric-morphism-sec-implies-ec}]
Let $(\X, \B)$ be such that $p$ is $(\X, \B)$-s.e.c, and let $\eta: p \Rightarrow q$ be any natural transformation into some geometric morphism $q: \F \to \E$. By \thref{ec-on-generating-is-enough} it suffices to show that, for every subobject $A \leq X$ with $X \in \X$, the naturality square as in \thref{complemented-implies-square-is-pb} is a pullback.

By assumption, there is a family of subobjects $\{A_i \leq X\}_{i \in I}$ in $\B_X$ that are complemented along $p$ such that $A = \bigvee_{i \in I} A_i$. Using \thref{complemented-implies-square-is-pb}, we conclude that
\[
\eta^{-1}_X(q^*(A)) =
\eta^{-1}_X \left( q^* \left( \bigvee_{i \in I} A_i \right) \right) =
\bigvee_{i \in I} \eta^{-1}_X(q^*(A_i)) =
\bigvee_{i \in I} p^*(A_i) =
p^*(A)
\]
as desired.
\end{proof}
When the codomain topos of a geometric morphism is coherent, we can safely omit the choice of generating set and subobject basis since there exists a canonical choice.
\begin{theorem}
\thlabel{sec-for-coherent}
Let $\E$ be a coherent topos and suppose that $p: \F \to \E$ is s.e.c.  Then for any coherent $X$ and any compact subobject $A \leq X$, we have that $A$ is complemented along $p$. In particular, taking $\E_\coh$ to be the set of coherent objects in $\E$ and $\E_\comp$ to be the set of compact subobjects of coherent objects, we have that $p$ is $(\E_\coh, \E_\comp)$-s.e.c.
\end{theorem}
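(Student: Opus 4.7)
The task is to show that, for every coherent $X$, the class
\[
\mathcal{N}(X) := \{A \leq X : p^*(A \vee \neg A) = p^*(X)\}
\]
contains every compact subobject of $X$. The ``in particular'' clause then follows automatically, since $\E_\coh$ is a generating set for $\E$ and $\E_\comp(X)$ is a subobject basis for each coherent $X$.

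The plan hinges on three closure properties of $\mathcal{N}$: (i) each $\mathcal{N}(Y)$ is a Boolean subalgebra of $\Sub(Y)$, in particular closed under finite joins and finite meets; (ii) $\mathcal{N}$ is closed under pullback along arbitrary morphisms; and (iii) $\mathcal{N}$ satisfies descent along epimorphisms, in the sense that if $g: Y \twoheadrightarrow Z$ is epi in $\E$ and $g^{-1}(A) \in \mathcal{N}(Y)$, then $A \in \mathcal{N}(Z)$. All three follow from \thref{complemented-along-p-remark} together with the facts that $p^*$ is a Heyting homomorphism and preserves epimorphisms; for (iii), one uses that $p^*(g)^{-1}$ embeds $\Sub(p^*(Z))$ as a Heyting subalgebra of $\Sub(p^*(Y))$, so Booleanness of $p^*(g^{-1}(A))$ transfers down to that of $p^*(A)$.

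Now let $A \leq X$ be a compact subobject with $X$ coherent. Using that both $\X$ and $\E_\coh$ are generating sets for $\E$, together with compactness of $X$, I would build a finite jointly epimorphic family $\{W_{ij} \to X\}_{(i,j) \in F}$ in which each $W_{ij}$ is coherent and each composite $W_{ij} \to X$ factors through some $X_i \in \X$. Setting $f: \coprod_{(i,j)} W_{ij} \twoheadrightarrow X$, by descent (iii) it suffices to show that each $A \times_X W_{ij}$ belongs to $\mathcal{N}(W_{ij})$.

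This is the crux of the argument. Writing $A_i := A \times_X X_i \leq X_i$, pullback pasting gives $A \times_X W_{ij} = A_i \times_{X_i} W_{ij}$. Since $\B_{X_i}$ join-generates $\Sub(X_i)$, we have $A_i = \bigvee_m B_m$ for some $B_m \in \B_{X_i}$, and since pullback preserves joins we obtain
\[
A \times_X W_{ij} = \bigvee_m (B_m \times_{X_i} W_{ij}),
\]
where each summand lies in $\mathcal{N}(W_{ij})$ by (ii) and the hypothesis $\B_{X_i} \subseteq \mathcal{N}(X_i)$. The main obstacle is that this join is a priori infinite, whereas (i) only provides closure under \emph{finite} joins; this is overcome by the key observation that $A \times_X W_{ij}$ is itself compact. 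Indeed, both $A$ and $W_{ij}$ are compact and $X$ is coherent---hence stable---so their pullback over $X$ is compact. Compactness truncates the displayed join to a finite subjoin, and closure under finite joins places $A \times_X W_{ij}$ in $\mathcal{N}(W_{ij})$. Descent along $f$ then completes the proof.
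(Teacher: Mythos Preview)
Your overall strategy is essentially the paper's: cover $X$ by compact objects factoring through $\X$, use compactness to truncate an infinite join of basis subobjects to a finite one, and descend along an epimorphism. The paper first closes $\X$ under finite coproducts and subobjects so as to work with a single compact $B \in \X$ and an epimorphism $k\colon B \twoheadrightarrow X$, and performs the Heyting-algebra computation explicitly; you instead package the same computation as closure properties of $\mathcal{N}$ and use a finite coproduct of coherent $W_{ij}$. Both routes are fine.

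There is, however, a genuine error in your justification of (i)--(iii): $p^*$ is \emph{not} a Heyting homomorphism on subobject lattices. Inverse image functors preserve finite meets and arbitrary joins of subobjects, but not pseudo-complements in general; this is precisely the content of the sub-open condition (\thref{sub-open}) appearing later in the paper, and it is why ``complemented along $p$'' is strictly stronger than ``$p^*(A)$ is complemented'' (\thref{complemented-along-p-remark}). Your gloss on (iii), that ``Booleanness of $p^*(g^{-1}(A))$ transfers down to that of $p^*(A)$'', thus proves the wrong thing.

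The closure properties do hold, but for different reasons. For (i), use \thref{complemented-along-p-remark} to write $p^*(\neg A_i) = \neg p^*(A_i)$ for $A_i \in \mathcal{N}(Y)$, combined with $\neg(A_1 \vee A_2) = \neg A_1 \wedge \neg A_2$. For (ii) and (iii), the relevant Heyting homomorphisms are the pullback maps $g^{-1}\colon \Sub(Z) \to \Sub(Y)$ internal to $\E$ (which are open frame homomorphisms in any topos); writing $\neg g^{-1}(A) = g^{-1}(\neg A)$ gives $p^*(g)^{-1}(p^*(A \vee \neg A)) = p^*(g^{-1}(A) \vee \neg g^{-1}(A))$, and then injectivity of $p^*(g)^{-1}$ (since $p^*$ preserves epimorphisms) yields the descent. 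This is exactly the computation the paper carries out explicitly.
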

\begin{proof}
The ``in particular'' bit follows from the former because the coherent objects generate $\E$ and because in a coherent topos, for any object $X$, its compact subobjects generate $\Sub(X)$.

Let $(\X,\B)$ be such that $p$ is $(\X,\B)$-s.e.c. We may assume that $\X$ is closed under finite coproducts and subobjects. This can be done by closing $\X$ first under finite coproducts and then under subobjects, as coproducts of subobjects are subobjects of coproducts. We quickly argue why these operations preserve the property that $p$ is $(\X,\B)$-s.e.c.

Let $\{X_i\}_{i \in I}$ be a (finite)\footnote{In fact, we may take any cardinality, showing that we could close $\X$ under coproducts of size $< \kappa$ for any cardinal $\kappa$, but we are only interested in finite coproducts here.} set of objects in $\X$. For each $i \in I$, $\B_{X_i} $ is a family $ \{A_j \leq X_i\}_{j \in J_i}$ of subobjects that generates $\Sub(X_i)$, each of which is complemented along $p$. Composing with the coprojection $X_i \hookrightarrow \coprod_{i \in I} X_i$, we may also view this as a family of subobjects of $\coprod_{i \in I} X_i$. Using the fact that coproducts in a Grothendieck topos are \emph{disjoint}, one straightforwardly checks that, for each $j \in J_i$, $A_j$ is also complemented along $p$ as a subobject of $\coprod_{i \in I} X_i$. Letting $i$ vary, we thus conclude that $\{A_j \leq \coprod_{i \in I} X_i \mid i \in I, j \in J_i\}$ is a family of subobjects that generates $\Sub(\coprod_{i \in I} X_i)$, each of which is complemented along $p$.  For closure under subobjects, let $Y \leq X$ with $X \in \X$. 
Then $\{A \wedge Y \leq Y \mid A \leq X \text{ in } \B_X \}$ is generating for $\Sub(Y)$ and each member is complemented along $p$.

Let $A \leq X$ be a compact subobject of a coherent object, as in the statement. Then $X$ admits a covering $\{B_i \to X\}_{i \in I}$ by objects in $\X$, and each $B_i$ admits a covering $\{C_j \to B_i\}_{j \in J_i}$ by coherent objects. For each $i \in I$ and each $j \in J_i$, the image $C_j \twoheadrightarrow B'_j \hookrightarrow B_i$ is compact, since $B'_j$ has a cover $C_j \twoheadrightarrow B'_j$ by a compact object (see \cite[Lemma D3.3.3]{johnstone_sketches_2002}). Thus, $A$ admits a covering by compact objects in $\X$ (as we assumed $\X$ to be closed under subobjects). Moreover, as $X$ is coherent, this covering can be taken to finite, and hence by taking the coproduct $B = \coprod B'_j$, there is an epimorphism $k: B \twoheadrightarrow X$ whose domain is a compact object in $\X$ (by \cite[Lemma D3.3.3]{johnstone_sketches_2002} and the assumption that $\X$ is closed under finite coproducts).

Since $A$ and $B$ are both compact, we have that the pullback
\[
\begin{tikzcd}
    k^{-1}(A) \ar[two heads]{r} \ar[hook]{d} & A \ar[hook]{d} \\
    B \ar[two heads]{r}{k} & X
\end{tikzcd}
\]
is compact too. By assumption, there is a family $\{A_i\}_{i \in I}$ of subobjects of $B$ such that $k^{-1}(A) = \bigvee_{i \in I} A_i$ and $A_i$ is complemented along $p$ for all $i \in I$. Since $k^{-1}(A)$ is compact, we may assume $I$ to be finite. Using general facts about Heyting algebras, we have that
\[
\neg k^{-1}(A) =
\neg \bigvee_{i \in I} A_i =
\bigwedge_{i \in I} \neg A_i.
\]
We thus compute that
\[
p^*(k^{-1}(A) \vee \neg k^{-1}(A)) =
p^* \left( \bigvee_{i \in I} A_i \vee \bigwedge_{i \in I} \neg A_i \right) =
\bigwedge_{i \in I} \bigvee_{j \in I} p^*(A_j  \vee \neg A_i) =
p^*(B),
\]
and so we see that $k^{-1}(A)$ is complemented along $p$.

The maps $k^{-1}: \Sub(X) \to \Sub(B)$ and $p^*(k)^{-1}: \Sub(p^*(X)) \to \Sub(p^*(B))$ are open frame homomorphisms (see \cite[Exercise III.16]{maclane_sheaves_1994}). In particular, they are morphisms of Heyting algebras. Using this, together with the fact that $p$ preserves pullbacks and that $k^{-1}(A)$ is complemented along $p$, we compute that
\begin{align*}
p^*(B) &= p^*(k^{-1}(A) \vee \neg k^{-1}(A)), \\
&= p^*(k)^{-1}(p^*(A)) \vee p^*(k)^{-1}(p^*(\neg A)), \\
&= p^*(k)^{-1}(p^*(A \vee \neg A)).
\end{align*}
Since $k$ is an epimorphism, $p^*(k)$ is also an epimorphism, and so $p^*(k)^{-1}$ is injective. Since $p^*(B) = p^*(k^{-1}(X)) = p^*(k)^{-1}(p^*(X))$, we thus conclude that $p^*(A \vee \neg A) = p^*(X)$, and so $A$ is complemented along $p$, as required.
\end{proof}
\begin{corollary}
\thlabel{sec-as-geometric-morphism-is-sec-as-model}
A model of a coherent theory $T$ in $\Set$ is s.e.c.\ according to \thref{existentially-closed-model} if and only if the corresponding geometric morphism $p: \Set \to \Set[T]$ is s.e.c.\ according to \thref{sec-geometric-morphism}.
\end{corollary}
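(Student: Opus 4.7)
The plan is to prove the equivalence in two directions, both leveraging \thref{sec-for-coherent} and the standard dictionary between subobjects in $\Set[T]$ and geometric formulas. The key fact I will use throughout is that subobjects of $\phi(G_T)$ are exactly those of the form $\theta(G_T) \leq \phi(G_T)$ for a geometric formula $\theta$ with $T \vdash \theta \vdash_x \phi$, and that every such $\theta$ is a disjunction of coherent formulas $\psi_i$ with $T \vdash \psi_i \vdash_x \phi$ (since $\bigvee_i \psi_i \vdash_x \phi$ in geometric logic is equivalent to each $\psi_i \vdash_x \phi$). In particular, the pseudo-complement $\neg \psi(G_T)$ in $\Sub(\phi(G_T))$ is the join of all $\chi(G_T)$ where $\chi$ is coherent, $T \vdash \chi \vdash_x \phi$, and $T \vdash \chi \wedge \psi \vdash_x \bot$.

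For the forward direction, assume $M$ is classically s.e.c. I will choose the generating set $\X = \{\phi(G_T) \mid \phi \text{ coherent}\}$ and the subobject basis $\B_{\phi(G_T)} = \{\psi(G_T) \leq \phi(G_T) \mid \psi \text{ coherent}, T \vdash \psi \vdash_x \phi\}$, which generates $\Sub(\phi(G_T))$ by the dictionary above. I must then verify that each $\psi(G_T) \in \B_{\phi(G_T)}$ is complemented along $p$, i.e.\ that $\psi(M) \cup (\neg \psi(G_T))(M) = \phi(M)$. The nontrivial inclusion is handled by taking $a \in \phi(M) \setminus \psi(M)$, applying classical s.e.c.\ to $\psi$ to produce a coherent $\chi'$ with $M \models \chi'(a)$ and $T \vdash \chi' \wedge \psi \vdash_x \bot$, and then setting $\chi := \chi' \wedge \phi$. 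Then $\chi$ is coherent, $T \vdash \chi \vdash_x \phi$, and $T \vdash \chi \wedge \psi \vdash_x \bot$, so $\chi(G_T) \leq \neg \psi(G_T)$ in $\Sub(\phi(G_T))$, giving $a \in \chi(M) \subseteq (\neg \psi(G_T))(M)$.

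For the backward direction, suppose $p$ is s.e.c. Since $\Set[T]$ is coherent, \thref{sec-for-coherent} tells us every compact subobject of a coherent object is complemented along $p$. Given a coherent $\phi(x)$ and a tuple $a \in M^{|x|}$ with $M \not\models \phi(a)$, I apply this to $\phi(G_T)$ viewed as a compact subobject of the coherent object $\top_x(G_T)$ (the product of sorts). This yields $\phi(M) \cup (\neg \phi(G_T))(M) = M^{|x|}$, so $a \in (\neg \phi(G_T))(M)$. Unpacking via the dictionary, this set equals $\bigcup \{\psi(M) \mid \psi \text{ coherent}, T \vdash \psi \wedge \phi \vdash_x \bot\}$, producing the desired coherent $\psi$ with $M \models \psi(a)$ and $T \vdash \psi \wedge \phi \vdash_x \bot$.

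The main obstacle is correctly identifying how the pseudo-complement $\neg \psi(G_T)$ evaluates in $M$; once it is written as a disjunction of coherent formulas, the translation between the two definitions is essentially mechanical. A minor technical point is justifying that the $\B_{\phi(G_T)}$ chosen in the forward direction is indeed a subobject basis, which reduces to the fact that disjunctions in positive logic distribute correctly over implications.
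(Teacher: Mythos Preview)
Your proof is correct and follows essentially the same approach as the paper: both use the generating set $\X = \{\phi(G_T) \mid \phi \text{ coherent}\}$ with basis given by compact (i.e.\ coherent-formula) subobjects, invoke \thref{sec-for-coherent} to reduce to this canonical choice, and rely on the explicit computation of $\neg \psi(G_T)$ in $\Sub(\phi(G_T))$ as the join of $\chi(G_T)$ over coherent $\chi$ with $T \vdash \psi \wedge \chi \vdash_x \bot$. The only cosmetic difference is that the paper packages both directions into a single biconditional (``$M$ is s.e.c.\ iff every coherent $\psi(G_T) \leq \phi(G_T)$ is complemented along $p$''), whereas you treat the two directions separately and, in the backward direction, specialise immediately to the ambient object $\top_x(G_T)$ rather than a general coherent $\phi$.
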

\begin{proof}
For the right to left direction we could string together \thref{geometric-morphism-sec-implies-ec,ec-as-model-is-ec-as-point,classical-ec-is-sec}. However, a direct proof will add intuition to \thref{sec-geometric-morphism}.

Let $\X$ be the set of interpretations of coherent formulas in the generic model $G_T$ in $\Set[T]$. For each $\phi(G_T) \in \X$, we take $\B_{\phi(G_T)}$ as the set of subobjects $\psi(G_T) \leq \phi(G_T)$ where $\psi$ is also a coherent formula, which are precisely the subobjects of $\phi(G_T)$ that are compact. By \thref{sec-for-coherent}, it suffices to show that $M$ is s.e.c.\ (in the sense of \thref{existentially-closed-model}) if and only if every $\psi(G_T) \leq \phi(G_T)$, with $\phi$ and $\psi$ coherent, is complemented along $p$.

We can compute the pseudo-complement of $\psi(G_T)$ in $\Sub(\phi(G_T))$, as
\[
\bigvee \{ \chi(G_T) \mid \chi(x) \text{ a geometric formula and } \psi(G_T) \land \chi(G_T)  = 0 \text{ in } \Sub(\phi(G_T)) \},
\]
which can be further reduced to
\[
\bigvee \{ \chi(G_T) \mid \chi(x) \text{ a coherent formula and } T \text{ proves }  \psi(x) \land \chi(x) \vdash_x \bot \}.
\]
So $\psi(G_T) \leq \phi(G_T)$ is complemented along $p$ if and only if the sequent
\[
\phi(x) \vdash_x \psi(x) \vee \bigvee \{ \chi(x) \text{ a coherent formula} \mid T \text{ proves } \psi(x) \land \chi(x) \vdash_x \bot \}
\]
holds in $M$.  We conclude by noting the the above sequent holding for every pair of coherent formulas $\phi(x)$ and $\psi(x)$ exactly axiomatises $M$ being s.e.c.\ (in the sense of \thref{existentially-closed-model}, see also \thref{geometric-axiomatisation-always-possible}).
\end{proof}
\begin{example}
\thlabel{ec-sec-examples}
We consider various (non-)examples of being s.e.c.\ or e.c.\ for models in $\Set$.  Once the notion of a locally zero-dimensional topos has been introduced, we will give an example of a geometric morphism that is e.c.\ but not s.e.c\ with respect to any choice of generating set or subobject basis in \thref{ec-but-never-sec}.
\begin{enumerate}[label=(\roman*)]
\item Let $T$ be any of the coherent theories from \thref{classical-ec-examples}. In that example we classified the e.c.\ models of $T$. So, by \thref{ec-as-model-is-ec-as-point,sec-as-geometric-morphism-is-sec-as-model}, a geometric morphism $p: \Set \to \Set[T]$ is e.c.\ and s.e.c.\ if and only if the corresponding model is e.c.\ (and hence s.e.c., by \thref{classical-ec-is-sec}). We will see additional examples where being e.c.\ and s.e.c.\ coincide in toposes different from $\Set$ in \thref{automorphism-examples}.
\item Let $X$ be a topological space. Then we can view $X$ as a propositional theory $T_X$ by introducing a propositional symbol for each open set of $X$, and have the geometric propositional theory $T_X$ capture unions and intersections, so that $\Sh(X) \simeq \Set[T_X]$. An element $x \in X$ becomes a $\Set$-model of $T_X$ by setting $x \models U$ if and only if $x \in U$. A homomorphism $x \to y$ between two elements $x, y \in X$ is just an inequality $x \leq y$ in the \emph{specialisation order} on $X$ (i.e., for all opens $U \subseteq X$, if $x \in U$ then $y \in U$). Let us now assume that $X$ is sober, so its $\Set$-models correspond precisely to its elements. Then a point $x \in X$ is e.c.\ if, whenever $x \leq y$, then $y \in U$ also implies $x \in U$, and so $y \leq x$. In other words, a point $x \in X$ is e.c.\ if and only if the point $x$ is \emph{maximal} in the pre-order $(X, \leq)$. In particular, in a sober T$_1$ space all points are e.c.
\item We continue the previous point. Suppose that $X$ is a zero-dimensional space. Let $\X$ be the set of subterminal objects in $\Sh(X)$, and let $\B$ be the set of complemented subobjects of subterminal objects. As $X$ is zero-dimensional, we have that $\B$ is a subobject basis for the generating set $\X$. As a result, every point on $\Sh(X)$ is $(\X, \B)$-s.e.c.

If $X$ is locally zero-dimensional, but not necessarily zero-dimensional (e.g., the space $Y$ in \thref{sheaves-on-zero-dimensional-locale-are-not-zero-dimensional}) we can adjust the above by taking $\X$ to consist of all those subterminal objects in $\Sh(X)$ that correspond to open zero-dimensional subspaces of $X$. Again, we take $\B$ to be the set of complemented subobjects of objects in $\X$, and then every point on $\Sh(X)$ is $(\X, \B)$-s.e.c.

When $X$ is not locally zero-dimensional, being $(\X, \B)$-s.e.c.\ heavily depends on the choice of $\X$ and $\B$. At least when $X$ is Hausdorff, such a choice can always be made. Fix some $x \in X$. Let $\U_0 = \{U \subseteq X \text{ open} \mid x \in U\}$ and $\U_1 = \{U \subseteq X \text{ open} \mid x \in \neg U\}$, where $\neg U$ denotes the interior of $X \setminus U$, i.e.\ the pseudo-complement of $U$ in the Heyting algebra of opens of $X$. By Hausdorffness, $\U_0 \cup \U_1$ is a basis for $X$. Once again, we take $\X$ to be the set of subterminal objects in $\Sh(X)$ and we let $\B$ be the set of subobjects of subterminals corresponding to an open in $\U_0 \cup \U_1$. The geometric morphism $p: \Set \to \Sh(X)$ corresponding to $x$ is then $(\X, \B)$-s.e.c. Indeed, let $V \in \X$. If $x \not \in V$ then $p^*$ evaluates $V$ and all its subobjects to $0$. Otherwise, let $U \leq V$ in $\B_V$, we either have $U \in \U_0$, and so $p^*(U \vee \neg U) = p^*(U) \vee p^*(\neg U) = 1 \vee 0 = 1 = p^*(V)$, or we have $U \in \U_1$, and so $p^*(U \vee \neg U) = p^*(U) \vee p^*(\neg U) = 0 \vee 1 = 1 = p^*(V)$.

The above choice heavily depends on the point $x \in X$. For example, when $X = \R$ with the usual Euclidean topology, then the above choice will be different for different points. One easily checks that if $(\X, \B)$ is as above for some $x \in \R$, then for any $x' \neq x$ we have that the morphism $p': \Set \to \Sh(X)$ corresponding to $x'$ is not $(\X, \B)$-s.e.c.
\end{enumerate}
We note that the above shows that there can be no version of \thref{every-geometric-morphism-continues-to-ec} for $(\X, \B)$-s.e.c. That is, any of the above examples of an e.c.\ model that is not $(\X, \B)$-s.e.c.\ cannot admit a homomorphism into an $(\X, \B)$-s.e.c.\ model, as that homomorphism would then be an immersion (since the domain is e.c.) and one quickly verifies that this implies that the domain then has to be $(\X, \B)$-s.e.c.\ as well.
\end{example}

\section{Locally zero-dimensional toposes}
\label{sec:locally-zero-dimensional-toposes}
The axiomatisation of s.e.c.\ models described in \thref{geometric-axiomatisation-always-possible} is reminiscent of the notion of zero-dimensionality in topology.  Indeed, \thref{ec-subtopos-characterisation} will solidify this connection.  To that end, in this section we introduce the notion of a locally zero-dimensional topos.

\subsection{Zero-dimensional locales}
Before defining locally zero-dimensional toposes, we recall some facts regarding zero-dimensional and locally zero-dimensional locales. Zero-dimensional locales extend the classical notion of a zero-dimensional space from topology, i.e.\ those spaces with a basis of clopen subsets.
\begin{definition}
\thlabel{zd-locale}
A locale $L$ is called \emph{zero-dimensional} if every element is a join of complemented elements. A locale $L$ is called \emph{locally zero-dimensional} if there exists an open cover $\U \subseteq L$ such that, for every $U \in \U$, the open sub-locale $U \hookrightarrow L$, that is the locale $U = \{V \in L \mid V \leq U\}$, is zero-dimensional.
\end{definition}
As one would expect, a topological space is (locally) zero-dimensional if and only if its locale of open sets is (locally) zero-dimensional.

Recall that a morphism of locales $f: X \to Y$ is a \emph{local homeomorphism} if $X$ admits an open covering $\U$ such that, for each $U \in \U$ there is $V \in Y$ such that $f$ restricts to a homeomorphism $U \to V$ (see e.g.\ \cite[Exercise IX.9]{maclane_sheaves_1994}).
\begin{lemma}
\thlabel{epic-local-homeomorphism-preserves-locally-zero-dimensional}
Suppose that $f: X \to Y$ is a local homeomorphism of locales. If $Y$ is locally zero-dimensional, then so is $X$. Conversely, if $f$ is epic and $X$ is locally zero-dimensional, then $Y$ is locally zero-dimensional too.
\end{lemma}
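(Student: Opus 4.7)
The plan is to first record the auxiliary fact that any open sublocale of a zero-dimensional locale is itself zero-dimensional: if $U \leq L$ and $A \leq L$ is complemented with complement $\neg A$, then $A \wedge U$ is complemented in the down-set $\mathord{\downarrow} U$ with complement $\neg A \wedge U$ (directly using distributivity of the frame). Hence meeting a generating family of complemented elements of $L$ with $U$ exhibits a generating family of complemented elements of $U$. From this, local zero-dimensionality passes to open sublocales as well.

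For the forward direction, I would fix an open cover $\mathcal{U}$ of $X$ witnessing that $f$ is a local homeomorphism, so each $U \in \mathcal{U}$ is sent homeomorphically onto some open $V_U \leq Y$, and an open cover $\mathcal{W}$ of $Y$ by zero-dimensional opens. Then $\{V_U \wedge W \mid W \in \mathcal{W}\}$ covers $V_U$ by zero-dimensional opens (using the auxiliary fact, since $V_U \wedge W$ is open in $W$); transporting back along the homeomorphism $U \cong V_U$ yields an open cover of $U$ by zero-dimensional opens. Taking the union over $U \in \mathcal{U}$ gives an open cover of $X$ by zero-dimensional opens, so $X$ is locally zero-dimensional.

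For the converse, I would first refine $\mathcal{U}$ by meeting with a zero-dimensional open cover of $X$. The resulting opens are simultaneously zero-dimensional (open sublocales of zero-dimensional opens) and map homeomorphically onto opens $V_U$ of $Y$, so each $V_U$ is zero-dimensional. The crux is then to show $\bigvee_U V_U = Y$. Since each $U \leq f^*(V_U)$ (because $(f|_U)^*(V_U)$ is the top of $\mathord{\downarrow} U$), we obtain
\[
f^*\!\left(\bigvee_U V_U\right) \;=\; \bigvee_U f^*(V_U) \;\geq\; \bigvee_U U \;=\; 1,
\]
so $f$ factors through the open sublocale $\bigvee_U V_U \hookrightarrow Y$.

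The main obstacle, and the only place the hypothesis ``$f$ is epic'' is used, is now to deduce $\bigvee_U V_U = Y$ from this factorisation. I would invoke the equivalence that epimorphisms in the category of locales coincide with surjections (i.e., $f^*$ injective), which in turn means $f$ admits no proper sublocale factorisation through $Y$; this forces the open sublocale $\bigvee_U V_U$ to equal $Y$. The $V_U$ therefore form the required open cover of $Y$ by zero-dimensional opens.
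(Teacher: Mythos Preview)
Your proof is correct and follows essentially the same strategy as the paper's: transport zero-dimensionality along the pieces of the local homeomorphism, refining covers as needed, and use epicness to guarantee the images cover $Y$. The paper is simply terser---it says only ``as $f$ is epic, $\{f(U) \mid U \in \mathcal{U}\}$ describes an open covering of $Y$''---whereas you spell out the computation $f^*(\bigvee_U V_U) \geq \bigvee_U U = 1$ and invoke injectivity of $f^*$; both amount to the same thing.
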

\begin{proof}
Suppose that $Y$ is locally zero-dimensional. Then $X$ admits an open covering $\U$ such that each $U \in \U$ is homeomorphic to an open sub-locale of $Y$, which is therefore locally zero-dimensional.

Conversely, suppose that $f$ is epic and $X$ is locally zero-dimensional.  There is an open covering $\U$ of $X$ such that $f(U)$ is open and $f|_U: U \to f(U)$ is a homeomorphism for each $U \in \U$.  Since zero-dimensionality is inherited by sub-locales, each $U \in \U$ admits a covering by opens that are zero-dimensional, so we may as well assume that each $U \in \U$ is zero-dimensional, and thus $f(U)$ is zero-dimensional too.  Finally, as $f$ is epic, $\{ f(U) \mid U \in \U \}$ describes an open covering of $Y$.
\end{proof}
\begin{example}
\thlabel{sheaves-on-zero-dimensional-locale-are-not-zero-dimensional}
The statement in \thref{epic-local-homeomorphism-preserves-locally-zero-dimensional} cannot be modified by replacing `locally zero-dimensional' with `zero-dimensional'. The idea of this counterexample is due to Peter Johnstone, and appears in \cite[Example 3.3]{bunge_quasi_2007}. Let $X = \N \cup \{\infty\}$ be the one-point compactification of the natural numbers (which carry the discrete topology) and let $Y = (X + X) / {\sim}$ be the quotient space that is obtained by identifying the two copies of $\N$. The evident quotient maps
\[
\begin{tikzcd}
    X + X \ar[two heads]{r} & Y \ar[two heads]{r} & X
\end{tikzcd}
\]
are both epic local homeomorphisms.  Moreover, $X$ and $X+X$ are zero-dimensional, but $Y$ is only locally zero-dimensional.
\end{example}
\subsection{Locally zero-dimensional toposes}
\begin{definition}
We call an object $X$ in a topos $\E$ a \emph{zero-dimensional object} (resp., \emph{locally zero-dimensional object}) if $\Sub(X)$ is a zero-dimensional (resp., locally zero-dimensional) as a locale.
\end{definition}
Our definition of locally zero-dimensional topos requires a choice of a generating set of objects, but like we have seen for s.e.c.\ geometric morphisms, there is a canonical choice if the topos is coherent (see \thref{locally-zero-dimensional-coherent-topos}).
\begin{definition}
\thlabel{locally-zero-dimensional-topos}
We say that a topos $\E$ is \emph{locally zero-dimensional} if there is a generating set of zero-dimensional objects.
\end{definition}
\begin{remark}
Evidently, a topos is locally zero-dimensional if and only if there is a generating set of locally zero-dimensional objects, because each locally zero-dimensional object is covered by zero-dimensional objects.
\end{remark}
The following justifies our terminology of locally zero-dimensional topos.
\begin{proposition}
\thlabel{sheaves-on-locally-zero-dimensional-locale-are-locally-zero-dimensional}
For a locale $L$, the following are equivalent:
\begin{enumerate}[label=(\roman*)]
\item\label{L_is_0dim} $L$ is a locally zero-dimensional locale,
\item\label{ShL_is_0dim} $\Sh(L)$ is a locally zero-dimensional topos,
\item\label{all_subobj_of_ShL_is_0dim} every object in $\Sh(L)$ is locally zero-dimensional.
\end{enumerate}
\end{proposition}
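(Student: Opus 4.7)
The plan is to exploit the standard equivalence between objects of $\Sh(L)$ and local homeomorphisms into $L$. Under this equivalence, an object $X \in \Sh(L)$ corresponds to an étale locale $p_X: \tilde{X} \to L$, and its subobject frame $\Sub(X)$ is identified with the frame of opens $\opens(\tilde{X})$. With this identification in hand, all three implications will fall out of \thref{epic-local-homeomorphism-preserves-locally-zero-dimensional}.

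For \ref{L_is_0dim} $\Rightarrow$ \ref{all_subobj_of_ShL_is_0dim}, I would take any object $X \in \Sh(L)$ and pass to the associated local homeomorphism $p_X: \tilde{X} \to L$. The first half of \thref{epic-local-homeomorphism-preserves-locally-zero-dimensional} then gives that $\tilde{X}$ is locally zero-dimensional, so $\Sub(X) \cong \opens(\tilde{X})$ makes $X$ into a locally zero-dimensional object. The implication \ref{all_subobj_of_ShL_is_0dim} $\Rightarrow$ \ref{ShL_is_0dim} is essentially immediate: by hypothesis every object is locally zero-dimensional, and the remark after \thref{locally-zero-dimensional-topos} allows one to replace a generating set of locally zero-dimensional objects by a generating set of zero-dimensional ones.

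The main content lies in \ref{ShL_is_0dim} $\Rightarrow$ \ref{L_is_0dim}. Given a generating set $\{X_i\}_{i \in I}$ of zero-dimensional objects for $\Sh(L)$, the plan is to observe that the family of maps $\{X_i \to 1\}$ is jointly epic, and to translate this under the étale equivalence into the statement that the images $U_i = p_{X_i}(\tilde{X}_i)$ of the corresponding local homeomorphisms jointly cover $L$. Each factorisation $\tilde{X}_i \twoheadrightarrow U_i \hookrightarrow L$ is an epic local homeomorphism onto an open sublocale, so the second half of \thref{epic-local-homeomorphism-preserves-locally-zero-dimensional} yields that each $U_i$ is locally zero-dimensional (using that zero-dimensional implies locally zero-dimensional). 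As $\{U_i\}$ covers $L$, we conclude that $L$ itself is locally zero-dimensional. The main obstacle here is largely bookkeeping: one must carefully invoke the equivalence between $\Sh(L)$ and local homeomorphisms over $L$ and the induced identification of subobject frames with frames of opens of étale locales, but these are standard facts.
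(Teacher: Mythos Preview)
Your argument is correct, and it runs along a different cycle of implications than the paper's: you prove \ref{L_is_0dim} $\Rightarrow$ \ref{all_subobj_of_ShL_is_0dim} $\Rightarrow$ \ref{ShL_is_0dim} $\Rightarrow$ \ref{L_is_0dim}, whereas the paper declares \ref{L_is_0dim} $\Rightarrow$ \ref{ShL_is_0dim} and \ref{all_subobj_of_ShL_is_0dim} $\Rightarrow$ \ref{L_is_0dim} to be clear (via $\Sub(1)\cong L$ and the fact that subterminals generate $\Sh(L)$) and then attacks \ref{ShL_is_0dim} $\Rightarrow$ \ref{all_subobj_of_ShL_is_0dim}. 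For that last step the paper invokes the description $\Sh(L)\simeq \mathbf{LH}/L$ and, for an arbitrary local homeomorphism $Y\to L$, produces a span $Y \twoheadleftarrow U \hookrightarrow \coprod_{i\in I} L$ of local homeomorphisms, arguing that $\coprod_i L$ is locally zero-dimensional and hence so are $U$ and $Y$ by \thref{epic-local-homeomorphism-preserves-locally-zero-dimensional}. Your \ref{L_is_0dim} $\Rightarrow$ \ref{all_subobj_of_ShL_is_0dim} is the same idea stripped down: rather than passing through the auxiliary span, you apply the forward direction of \thref{epic-local-homeomorphism-preserves-locally-zero-dimensional} directly to $p_X\colon \tilde X\to L$. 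This is shorter and loses nothing.

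The genuinely new ingredient in your write-up is the step \ref{ShL_is_0dim} $\Rightarrow$ \ref{L_is_0dim}, where you push a generating family of zero-dimensional objects down to an open cover of $L$ via the image factorisations $\tilde X_i \twoheadrightarrow U_i \hookrightarrow L$ and then use the epic half of \thref{epic-local-homeomorphism-preserves-locally-zero-dimensional}. The paper never argues this implication directly; indeed, its argument for \ref{ShL_is_0dim} $\Rightarrow$ \ref{all_subobj_of_ShL_is_0dim} (as written) appeals to the local zero-dimensionality of $L$, i.e.\ to \ref{L_is_0dim}, so your cycle is arguably the cleaner way to close the loop. The only point to make explicit is why $\Sub_{\Sh(L)}(X)\cong \opens(\tilde X)$, but this is the standard identification $\Sh(L)/X \simeq \Sh(\tilde X)$ for \'etale $\tilde X \to L$, and it is safe to cite it as you do.
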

\begin{proof}
Clearly, using $\Sub(1) \cong L$ and that the subterminals in $\Sh(L)$ generate the topos, \ref{L_is_0dim} implies \ref{ShL_is_0dim} and \ref{all_subobj_of_ShL_is_0dim} implies \ref{L_is_0dim}.  It remains to show that \ref{ShL_is_0dim} implies \ref{all_subobj_of_ShL_is_0dim}.  We will use the description of $\Sh(L)$ as the slice ${\bf LH}/L$ of local homeomorphisms over $L$. For each local homeomorphism $f: Y \to L$, there is a diagram
\[
\begin{tikzcd}
    Y & \ar[two heads]{l} U \ar[hook]{r} & \coprod_{i \in I} L,
\end{tikzcd}
\]
in which both arrows are local homeomorphisms (see, for instance, \cite[Theorem C1.4.7]{johnstone_sketches_2002}).  The coproduct $\coprod_{i \in I} L$ is locally zero-dimensional since $L$ is locally zero-dimensional; hence, the sub-locale $U$ inherits this local zero-dimensionality. Finally, $Y$ is locally zero-dimensional by an application of \thref{epic-local-homeomorphism-preserves-locally-zero-dimensional}.
\end{proof}
\begin{remark}
Recall from \thref{sheaves-on-zero-dimensional-locale-are-not-zero-dimensional} that there exists a local homeomorphism $Y \to L$ where $L$ is zero-dimensional but $Y$ is only {locally zero-dimensional}. Thus, a locale $L$ being zero-dimensional does \emph{not} imply that every subobject frame in $\Sh(L)$ is zero-dimensional.
\end{remark}
\begin{question}
\thlabel{locally-zero-dimensional-question}
For any topos $\E$, it quickly follows that if all objects in $\E$ are locally zero-dimensional then $\E$ is locally zero-dimensional. Does the converse hold in general?
\end{question}
\begin{theorem}
\thlabel{locally-zero-dimensional-coherent-topos}
Let $\E$ be a locally zero-dimensional coherent topos, then every coherent object in $\E$ is zero-dimensional.
\end{theorem}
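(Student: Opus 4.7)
The plan is to reduce the statement to the intermediate claim that every compact subobject of a coherent $X \in \E$ is complemented in $\Sub(X)$. Once that claim is in hand, zero-dimensionality of $X$ follows at once: since $X$ is coherent, $\Sub(X)$ is a coherent frame, so every subobject of $X$ is a join of compact subobjects, and the intermediate claim then exhibits every subobject of $X$ as a join of complemented subobjects, as required.

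To prove the intermediate claim, I would construct an epimorphism $k \colon B \twoheadrightarrow X$ with $B$ simultaneously compact and zero-dimensional, closely mimicking the cover construction from the proof of \thref{sec-for-coherent}. Let $\X$ be a generating set of zero-dimensional objects for $\E$. Each $Z \in \X$ admitting a map to $X$ may be further covered by coherent objects $W \to Z$; factoring this map as $W \twoheadrightarrow W' \hookrightarrow Z$, the image $W'$ is compact (as a quotient of the compact $W$, using \cite[Lemma D3.3.3]{johnstone_sketches_2002}) and zero-dimensional (being a subobject of the zero-dimensional $Z$, since zero-dimensionality is inherited by sub-locales). The family $\{W' \to X\}$ obtained this way is jointly epimorphic, and as $X$ is compact, finitely many suffice. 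I then take $B$ to be the coproduct of that finite subfamily: finite coproducts of compacts are compact, and since coproducts in a topos are disjoint we have $\Sub(B_1 \sqcup B_2) \cong \Sub(B_1) \times \Sub(B_2)$ as frames, so $B$ inherits zero-dimensionality from the summands as well.

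Given a compact $A \leq X$, stability of $X$ guarantees that $k^{-1}(A) = B \times_X A$ is compact in $\Sub(B)$. Because $\Sub(B)$ is zero-dimensional, $k^{-1}(A)$ is a join of complemented elements, and its compactness reduces this to a finite join, which is then complemented (finite joins of complemented elements are complemented in any frame). Invoking the standard fact that pullback in a topos gives a Heyting morphism $k^{-1} \colon \Sub(X) \to \Sub(B)$, I have $\neg k^{-1}(A) = k^{-1}(\neg A)$, so
\[
k^{-1}(A \lor \neg A) = k^{-1}(A) \lor k^{-1}(\neg A) = B = k^{-1}(X).
\]
Since $k$ is epimorphic, $\exists_k \circ k^{-1} = \id_{\Sub(X)}$, so $k^{-1}$ is injective, whence $A \lor \neg A = X$, as required.

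The main obstacle I foresee is the cover construction itself: ensuring that $B$ can be chosen to be both compact and zero-dimensional requires jointly exploiting the two generating sets coming from the coherent and locally zero-dimensional hypotheses on $\E$---this is precisely where both hypotheses are used together. Once that cover is in hand, everything else is routine frame theory: complementation of compact elements in zero-dimensional frames, plus the Heyting behaviour of pullback on subobject lattices.
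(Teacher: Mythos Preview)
Your argument is correct and is essentially the direct proof that the paper alludes to but chooses not to spell out: the paper instead packages the argument by observing that local zero-dimensionality makes $\id_\E$ an s.e.c.\ geometric morphism (complemented subobjects form a basis, and ``complemented along $\id_\E$'' just means ``complemented''), and then invokes \thref{sec-for-coherent} as a black box to conclude that compact subobjects of coherent objects are complemented. You instead unwind that lemma in the special case $p = \id_\E$, building the compact zero-dimensional cover $B \twoheadrightarrow X$ by hand and finishing with the Heyting-preservation and injectivity of $k^{-1}$; this is exactly the engine inside the proof of \thref{sec-for-coherent}, so the two proofs share the same mathematical content.
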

\begin{proof}
We could prove the statement directly.  But we have already proved a stronger version in the form of \thref{sec-for-coherent}.  We describe how to massage a locally zero-dimensional coherent topos into the form needed by \thref{sec-for-coherent}.

Let $\X$ be the generating set of zero-dimensional objects, and let $\B$ be the set of subobjects of objects in $\X$ that are complemented. Then the identity functor $\id_\E$ is $(\X, \B)$-s.e.c., because being complemented along $\id_\E$ is equivalent to just being complemented. We can thus apply \thref{sec-for-coherent} to see that $\id_\E$ is $(\E_\coh, \E_\comp)$-s.e.c. This precisely means that compact subobjects of coherent objects are complemented. As such subobjects generate $\Sub(X)$ in a coherent topos, we conclude that $X$ is indeed zero-dimensional.
\end{proof}
The relevance of local zero-dimensionality for us is the following.
\begin{proposition}
\thlabel{geometric-morphism-into-locally-zero-dimensional-is-sec}
Any geometric morphism $p: \F \to \E$ into a locally zero-dimensional topos $\E$ is s.e.c. More precisely, if $\X$ is a generating set of zero-dimensional objects in $\E$ then, letting $\B_X$ be the set of complemented subobjects of $X \in \X$, any geometric morphism $p: \F \to \E$ is $(\X, \B)$-s.e.c.
\end{proposition}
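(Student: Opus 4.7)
The plan is essentially to unpack the definitions and observe that the required condition holds for trivial reasons. First I would check that the data $(\X, \B)$ genuinely forms a generating set with subobject basis in the sense of \thref{sec-geometric-morphism}: $\X$ is generating by hypothesis, and for each $X \in \X$, the set $\B_X$ of complemented subobjects of $X$ generates $\Sub(X)$ precisely because $X$ is assumed zero-dimensional (every element of $\Sub(X)$ is a join of complemented ones).

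Next, for the s.e.c.\ condition itself, I would verify that for each complemented $A \leq X$ in $\B_X$, the pseudo-complement $\neg A$ coincides with the (unique) complement $A^c$ of $A$. Indeed, $A \wedge A^c = 0$ gives $A^c \leq \neg A$ by the defining property of pseudo-complements, and therefore $X = A \vee A^c \leq A \vee \neg A \leq X$, so $A \vee \neg A = X$ in $\Sub(X)$ itself. Applying $p^*$ then yields $p^*(A \vee \neg A) = p^*(X)$ immediately, since $p^*$ is a functor between toposes and the identity of $X$ is sent to the identity of $p^*(X)$.

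Hence $p$ is $(\X, \B)$-s.e.c.\ by \thref{sec-geometric-morphism}, and the general statement that $p$ is s.e.c.\ follows from existence of any such witnessing $(\X, \B)$. There is really no substantive obstacle here; the only point that needs any care is recognising that ``complemented in the subobject lattice'' already forces the join with the pseudo-complement to be the whole object, so the pullback $p^*$ does not need to do any work and the condition holds for any geometric morphism into $\E$ whatsoever. This is why local zero-dimensionality of the codomain is a sufficient condition independent of $\F$ and $p$.
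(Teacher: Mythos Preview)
Your proposal is correct and follows essentially the same approach as the paper's proof, which simply notes that any $A \in \B_X$ is complemented in $\Sub(X)$, so $A \vee \neg A = X$ already in $\E$, whence $p^*(A \vee \neg A) = p^*(X)$. You have merely been more explicit about why the complement coincides with the pseudo-complement and why $\B$ is a subobject basis, both of which the paper leaves implicit.
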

\begin{proof}
This amounts to writing out definitions: any subobject $A \leq X$ in $\B_X$ is complemented in $\Sub(X)$, so $A \vee \neg A = X$. Therefore, such a subobject is complemented along any geometric morphism into $\E$.
\end{proof}
\begin{example}\thlabel{ec-but-never-sec}
    We return to the problem of finding an e.c.\ geometric morphism that is not s.e.c\ with respect to any generating set and subobject basis.  Let $X$ be a sober, $T_1$ space that is not locally zero-dimensional and consider the identity geometric morphism $\id_{\Sh(X)}$.  Since $X$ is $T_1$, $\id_X$ is maximal in the specialisation order on endomorphisms on $X$, and so for the same reason as in \thref{ec-sec-examples}(ii), $\id_{\Sh(X)}$ is an e.c.\ geometric morphism.  But to say that $\id_{\Sh(X)}$ is s.e.c.\ (with respect to some generating set and subobject basis) is to say that $\Sh(X)$ is locally zero-dimensional, which is not the case by \thref{sheaves-on-locally-zero-dimensional-locale-are-locally-zero-dimensional}.
\end{example}

\section{The classifying topos of e.c.\ models}
\label{sec:classifying-topos-ec-models}
In this section, we return to the study of the subtopos $\E_{\ec[\F]} \hookrightarrow \E$ of e.c.\ $\F$-points.  We give a characterisation of $\E_{\ec[\F]}$ under the assumption that the e.c.\ morphisms $\F \to \E$ are all $(\X,\B)$-s.e.c.\ for some choice of $(\X,\B)$. Thus, we will recover our motivating case, where $\E$ classifies a coherent theory and $\F = \Set$, but we will also see other examples where $\F$ does not have to be taken as $\Set$.
\begin{theorem}
\thlabel{ec-subtopos-characterisation}
Let $\F$ and $\E$ be any two toposes. Suppose that for some generating set $\X$ of $\E$, with subobject basis $\B$, we have that every e.c.\ geometric morphism $p:  \F \to \E$ is $(\X, \B)$-s.e.c. Then the following are equivalent for a subtopos $\G \hookrightarrow \E$ with enough $\F$-points:
\begin{enumerate}[label=(\roman*)]
\item $\G \simeq \E_{\ec[\F]}$,
\item $\G$ is locally zero-dimensional and $\F$-entwined with $\E$,
\item $\G$ is the maximal $(\X, \B)$-s.e.c.\ subtopos of $\E$ with enough $\F$-points.
\end{enumerate}
\end{theorem}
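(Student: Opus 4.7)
The plan is to establish the equivalences by a cycle of implications (i) $\Rightarrow$ (iii) $\Rightarrow$ (ii) $\Rightarrow$ (i). Throughout, I write $i: \G \hookrightarrow \E$ for the inclusion, and I make repeated use of the standing hypothesis that every e.c.\ morphism $\F \to \E$ is $(\X,\B)$-s.e.c.

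For (i) $\Rightarrow$ (iii), if $\G \simeq \E_{\ec[\F]}$, then the inclusion $i$ is itself e.c.\ (by the unlabelled proposition showing $\E_{\ec[\F]} \hookrightarrow \E$ is e.c.), and hence $(\X,\B)$-s.e.c.\ by hypothesis. To establish maximality, let $\G' \hookrightarrow \E$ be any other $(\X,\B)$-s.e.c.\ subtopos with enough $\F$-points. Each $\F$-point $p: \F \to \G'$ composes with the inclusion $\G' \hookrightarrow \E$ to give a $(\X,\B)$-s.e.c.\ morphism (\thref{sec-stable-under-postcomposition}(i)), which is then e.c.\ by \thref{geometric-morphism-sec-implies-ec} and therefore factors through $\E_{\ec[\F]} \simeq \G$. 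Since $\G'$ has enough $\F$-points, $\G'$ coincides with the union of images of these factorisations, so $\G' \hookrightarrow \G$.

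For (iii) $\Rightarrow$ (ii), I establish local zero-dimensionality and $\F$-entwinement separately. The $(\X,\B)$-s.e.c.\ condition on $i$ says that $i^*(A)$ is complemented in $\Sub_\G(i^*(X))$ for each $A \in \B_X$. Using the standard fact about geometric inclusions that every subobject of $i^*(X)$ in $\G$ is of the form $i^*(B)$ for some $B \leq X$ in $\E$ (cf.\ the discussion in the proof of \thref{entwinement-characterisations}), together with $i^*$ preserving joins of subobjects, the family $\{i^*(A) : A \in \B_X\}$ generates $\Sub_\G(i^*(X))$, so each such subobject frame is zero-dimensional and $\{i^*(X) : X \in \X\}$ exhibits $\G$ as locally zero-dimensional. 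For $\F$-entwinement, taking $\E$ itself as the witnessing topos, consider any $r: \F \to \E$; by \thref{every-geometric-morphism-continues-to-ec}, there is an e.c.\ morphism $q: \F \to \E$ with $\eta: r \Rightarrow q$. By hypothesis $q$ is $(\X,\B)$-s.e.c., so its image $q(\F) \hookrightarrow \E$ is a $(\X,\B)$-s.e.c.\ subtopos (by the argument of \thref{geometric-morphism-sec-iff-factor-through-sec} using \thref{sec-stable-under-postcomposition}(ii)) that trivially has enough $\F$-points; by maximality $q(\F) \hookrightarrow \G$, so $q = iq'$ for some $q': \F \to \G$, and $\eta$ supplies the required natural transformation. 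The reverse direction of entwinement is immediate by taking $r = iq'$.

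For (ii) $\Rightarrow$ (i), since $\G$ is locally zero-dimensional, \thref{geometric-morphism-into-locally-zero-dimensional-is-sec} and \thref{geometric-morphism-sec-implies-ec} together imply that every $\F$-point of $\G$ is e.c.\ as a morphism into $\G$, and therefore factors through $\G_{\ec[\F]}$; the hypothesis that $\G$ has enough $\F$-points then forces $\G \simeq \G_{\ec[\F]}$, and combining this with $\G_{\ec[\F]} \simeq \E_{\ec[\F]}$ from \thref{entwinement-characterisations} closes the cycle. The main obstacle I anticipate is the entwinement step of (iii) $\Rightarrow$ (ii): ensuring that the image of the e.c.\ continuation $q$ actually lies inside $\G$ requires chaining together the hypothesis on e.c.\ morphisms, the careful inheritance of $(\X,\B)$-s.e.c.-ness under image factorisations, and the maximality of $\G$---each of which is individually routine but must be assembled with some care.
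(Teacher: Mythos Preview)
Your overall architecture is sound and close to the paper's, but there is a genuine gap in your (i) $\Rightarrow$ (iii) step. You write that the inclusion $i \colon \E_{\ec[\F]} \hookrightarrow \E$ is e.c.\ ``and hence $(\X,\B)$-s.e.c.\ by hypothesis''. This misapplies the standing hypothesis: it asserts only that every e.c.\ geometric morphism \emph{with domain $\F$} is $(\X,\B)$-s.e.c., and $i$ has domain $\E_{\ec[\F]}$, which need not be $\F$. So the implication as stated does not follow.

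The repair is immediate with ingredients you already use. By construction $\E_{\ec[\F]}$ has enough $\F$-points, and each composite $\F \to \E_{\ec[\F]} \xhookrightarrow{i} \E$ is e.c.\ (indeed, that is how the $\F$-points of $\E_{\ec[\F]}$ arise), hence $(\X,\B)$-s.e.c.\ by the hypothesis. Now \thref{sec-stable-under-postcomposition}(ii), applied to the jointly conservative family of $\F$-points of $\E_{\ec[\F]}$, yields that $i$ itself is $(\X,\B)$-s.e.c. This is exactly the content of \thref{sec-is-ec-then-ec-subtopos-is-sec}, which the paper invokes at this point.

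Beyond this, your proposal is essentially correct. Your cycle (i) $\Rightarrow$ (iii) $\Rightarrow$ (ii) $\Rightarrow$ (i) differs organisationally from the paper, which proves (i) $\Leftrightarrow$ (ii) and (i) $\Leftrightarrow$ (iii) separately; in particular, you give a direct argument for (iii) $\Rightarrow$ (ii) (local zero-dimensionality via \thref{sec-subtopos-is-locally-zero-dimensional}, entwinement via maximality applied to the image of an e.c.\ continuation), whereas the paper routes that implication through (i). Both approaches use the same underlying lemmas and neither is materially shorter.
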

\begin{remark}
We note that, by a simple application of \thref{geometric-morphism-sec-iff-factor-through-sec}, the assumptions of \thref{ec-subtopos-characterisation} are necessary as well as sufficient.  Specifically, if $\E_{\ec[\F]}$ is an $(\X,\B)$-s.e.c.\ subtopos of $\E$, then every e.c.\ geometric morphism $p: \F \to \E$ is $(\X,\B)$-s.e.c.
\end{remark}
In order to prove the theorem, we must show that, under the assumptions of the theorem, $\E_{\ec[\F]}$ is $(\X,\B)$-s.e.c./locally zero-dimensional.  This is a consequence of the following results.
\begin{lemma}
\thlabel{sheafification-surjective-on-subobjects}
Let $i: \F \hookrightarrow \E$ be a geometric embedding. Then the induced map on subobjects $i^*: \Sub(X) \to \Sub(i^*(X))$ is surjective for all $X$ in $\E$.
\end{lemma}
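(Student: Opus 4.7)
The plan is to show that any subobject $B \hookrightarrow i^*(X)$ in $\F$ has an explicit preimage $A \hookrightarrow X$ in $\E$ satisfying $i^*(A) = B$ as subobjects of $i^*(X)$. The natural candidate is
\[
A \;=\; X \times_{i_* i^*(X)} i_*(B),
\]
the pullback of the inclusion $i_*(B) \hookrightarrow i_* i^*(X)$ (a monomorphism since $i_*$, being a right adjoint, preserves monos) along the unit $\eta_X : X \to i_* i^*(X)$ of the adjunction $i^* \dashv i_*$.  Thus $A \hookrightarrow X$ is a subobject in $\E$.

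To verify that $i^*(A) = B$ as subobjects of $i^*(X)$, I would apply the left-exact functor $i^*$ to the defining pullback square, obtaining a pullback
\[
\begin{tikzcd}
i^*(A) \ar{r} \ar[hook]{d} & i^* i_*(B) \ar[hook]{d} \\
i^*(X) \ar{r}{i^*(\eta_X)} & i^* i_* i^*(X).
\end{tikzcd}
\]
Because $i$ is a geometric embedding, $i_*$ is fully faithful, equivalently the counit $\epsilon: i^* i_* \Rightarrow \id_{\F}$ is a natural isomorphism.  Naturality of $\epsilon$ applied to the mono $B \hookrightarrow i^*(X)$ identifies the right-hand column of the square with $B \hookrightarrow i^*(X)$ up to the isomorphisms $\epsilon_{B}$ and $\epsilon_{i^*(X)}$.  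Moreover, the triangle identity $\epsilon_{i^*(X)} \circ i^*(\eta_X) = \id_{i^*(X)}$ transports the bottom edge to $\id_{i^*(X)}$.  Consequently the square exhibits $i^*(A)$ as the pullback of $B \hookrightarrow i^*(X)$ along the identity, i.e.\ as the subobject $B$ itself.

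The argument is entirely formal, resting only on the characterisation of geometric embeddings as those geometric morphisms whose counit is invertible, together with left-exactness of $i^*$.  The only point requiring care is bookkeeping of the counit isomorphisms, so that the conclusion is an equality of subobjects of $i^*(X)$ rather than merely an abstract isomorphism $i^*(A) \cong B$ in $\F$; this is precisely what the triangle identity delivers.
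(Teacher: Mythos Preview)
Your proof is correct and is essentially identical to the paper's own argument: both construct $A$ as the pullback of $i_*(B) \hookrightarrow i_* i^*(X)$ along the unit $\eta_X$, apply the left-exact functor $i^*$, and then use that the counit is an isomorphism (together with the triangle identity) to identify $i^*(A)$ with $B$ as subobjects of $i^*(X)$. The paper phrases the last step as ``taking the transpose'' and then decomposing into two pullback squares, but this amounts to exactly the same bookkeeping with $\varepsilon$ and the triangle identity that you carry out.
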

\begin{proof}
Let $B \leq i^*(X)$ be a subobject.  We first form the following pullback in $\E$:
\[
\begin{tikzcd}
A \arrow[r] \arrow[d, hook] & i_*(B) \arrow[d, hook] \\
X \arrow[r, "\eta_X"]      & i_* i^*(X)               
\end{tikzcd}
\]
where $\eta$ is the unit of the adjunction $i^* \dashv i_*$. Then we take the transpose of that square under this adjunction, which is pictured on the left below, and decomposes as the diagram on the right below:
\[
\begin{tikzcd}
    i^*(A) \ar[hook]{d} \ar{r} & B \ar[hook]{d} \\
    i^*(X) \ar[equal]{r} & i^*(X),
\end{tikzcd}
\quad
\begin{tikzcd}
    i^*(A) \ar{r} \ar[hook]{d} & i^*i_*(B) \ar{r}{\varepsilon_B} \ar[hook]{d} & B \ar[hook]{d} \\
    i^*(X) \ar{r}{i^*(\eta_X)} & i^*i_*i^*(X) \ar{r}{\varepsilon_{i^*(X)}} & i^*(X),
\end{tikzcd}
\]
where $\varepsilon$ is the counit of the adjunction $i^* \dashv i_*$. In the diagram on the right both squares are pullbacks: the left square because it is the image of a pullback under the finite limit preserving functor $i^*$, and the right square because $\varepsilon$ is an isomorphism. The transposed square is thus a pullback, and so the arrow $i^*(A) \to B$ is an isomorphism. We conclude that $i^*(A) = B$ as subobjects of $i^*(X)$.
\end{proof}
\begin{proposition}
\thlabel{sec-subtopos-is-locally-zero-dimensional}
If $i: \F \hookrightarrow \E$ is an s.e.c.\ subtopos, then $\F$ is locally zero-dimensional.
\end{proposition}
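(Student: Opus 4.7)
The plan is to exhibit a generating set of zero-dimensional objects in $\F$, namely the set $\{i^*(X) \mid X \in \X\}$, where $(\X,\B)$ is such that $i$ is $(\X,\B)$-s.e.c.

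First, I would verify that $\{i^*(X) \mid X \in \X\}$ is a generating set for $\F$. Since $i$ is an embedding, $i_*$ is fully faithful, so every object of $\F$ is of the form $i^*(Y)$ for some $Y \in \E$ (namely $Y = i_*(B)$). The object $Y$ is a colimit of a diagram in $\X$, and since $i^*$ preserves colimits, $i^*(Y)$ is correspondingly a colimit of objects of the form $i^*(X)$ with $X \in \X$. Hence $\{i^*(X) \mid X \in \X\}$ generates $\F$.

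Next, I would show that each $i^*(X)$ is zero-dimensional. Pick any subobject $B \leq i^*(X)$ in $\F$. By \thref{sheafification-surjective-on-subobjects}, there is $A \leq X$ in $\E$ with $i^*(A) = B$. Since $\B_X$ generates $\Sub_\E(X)$, we may write $A = \bigvee_{j \in J} A_j$ with each $A_j \in \B_X$. Applying $i^*$, which preserves monomorphisms and colimits (and hence joins of subobjects), yields
\[
B \;=\; i^*(A) \;=\; \bigvee_{j \in J} i^*(A_j)
\]
in $\Sub_\F(i^*(X))$. Since $i$ is $(\X,\B)$-s.e.c., each $A_j \leq X$ is complemented along $i$, i.e.\ $i^*(A_j)$ is complemented in $\Sub_\F(i^*(X))$ (with complement $i^*(\neg A_j)$, as noted in \thref{complemented-along-p-remark}). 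Thus $B$ is a join of complemented subobjects, so $\Sub_\F(i^*(X))$ is zero-dimensional as a locale.

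The argument is essentially bookkeeping once the two ingredients -- \thref{sheafification-surjective-on-subobjects} (which lets us pull every subobject in $\F$ back from $\E$) and the fact that $i^*$ preserves joins of subobjects -- are in place. The one point that must not be overlooked is the interplay between the two notions of join: the join $\bigvee_j A_j$ computed in $\Sub_\E(X)$ maps to the join $\bigvee_j i^*(A_j)$ in $\Sub_\F(i^*(X))$ because $i^*$ is cocontinuous and preserves images, but this is standard. No further obstacle is expected.
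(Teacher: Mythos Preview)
Your proof is correct and follows essentially the same approach as the paper: exhibit $\{i^*(X) \mid X \in \X\}$ as a generating set for $\F$, then use \thref{sheafification-surjective-on-subobjects} together with the fact that $i^*$ preserves joins of subobjects to conclude that $\{i^*(A) \mid A \in \B_X\}$ generates $\Sub(i^*(X))$ by complemented elements. The paper's proof is more terse (it simply asserts that $i^*(\X)$ generates $\F$ rather than spelling out the colimit argument), but the content is identical.
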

\begin{proof}
Let $(\X, \B)$ be such that the inclusion $i: \F \hookrightarrow \E$ is $(\X, \B)$-s.e.c. Then $i^*(\X) = \{i^*(X) \mid X \in \X\}$ is a generating set for $\F$, so it suffices to show that $\Sub(i^*(X))$ is zero-dimensional for each $X \in \X$. For each $A \leq X$ in $\B_X$ we have that $i^*(A)$ is complemented in $\Sub(i^*(X))$. We thus conclude by noting that $\{i^*(A) \mid A \leq X \text{ in } \B_X\}$ generates $\Sub(i^*(X))$, as $i^*$ induces a surjection $\Sub(X) \to \Sub(i^*(X))$ by \thref{sheafification-surjective-on-subobjects} and it preserves joins of subobjects.
\end{proof}
\begin{proposition}
\thlabel{sec-is-ec-then-ec-subtopos-is-sec}
Let $\F$ and $\E$ be any two toposes. Suppose that for some generating set $\X$ of $\E$, with subobject basis $\B$, we have that every e.c.\ geometric morphism $p: \F \to \E$ is $(\X, \B)$-s.e.c. Then $\E_{\ec[\F]} \hookrightarrow \E$ is $(\X, \B)$-s.e.c., and so in particular $\E_{\ec[\F]}$ is locally zero-dimensional.
\end{proposition}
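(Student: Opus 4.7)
The plan is to apply \thref{sec-stable-under-postcomposition}(ii) directly to the subtopos inclusion $i: \E_{\ec[\F]} \hookrightarrow \E$, treating $i$ as the morphism whose $(\X,\B)$-s.e.c.\ status we want to establish. That lemma tells us it suffices to exhibit a jointly conservative family $\{q_\alpha: \F \to \E_{\ec[\F]}\}$ such that each composite $i \circ q_\alpha: \F \to \E$ is $(\X, \B)$-s.e.c.

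The natural candidate is the collection of all geometric morphisms $q: \F \to \E_{\ec[\F]}$ for which the composite $i \circ q$ is e.c.\ as a morphism into $\E$. I would argue this family is jointly conservative essentially by construction: $\E_{\ec[\F]}$ is defined as the join $\bigcup_p p(\F)$ in the lattice of subtoposes of $\E$, indexed over all e.c.\ morphisms $p: \F \to \E$. Each such $p$ factors as $\F \twoheadrightarrow p(\F) \hookrightarrow \E_{\ec[\F]} \hookrightarrow \E$, with $\F \twoheadrightarrow p(\F)$ surjective (hence conservative) and the inclusions $p(\F) \hookrightarrow \E_{\ec[\F]}$ jointly surjective by the defining property of the join. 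This is the same joint-conservativity observation already invoked in the earlier proof that $\E_{\ec[\F]} \hookrightarrow \E$ is e.c. Now, by the standing hypothesis of the proposition, every such composite $i \circ q$, being e.c.\ as a morphism $\F \to \E$, is automatically $(\X, \B)$-s.e.c.

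Assembling these ingredients, \thref{sec-stable-under-postcomposition}(ii) immediately yields that $i: \E_{\ec[\F]} \hookrightarrow \E$ is $(\X, \B)$-s.e.c., which is the main claim. The ``in particular'' statement, namely that $\E_{\ec[\F]}$ is locally zero-dimensional, is then a direct appeal to \thref{sec-subtopos-is-locally-zero-dimensional}, which asserts that any s.e.c.\ subtopos is locally zero-dimensional. I do not anticipate any real obstacle in writing this out: the proposition is essentially a repackaging of machinery already in place, with the supplied hypothesis acting as the precise bridge needed to upgrade ``e.c.'' to ``$(\X, \B)$-s.e.c.'' on the generating family of morphisms into $\E_{\ec[\F]}$.
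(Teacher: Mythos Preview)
Your proposal is correct and follows essentially the same approach as the paper: both apply \thref{sec-stable-under-postcomposition}(ii) to the inclusion $i: \E_{\ec[\F]} \hookrightarrow \E$ using the jointly conservative family of $\F$-points coming from e.c.\ morphisms (your argument is in fact slightly more explicit about which family is used and why it is jointly conservative), and then conclude local zero-dimensionality via \thref{sec-subtopos-is-locally-zero-dimensional}.
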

\begin{proof}
By construction, $\E_{\ec[\F]}$ has enough $\F$-points, and each $\F \to \E_{\ec[\F]} \hookrightarrow \E$ is $(\X, \B)$-s.e.c.\ by assumption. So $\E_{\ec[\F]} \hookrightarrow \E$ is $(\X, \B)$-s.e.c.\ by \thref{sec-stable-under-postcomposition}(ii), and the final remark then follows from \thref{sec-subtopos-is-locally-zero-dimensional}.
\end{proof}

\begin{proof}[Proof of \thref{ec-subtopos-characterisation}]
We first prove (i) $\Leftrightarrow$ (ii). The left to right direction follows from \thref{sec-is-ec-then-ec-subtopos-is-sec,entwinement-characterisations}. For the converse we note that, as a subtopos of itself, $\G = \bigcup \{ p(\F) \mid p: \F \to \G \text{ a geometric morphism}\}$ because $\G$ has enough $\F$-points. As $\G$ is locally zero-dimensional, by \thref{geometric-morphism-into-locally-zero-dimensional-is-sec,geometric-morphism-sec-implies-ec}, every geometric morphism $\F \to \G$ is e.c., and so we have that $\G \simeq \G_{\ec[\F]}$. Finally, since $\G$ and $\E$ are $\F$-entwined, by \thref{entwinement-characterisations} we conclude that $\G \simeq \G_{\ec[\F]} \simeq \E_{\ec[\F]}$.

We prove (i) $\Leftrightarrow$ (iii) by showing that $\E_{\ec[\F]}$ is the maximal $(\X, \B)$-s.e.c.\ subtopos of $\E$ with enough $\F$-points. Firstly, $\E_{\ec[\F]}$ has enough $\F$-points by construction and is $(\X, \B)$-s.e.c.\ by \thref{sec-is-ec-then-ec-subtopos-is-sec}. Let $\G \hookrightarrow \E$ be any $(\X, \B)$-s.e.c.\ subtopos with enough $\F$-points. From being s.e.c.\ it follows from \thref{sec-stable-under-postcomposition,geometric-morphism-sec-implies-ec} that any composite $\F \to \G \hookrightarrow \E$ is e.c. So by construction of $\E_{\ec[\F]}$, any such composite factors through $\E_{\ec[\F]}$. As before, from enough $\F$-points it follows that $\G = \bigcup \{ p(\F) \mid p: \F \to \G \text{ a geometric morphism}\}$, and so we conclude that $\G \hookrightarrow \E$ factors through $\E_{\ec[\F]}$.
\end{proof}
\begin{corollary}
\thlabel{ec-subtopos-characterisation-for-entwined}
Suppose that a topos $\E'$ is $\F$-entwined with a topos $\E$ that satisfies the assumptions of \thref{ec-subtopos-characterisation}. Then for a subtopos $\G \hookrightarrow \E'$ we have $\G \simeq \E'_{\ec[\F]}$ if and only if $\G$ has enough $\F$-points, is locally zero-dimensional and is $\F$-entwined with $\E'$.
\end{corollary}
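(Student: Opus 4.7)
The plan is to reduce everything to \thref{ec-subtopos-characterisation} applied to $\E$, using two key inputs: that $\F$-entwinement is an equivalence relation on toposes (\thref{entwinement-is-equivalence-relation}), and that $\F$-entwined toposes share the same classifying topos of e.c.\ morphisms, i.e.\ $\E_{\ec[\F]} \simeq \E'_{\ec[\F]}$ (\thref{entwinement-characterisations}). Note that, while the hypotheses of \thref{ec-subtopos-characterisation} ensure a generating set $\X$ and subobject basis $\B$ of $\E$, no corresponding data is needed on $\E'$ because only the abstract topos $\E_{\ec[\F]}$ is being transported.

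For the forward direction, suppose $\G \simeq \E'_{\ec[\F]}$. The property of having enough $\F$-points is immediate from the construction of $\E'_{\ec[\F]}$, and $\F$-entwinement with $\E'$ is exactly \thref{E-and-E_Fec-are-entwined}. For local zero-dimensionality, I would observe $\G \simeq \E'_{\ec[\F]} \simeq \E_{\ec[\F]}$ via \thref{entwinement-characterisations}, and then invoke \thref{sec-is-ec-then-ec-subtopos-is-sec}---the one place where the hypothesis on $\E$ is used---to conclude that $\E_{\ec[\F]}$, and hence $\G$, is locally zero-dimensional.

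Conversely, suppose $\G \hookrightarrow \E'$ has enough $\F$-points, is locally zero-dimensional, and is $\F$-entwined with $\E'$. By transitivity of $\F$-entwinement, $\G$ is $\F$-entwined with $\E$, so by \thref{entwinement-characterisations} we get $\G_{\ec[\F]} \simeq \E_{\ec[\F]} \simeq \E'_{\ec[\F]}$. It then suffices to show $\G \simeq \G_{\ec[\F]}$, which is a direct repetition of the argument used in the (ii)$\Rightarrow$(i) part of the proof of \thref{ec-subtopos-characterisation}: local zero-dimensionality of $\G$ combined with \thref{geometric-morphism-into-locally-zero-dimensional-is-sec,geometric-morphism-sec-implies-ec} forces every $p: \F \to \G$ to be e.c., and having enough $\F$-points then gives $\G = \bigcup \{p(\F) \mid p: \F \to \G\} \simeq \G_{\ec[\F]}$. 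There is no real obstacle here---the corollary is essentially a bookkeeping exercise transferring the characterisation of $\E_{\ec[\F]}$ over to $\E'_{\ec[\F]}$ via the equivalence of classifying toposes of e.c.\ morphisms provided by entwinement.
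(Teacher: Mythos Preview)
Your proof is correct and follows essentially the same route as the paper: the paper's proof is the one-liner ``by \thref{entwinement-characterisations} we have $\E'_{\ec[\F]} \simeq \E_{\ec[\F]}$, and so we simply apply \thref{ec-subtopos-characterisation}'', and what you have written is precisely the unpacking of that application (including the implicit use of transitivity of entwinement to swap between $\E$ and $\E'$). Your forward direction cites \thref{E-and-E_Fec-are-entwined} directly rather than routing through the main theorem, but this is a cosmetic difference.
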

\begin{proof}
By \thref{entwinement-characterisations} we have that $\E'_{\ec[\F]} \simeq \E_{\ec[\F]}$, and so we simply apply \thref{ec-subtopos-characterisation}.
\end{proof}
The hypotheses of \thref{ec-subtopos-characterisation} are satisfied in our original motivating case of e.c.\ models in $\Set$ of some coherent theory $T$. We discuss the simplifications of \thref{ec-subtopos-characterisation} that can be made.
\begin{remark}
\thlabel{ec-subtopos-characterisation-coherent}
If $\E$ is a coherent topos, then by \thref{sec-for-coherent} we do not need to worry about the generating set $\X$ and subobject basis $\B$ as these can be taken to be $\E_\coh$ and $\E_\comp$ respectively. So the assumptions of \thref{ec-subtopos-characterisation} simplify to ``every e.c.\ geometric morphism $\F \to \E$ is s.e.c.''. In the conclusion of that theorem, we can now also simplify (iii) to characterise $\E_{\ec[\F]}$ as the maximal s.e.c.-subtopos of $\E$ with enough $\F$-points.
\end{remark}
After \thref{ec-subtopos} we discussed how $\Set[T]_{\ec[\F]}$ is the classifying topos of the common geometric theory $T^{\ec[\F]}$ of the e.c.\ models of $T$ in $\F$. We simplify the notation for the case where $\F = \Set$ and $T$ is coherent.
\begin{definition}
\thlabel{t-ec}
Let $T$ be a coherent theory. We define $T^{\ec}$ to be the common geometric theory of all e.c.\ models of $T$ in $\Set$. That is:
\[
T^{\ec} = \{ \sigma \text{ a geometric sequent} \mid \sigma \text{ is valid in every e.c.\ model of $T$ in $\Set$}\}.
\]
In particular, $T^{\ec}$ axiomatises the class of e.c.\ models of $T$ in $\Set$ (see \thref{geometric-axiomatisation-always-possible}).
\end{definition}
\begin{repeated-theorem}[\thref{ec-subtopos-characterisation-coherent-set}]
Let $T$ be a coherent theory. Then following are equivalent for a subtopos $\G \hookrightarrow \Set[T]$ with enough points:
\begin{enumerate}[label=(\roman*)]
\item $\G \simeq \Set[T^{\ec}]$,
\item $\G$ is locally zero-dimensional and $\Set$-entwined with $\Set[T]$,
\item $\G$ is the maximal s.e.c.-subtopos of $\Set[T]$ with enough points.
\end{enumerate}
\end{repeated-theorem}
\begin{proof}
Note that $\Set[T^{\ec}] \simeq \Set[T]_{\ec[\Set]}$ by the discussion after \thref{ec-subtopos}. Then the statement is just a simplification of \thref{ec-subtopos-characterisation}, following \thref{ec-subtopos-characterisation-coherent} and using \thref{classical-ec-is-sec}, which states that e.c.\ models of $T$ in $\Set$ are s.e.c.
\end{proof}
\subsection{Models with an endomorphism}
Of course, the generality of \thref{ec-subtopos-characterisation} allows for more applications than just the case $\F = \Set$. We finish this section by giving a class of such examples when $\F$ is taken to be the topos $\Set^\N$ or $\Set^\Z$ (where $\N$ and $\Z$ are considered as monoids). In particular, a model of a theory $T$ in $\Set^\N$ (resp.\ $\Set^\Z$) is a pair $(M, \sigma)$ consisting of a $\Set$-model $M$ of $T$ and an endomorphism (resp.\ automorphism) $\sigma: M \to M$ of the model.
\begin{proposition}
\thlabel{automorphism-ec-iff-underlying-ec}
Let $T$ be a coherent theory with the amalgamation property for $\Set$-models, i.e.\ any span of homomorphisms $M_1 \leftarrow M_0 \to M_2$ of $\Set$-models can be completed to a commuting square. Then, for a model $(M, \sigma)$ of $T$ in $\Set^\N$, the following are equivalent:
\begin{enumerate}[label=(\roman*)]
\item $(M, \sigma)$ is e.c.\ in $\Set^\N$,
\item $(M, \sigma)$ is s.e.c.\ in $\Set^\N$,
\item $M$ is e.c.\ is $\Set$,
\item $M$ is s.e.c.\ in $\Set$.
\end{enumerate}
The same is true when $\Set^\N$ is replaced by $\Set^\Z$.
\end{proposition}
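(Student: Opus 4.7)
My plan is to prove the equivalences via the cycle (iv) $\Rightarrow$ (ii) $\Rightarrow$ (i) $\Rightarrow$ (iii) $\Leftrightarrow$ (iv). The horizontal equivalence (iii) $\Leftrightarrow$ (iv) is immediate from \thref{classical-ec-is-sec}, and (ii) $\Rightarrow$ (i) is \thref{geometric-morphism-sec-implies-ec}. So only (iv) $\Rightarrow$ (ii) and (i) $\Rightarrow$ (iii) require argument.

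For (iv) $\Rightarrow$ (ii), since $\Set[T]$ is coherent I would appeal to \thref{sec-for-coherent} and check the s.e.c.\ condition $p^*(\psi \vee \neg \psi) = p^*(\phi)$ for coherent $\phi(x)$ and coherent $\psi(x) \leq \phi(x)$, where $p: \Set^\N \to \Set[T]$ is the geometric morphism corresponding to $(M, \sigma)$. Following the calculation in the proof of \thref{sec-as-geometric-morphism-is-sec-as-model}, the pseudo-complement of $\psi(G_T)$ in $\Sub(\phi(G_T))$ inside $\Set[T]$ is the join of the $\chi(G_T)$ over coherent $\chi$ satisfying $T \vdash \psi \wedge \chi \vdash_x \bot$, and applying $p^*$ turns the required identity into
\[
    \psi(M) \cup \bigcup_\chi \chi(M) = \phi(M),
\]
which is precisely the axiomatisation of s.e.c.\ models from \thref{geometric-axiomatisation-always-possible} applied to $M$. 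Since subobjects of $(\phi(M),\sigma|)$ in $\Set^\N$ are $\sigma$-stable subsets of $\phi(M)$, equality of underlying sets implies equality in $\Set^\N$.

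The substantive step is (i) $\Rightarrow$ (iii). Given a $T$-homomorphism $f: M \to N$ in $\Set$, my plan is to construct a $T$-model $(N', \tau)$ in $\Set^\N$ and a $T$-homomorphism $g: N \to N'$ in $\Set$ such that $h := gf: (M, \sigma) \to (N', \tau)$ is itself a homomorphism in $\Set^\N$, i.e.\ $\tau h = h \sigma$. Granted such a construction, the assumption that $(M, \sigma)$ is e.c.\ in $\Set^\N$ forces $h$ to be an immersion as a natural transformation of geometric morphisms $\Set^\N \to \Set[T]$; by computing pullbacks in $\Set^\N$ pointwise (or invoking \thref{ec-on-generating-is-enough}), this coincides with $h: M \to N'$ being an immersion of $T$-models in $\Set$, whence $f$ is also an immersion.

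To build $(N', \tau)$, I would iterate amalgamation. Set $N_0 = N$ and amalgamate $N \xleftarrow{f} M \xrightarrow{f\sigma} N$ to obtain $N_1$ with $\beta_0, \tau_0: N_0 \to N_1$ satisfying $\tau_0 f = \beta_0 f \sigma$. Then inductively, amalgamate the span $N_n \xleftarrow{\beta_{n-1}} N_{n-1} \xrightarrow{\tau_{n-1}} N_n$ to produce $\beta_n, \tau_n: N_n \to N_{n+1}$ satisfying the coherence relation $\tau_n \beta_{n-1} = \beta_n \tau_{n-1}$. Setting $N' = \colim_n N_n$ along the $\beta_n$, with coprojections $\iota_n: N_n \to N'$, this coherence makes $\{\iota_{n+1} \tau_n\}_n$ a cocone, inducing $\tau: N' \to N'$ with $\tau \iota_n = \iota_{n+1} \tau_n$. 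The base relation then gives $\tau (\iota_0 f) = (\iota_0 f) \sigma$, so taking $g = \iota_0$ and $h = \iota_0 f$ does the job. The main obstacle is the bookkeeping of this tower rather than any deep insight; the construction works because the category of $T$-models admits the relevant directed colimits (as $T$ is coherent). For the $\Set^\Z$ case, the same plan applies but $\tau$ must be an automorphism, which I would arrange by also amalgamating against $\sigma^{-1}$ at alternate steps (or by building the tower $\Z$-indexed in both directions), ensuring that every element of the colimit eventually lies in the image of $\tau$.
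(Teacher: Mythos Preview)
Your proof is correct and follows the same overall cycle as the paper, but the substantive implication (i) $\Rightarrow$ (iii) is handled by a genuinely different mechanism.  The paper does not build an $\omega$-tower: instead it chooses once and for all a homomorphism $f \colon M \to N$ into an $|M|^+$-\emph{homogeneous} e.c.\ model, performs a \emph{single} amalgamation of $N \xleftarrow{\,f\,} M \xrightarrow{f\sigma} N$, uses that both legs into the amalgam are immersions (since $N$ is e.c.) to conclude $\tp(f(m);N)=\tp(f\sigma(m);N)$, and then invokes homogeneity to obtain an automorphism $\tau$ of $N$ with $\tau f = f\sigma$.  Your iterated-amalgamation/colimit construction avoids the model-theoretic input of homogeneous models and types entirely, trading it for a countable tower of amalgamations; it is more elementary and arguably more in the spirit of the surrounding topos-theoretic arguments.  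Conversely, the paper's route is shorter and handles the $\Set^\Z$ case for free, since the $\tau$ produced by homogeneity is already an automorphism, whereas your sketch for $\Set^\Z$ still needs the extra step of inverting the endomorphism $\tau$ on $N'$ (e.g.\ by passing to the colimit of $N' \xrightarrow{\tau} N' \xrightarrow{\tau} \cdots$, on which the induced map is invertible).  For (iv) $\Rightarrow$ (ii) the paper gives the one-line version of your computation: precompose with the surjection $\Set \twoheadrightarrow \Set^\N$ and apply \thref{sec-stable-under-postcomposition}(ii).
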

\begin{proof}
The implications (ii) $\Rightarrow$ (i) and (iii) $\Leftrightarrow$ (iv) follow from \thref{geometric-morphism-sec-implies-ec,classical-ec-is-sec}. Furthermore, (iv) $\Rightarrow$ (ii) easily follows from forgetting $\sigma$, as the interpretation of geometric formulas remains the same (equivalently stated, we apply \thref{sec-stable-under-postcomposition}(ii) with the surjective geometric morphism $\Set \twoheadrightarrow \Set^\N$).

We prove (i) $\Rightarrow$ (iv) by showing that $M$ immerses into a s.e.c.\ $\Set$-model of $T$, which straightforwardly implies that $M$ is s.e.c.\ too.  We note that there exists a homomorphism of models $f:  M \to N$ where $N$ is an $|M|^+$-homogeneous and e.c.\ model of $T$ (and so $N$ is also s.e.c.\ by \thref{classical-ec-is-sec}). Recall that an e.c.\ model $N$ is $\kappa$-homogeneous if for any two tuples $a$ and $b$ of the same length $< \kappa$, we have that $\tp(a; N) = \tp(b; N)$ implies that there is an automorphism $\tau: N \to N$ that sends $a$ to $b$ (pointwise). Here, $\tp(a; N)$ is the model-theoretic notion of (positive) type: the set of all coherent formulas satisfied by $a$ in $N$. Such a homomorphism $f: M \to N$ can be constructed by the usual model-theoretic methods (see e.g.\ \cite[Section 6.1]{tent_course_2012}).  We must show that $f$ is an immersion.

By our assumption on $T$, the span $N \xleftarrow{\, f \, } M \xrightarrow{f \sigma} N$ can be amalgamated to obtain a commuting square of $\Set$-models
\[
\begin{tikzcd}
    N \ar{r}{g} & N^* \\
    M \ar{u}{f} \ar{r}{f \sigma} & N. \ar{u}[']{h}
\end{tikzcd}
\]
Let $m$ be an (infinite) tuple that enumerates the model $M$. As $N$ is e.c., the homomorphisms $g$ and $h$ are immersions, and so they both preserve and reflect the satisfaction of coherent formulas. Therefore, we have that
\[
\tp(f(m); N) =
\tp(gf(m); N^*) =
\tp(hf\sigma(m); N^*) =
\tp(f\sigma(m); N),
\]
and so by homogeneity there is an automorphism $\tau: N \to N$ sending $f(m)$ to $f \sigma(m)$. As $m$ enumerated all of $M$, this is equivalent to saying that $\tau f = f \sigma$.  Now viewing $(N, \tau)$ as a model of $T$ in $\Set^\N$, it follows that $f: (M, \sigma) \to (N, \tau)$ is a homomorphism of models internal to $\Set^\N$. Since $(M,\sigma)$ was assumed to be e.c.\ in $\Set^\N$, this means that $f$ is an immersion as desired, thus concluding the proof that $M$ is s.e.c.

The same proof applies if $\sigma$ is assumed to be an automorphism, and since the $\tau$ we construct is an automorphism, the statement for $\Set^\Z$ also follows.
\end{proof}
\begin{corollary}
    If $T$ is a coherent theory with the amalgamation property for $\Set$-models, the hypotheses of \thref{ec-subtopos-characterisation} are satisfied for $\E = \Set[T]$ and $\F = \Set^\N$ or $\Set^\Z$.
\end{corollary}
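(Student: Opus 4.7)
The plan is to verify directly that the hypothesis of \thref{ec-subtopos-characterisation} is satisfied, namely that there exists a generating set $\X$ and subobject basis $\B$ for $\E = \Set[T]$ such that every e.c.\ geometric morphism $p : \F \to \E$ is $(\X,\B)$-s.e.c., where $\F$ is $\Set^\N$ or $\Set^\Z$. Essentially all the work has been done in \thref{automorphism-ec-iff-underlying-ec}; what remains is assembling the pieces.

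First, since $T$ is coherent, the classifying topos $\E = \Set[T]$ is a coherent topos by \thref{coherent-topos-facts}(i). Appealing to the coherent simplification discussed in \thref{ec-subtopos-characterisation-coherent} (which uses \thref{sec-for-coherent}), it suffices to exhibit the canonical pair $(\E_{\coh}, \E_{\comp})$ as a witness. Hence the hypothesis reduces to showing that every e.c.\ geometric morphism $p : \F \to \E$ is s.e.c.\ (without specifying a generating set).

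Second, I would invoke the defining equivalence $\TMod[T](\F) \simeq \Topos(\F, \Set[T])$. Under this equivalence, a geometric morphism $p : \F \to \Set[T]$ corresponds to a model of $T$ in $\F$, which, for $\F = \Set^\N$ (resp.\ $\Set^\Z$) is a pair $(M, \sigma)$ consisting of a $\Set$-model $M$ of $T$ equipped with an endomorphism (resp.\ automorphism) $\sigma$. Under this correspondence, $p$ being e.c.\ (resp.\ s.e.c.) as a geometric morphism matches the notion of $(M,\sigma)$ being e.c.\ (resp.\ s.e.c.) used in the statement of \thref{automorphism-ec-iff-underlying-ec}.

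Third, I apply \thref{automorphism-ec-iff-underlying-ec}: assuming $T$ has the amalgamation property for $\Set$-models, every e.c.\ model $(M,\sigma)$ in $\Set^\N$ (resp.\ $\Set^\Z$) is also s.e.c. Translating back via the equivalence, every e.c.\ geometric morphism $\F \to \E$ is s.e.c., and hence is $(\E_\coh, \E_\comp)$-s.e.c.\ by \thref{sec-for-coherent}. This is precisely the hypothesis of \thref{ec-subtopos-characterisation}, completing the proof. There is no significant obstacle here since the real content lies in the amalgamation-plus-homogeneity argument already carried out in \thref{automorphism-ec-iff-underlying-ec}; the corollary is simply a translation and a direct appeal to the coherent-case simplification.
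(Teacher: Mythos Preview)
Your proposal is correct and follows exactly the intended approach: the paper states this corollary without proof, treating it as immediate from \thref{automorphism-ec-iff-underlying-ec} together with the coherent simplification in \thref{ec-subtopos-characterisation-coherent}. You have simply spelled out the translation between geometric morphisms and models in $\Set^\N$/$\Set^\Z$ and the appeal to \thref{sec-for-coherent}, which is precisely what the paper leaves implicit.
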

\begin{example}
\thlabel{automorphism-examples}
We give some examples of coherent theories with the amalgamation property for $\Set$-models.
\begin{enumerate}[label=(\roman*)]
\item The theory of fields has the amalgamation property. It follows that a difference field (i.e., a field with an endomorphism) is e.c.\ in $\Set^\N$ (or $\Set^\Z$) if and only if the underlying field is algebraically closed (see \thref{classical-ec-examples}(ii)). Difference fields have been studied model-theoretically before by adding the endomorphism to the language \cite{macintyre-generic-automorphism-1997,chatzidakis-difference-fields-1999}. Note that the e.c.\ models of a field with an endomorphism in the language differ from the e.c.\ models of fields in $\Set^\N$.  This is because, in the former case, since the endomorphism forms part of the language, an e.c.\ model also needs to contain solutions to certain difference polynomials (i.e., polynomials where the endomorphism can be applied to the variables, called \emph{basic systems} in \cite{macintyre-generic-automorphism-1997}).
\item The theory of algebraically closed exponential fields. These are algebraically closed fields with a group homomorphism from the additive group to the multiplicative group, called the exponential map. This is clearly coherently axiomatisable, and any span of such fields can be amalgamated by \cite[Theorem 4.3]{haykazyan_existentially_2021}. The requirement that the underlying field is algebraically closed is important for the amalgamation property. If we would just consider exponential fields then the amalgamation property fails in a strong sense: for any exponential field $F_0$ whose underlying field is not algebraically closed, we can find a span $F_1 \leftarrow F_0 \to F_2$ of exponential fields that cannot be amalgamated (this is also \cite[Theorem 4.3]{haykazyan_existentially_2021}).
\item The theory of decidable graphs, see also \thref{classical-ec-examples}(iii), has the amalgamation property, given by the pushout of graphs. If we remove the requirement that the graphs are decidable we still have the amalgamation property, but the theory trivialises from the point of view of e.c.\ models: the only e.c.\ model is the graph with one vertex (i.e., the terminal object in the category of graphs).
\end{enumerate}
\end{example}

\section{Atomic e.c.\ models and double negation sheaves}
\label{sec:atomic-ec-models-and-double-negation-sheaves}
As an application of the above, we replicate \cite[Theorem 6]{blass_classifying_1983}, using a different proof, to characterise those points on a coherent topos that factorise through the double negation sheaves on that topos (\thref{points-factorising-through-double-negation}). It turns out, these are precisely the e.c.\ models that are \emph{atomic} (see \thref{atomic}).
\subsection{Denseness and e.c.\ models}
The topos of double negation sheaves $\Sh_{\neg \neg}(\E)$ is the smallest dense subtopos of $\E$ (see \cite[Corollary A4.5.20]{johnstone_sketches_2002}). Therefore, we first relate the notion of a dense subtopos to our ongoing study of the subtopos $\E_{\ec[\F]} \hookrightarrow \E$ of e.c.\ $\F$-points.
\begin{definition}
\thlabel{dense-subtopos}
Recall that a subtopos $i: \F \hookrightarrow \E$ is called \emph{dense} if $i^*(X) \cong 0$ implies $X \cong 0$ for all $X \in \E$.
\end{definition}
\begin{lemma}
\thlabel{dense-pseudo-negation}
If $i: \F \hookrightarrow \E$ is dense, then $i^*$ preserves pseudo-complements. That is, for any subobject $A \leq X$ in $\E$ we have $i^*(\neg A) = \neg i^*(A)$ in $\Sub(i^*(X))$.
\end{lemma}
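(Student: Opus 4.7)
The plan is to prove the two inequalities $i^*(\neg A) \leq \neg i^*(A)$ and $\neg i^*(A) \leq i^*(\neg A)$ separately in $\Sub(i^*(X))$.

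The first inequality is essentially free. Since $i^*$ preserves finite limits, $i^*(\neg A) \wedge i^*(A) = i^*(\neg A \wedge A) = i^*(0) = 0$, and by the universal property of the pseudo-complement in $\Sub(i^*(X))$ this forces $i^*(\neg A) \leq \neg i^*(A)$. This is exactly the inclusion already observed in \thref{complemented-along-p-remark}, and it does not require denseness.

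The reverse inequality is where the hypothesis enters, and the key is to pull $\neg i^*(A)$ back along $i^*$ before applying denseness. By \thref{sheafification-surjective-on-subobjects}, the map $i^*: \Sub(X) \to \Sub(i^*(X))$ is surjective, so choose some $C \leq X$ with $i^*(C) = \neg i^*(A)$. Then
\[
i^*(C \wedge A) = i^*(C) \wedge i^*(A) = \neg i^*(A) \wedge i^*(A) = 0.
\]
Since $i$ is dense, $i^*(C \wedge A) \cong 0$ implies $C \wedge A \cong 0$ in $\E$, and by the universal property of $\neg A$ in $\Sub(X)$, this gives $C \leq \neg A$. Applying $i^*$ yields $\neg i^*(A) = i^*(C) \leq i^*(\neg A)$, completing the proof.

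The only step that could plausibly cause trouble is the translation from $i^*(C \wedge A) = 0$ to $C \wedge A = 0$; but this is the defining feature of a dense subtopos (\thref{dense-subtopos}). Everything else is formal manipulation using the finite-limit preservation of $i^*$ and the surjectivity of $i^*$ on subobjects from \thref{sheafification-surjective-on-subobjects}.
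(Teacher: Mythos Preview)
Your proof is correct and essentially the same as the paper's: both hinge on the surjectivity of $i^*$ on subobjects (\thref{sheafification-surjective-on-subobjects}) together with denseness to conclude that $i^*(B)\wedge i^*(A)=0$ iff $B\wedge A=0$. The only cosmetic difference is that the paper computes both pseudo-complements directly as joins and compares the indexing sets, whereas you pick a single preimage $C$ of $\neg i^*(A)$ and argue via two inequalities.
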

\begin{proof}
We can explicitly compute $\neg A$ as $\bigvee \{ B \leq X \mid B \wedge A \cong 0 \}$. Similarly, using that $i^*: \Sub(X) \to \Sub(i^*(X))$ is surjective (\thref{sheafification-surjective-on-subobjects}), we have that $\neg i^*(A) = \bigvee \{i^*(B) \mid B \leq X \text{ and } i^*(B) \wedge i^*(A) \cong 0 \}$. We deduce the result because $i^*(B) \wedge i^*(A) \cong i^*(B \wedge A) \cong 0$ if and only if $B \wedge A \cong 0$, by denseness.
\end{proof}
\begin{proposition}
\thlabel{dense-simplifies-sec}
Let $\E$ be any topos and let $\X$ be a generating set for $\E$ with subobject basis $\B$. Then a dense subtopos of $\E$ is $(\X, \B)$-s.e.c.\ if and only if $i^*(A)$ is complemented in $\Sub(i^*(X))$ for every subobject $A \leq X$ in $\B_X$.
\end{proposition}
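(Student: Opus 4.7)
The plan is to reduce the statement to a direct application of \thref{dense-pseudo-negation}, by unwinding what being $(\X,\B)$-s.e.c.\ says in the presence of denseness.

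The forward implication is immediate from \thref{complemented-along-p-remark}: if $i$ is $(\X,\B)$-s.e.c., then for every $A \leq X$ in $\B_X$ the subobject $i^*(A)$ is complemented in $\Sub(i^*(X))$, regardless of denseness.

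For the reverse direction, I would fix $A \leq X$ in $\B_X$ and show $i^*(A \lor \neg A) = i^*(X)$. Using that $i^*$ preserves joins, I rewrite the left-hand side as $i^*(A) \lor i^*(\neg A)$. Here denseness enters: by \thref{dense-pseudo-negation}, $i^*(\neg A) = \neg i^*(A)$. Now by hypothesis $i^*(A)$ has \emph{some} complement $C$ in $\Sub(i^*(X))$. A standard fact about Heyting algebras is that when a complement exists it coincides with the pseudo-complement, so $C = \neg i^*(A) = i^*(\neg A)$, and therefore $i^*(A) \lor i^*(\neg A) = i^*(A) \lor C = i^*(X)$, as required.

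The main obstacle is essentially bookkeeping: ensuring the pseudo-complement computed in $\Sub(i^*(X))$ genuinely matches $i^*(\neg A)$, which is precisely what \thref{dense-pseudo-negation} provides. Once that identification is made, the equivalence is formal.
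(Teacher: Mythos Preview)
Your proof is correct and follows essentially the same approach as the paper: both use \thref{complemented-along-p-remark} to identify ``complemented along $i$'' with the conjunction of ``$i^*(A)$ is complemented'' and ``$i^*(\neg A) = \neg i^*(A)$'', and then invoke \thref{dense-pseudo-negation} to see that denseness makes the second condition automatic. The paper simply packages both directions at once via this equivalence, whereas you spell out the reverse implication a little more explicitly, but the content is the same.
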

\begin{proof}
Recall from \thref{complemented-along-p-remark} that a subobject $A \leq X$ in $\E$ being complemented along $i$ is equivalent to $i^*(A)$ being complemented in $\Sub(i^*(X))$ and $i^*(\neg A) = \neg i^*(A)$. The statement thus follows from \thref{dense-pseudo-negation}.
\end{proof}
\begin{proposition}
\thlabel{ec-subtopos-dense}
If a topos $\E$ has enough $\F$-points, then $i: \E_{\F\text{-}\ec} \hookrightarrow \E$ is dense.
\end{proposition}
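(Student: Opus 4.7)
The plan is to directly verify the defining condition of a dense subtopos: given any $X \in \E$ with $i^*(X) \cong 0$, one must show $X \cong 0$. Since $\E$ has enough $\F$-points, there is a jointly conservative family of inverse image functors $\{p^* : \E \to \F\}$ indexed by the geometric morphisms $p:\F \to \E$. It therefore suffices to verify that $p^*(X) \cong 0$ for each such $p$.

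The key input is \thref{every-geometric-morphism-continues-to-ec}: for an arbitrary geometric morphism $p: \F \to \E$, there exists an e.c.\ geometric morphism $q: \F \to \E$ together with a natural transformation $\eta : p \Rightarrow q$. By construction of $\E_{\ec[\F]}$ as the join of the images $q(\F)$ of all e.c.\ morphisms $q: \F \to \E$, the e.c.\ morphism $q$ factors through $i$, that is, $q \simeq i \circ q'$ for some $q' : \F \to \E_{\ec[\F]}$. Consequently
\[
q^*(X) \;\cong\; q'^* i^*(X) \;\cong\; q'^*(0) \;\cong\; 0.
\]

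The component $\eta_X : p^*(X) \to q^*(X) \cong 0$ is then an arrow into the initial object in the topos $\F$. Since $0$ is strict initial in any topos, this forces $p^*(X) \cong 0$. As $p$ was arbitrary and the family $\{p^* : \E \to \F\}$ is jointly conservative, the unique arrow $0 \to X$ is an isomorphism, i.e.\ $X \cong 0$, as required.

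There is no real obstacle here beyond correctly invoking \thref{every-geometric-morphism-continues-to-ec} and the strict initiality of $0$ in a topos; these two facts, together with the fact that $q$ automatically factors through $\E_{\ec[\F]}$ by definition of the latter, do all the work.
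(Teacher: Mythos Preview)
Your argument is correct and essentially identical to the paper's own proof: both reduce to showing $p^*(X)\cong 0$ for every $p:\F\to\E$ via enough $\F$-points, invoke \thref{every-geometric-morphism-continues-to-ec} to obtain an e.c.\ $q$ with $\eta:p\Rightarrow q$, observe $q^*(X)\cong 0$ since $q$ factors through $i$, and conclude using strict initiality of $0$.
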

\begin{proof}
Let $X$ be any object in $\E$ and suppose that $i^*(X) \cong 0$.  We must show that $X \cong 0$. Since $\E$ has enough $\F$-points, it suffices to show that $p^*(X) \cong 0$ for each geometric morphism $p: \F \to \E$. By \thref{every-geometric-morphism-continues-to-ec}, there is a natural transformation $\eta: p \Rightarrow q$ where $q$ is an e.c.\ geometric morphism, which therefore factors through $i$, and so $q^*(X) \cong 0$. Then there is an arrow $\eta_X: p^*(X) \to q^*(X) \cong 0$. As the initial object in a topos is strict (see \cite[Lemma A1.4.1]{johnstone_sketches_2002}), this implies that $p^*(X) \cong 0$.
\end{proof}
\begin{corollary}
\thlabel{ec-subtopos-dense-coherent-set}
For a coherent theory $T$, the inclusion $\Set[T^{\ec}] \hookrightarrow \Set[T]$ is dense.
\end{corollary}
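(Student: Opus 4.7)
The plan is to deduce this corollary as a direct application of \thref{ec-subtopos-dense} to the specific setting $\E = \Set[T]$ and $\F = \Set$, so there is essentially no new work — only the verification that the hypotheses are met and a translation between notations.

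First, I would identify $\Set[T^{\ec}]$ with the subtopos $\Set[T]_{\ec[\Set]} \hookrightarrow \Set[T]$. This was already observed in the discussion following \thref{ec-subtopos} (and invoked in the proof of \thref{ec-subtopos-characterisation-coherent-set}): by construction, $\Set[T]_{\ec[\Set]}$ classifies the common geometric theory of all e.c.\ $\Set$-models of $T$, which is precisely $T^{\ec}$ as defined in \thref{t-ec}.

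Second, I would check that $\Set[T]$ has enough $\Set$-points. Since $T$ is a coherent theory, $\Set[T]$ is a coherent topos by \thref{coherent-topos-facts}(i), and every coherent topos has enough points by \thref{coherent-topos-facts}(iii).

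With both hypotheses verified, \thref{ec-subtopos-dense} (applied with $\F = \Set$ and $\E = \Set[T]$) immediately yields that the inclusion $\Set[T]_{\ec[\Set]} \hookrightarrow \Set[T]$ is dense; under the identification above, this is exactly the statement that $\Set[T^{\ec}] \hookrightarrow \Set[T]$ is dense. There is no real obstacle here — the work has already been done in \thref{ec-subtopos-dense} (whose key ingredient is \thref{every-geometric-morphism-continues-to-ec}, guaranteeing that every $\Set$-point factors, up to a natural transformation, through an e.c.\ one, combined with strictness of the initial object in a topos).
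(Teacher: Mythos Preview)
Your proposal is correct and follows exactly the same approach as the paper: apply \thref{ec-subtopos-dense} with $\E = \Set[T]$ and $\F = \Set$, using that $\Set[T]$ has enough points since $T$ is coherent. The paper's proof is just a terser version of what you wrote.
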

\begin{proof}
\thref{ec-subtopos-dense} applies, because $T$ is coherent, so $\Set[T]$ has enough points.
\end{proof}
\begin{remark}
\thlabel{t-ec-same-universal-consequences}
By viewing an arbitrary topos $\E$ as the classifying topos $\Set[T]$ of some geometric theory $T$, we can view a subtopos $\F \hookrightarrow \E$ as the classifying topos $\Set[T']$ of some \emph{quotient theory} $T'$ of $T$ \cite[Theorem 3.2.5]{caramello_theories_2017}. In particular, following \cite[Section 4.2.4]{caramello_theories_2017}, a subtopos $\Set[T'] \hookrightarrow \Set[T]$ is dense if and only if, for every regular formula $\phi(x)$, we have that the sequent $\phi(x) \vdash_x \bot$ (i.e.\ $\forall x \neg \phi(x)$) is derivable from $T$ if and only if it is derivable from $T'$.

Let now $T$ be a coherent theory. By combining the above with \thref{ec-subtopos-dense-coherent-set} we arrive at the well-known fact that the common universal theory of e.c.\ models of $T$ in $\Set$ is the same as the universal consequences of $T$. That is, for any regular $\phi(x)$, we have that $\forall x \neg \phi(x)$ holds in every e.c.\ model of $T$ in $\Set$ if and only if $\forall x \neg \phi(x)$ can be derived from $T$.
\end{remark}
\subsection{Atomic e.c.\ models}
We now turn to replicating \cite[Theorem 6]{blass_classifying_1983}.
\begin{definition}
\thlabel{atomic}
We call a model $M$ in $\Set$ of a (coherent) theory $T$ \emph{atomic} if each finite tuple in $M$ is \emph{supported}, i.e.\ for each $a \in M$, there is a geometric formula $\phi(x)$ with $M \models \phi(a)$ such that, for each geometric formula $\psi(x)$ with $M \models \psi(a)$, we have that $\phi(x) \vdash_x \psi(x)$ is derivable from $T$. We say that $\phi(x)$ \emph{supports} $a$.
\end{definition}
We could have equivalently restricted ourselves to regular formulas in \thref{atomic} for both $\phi(x)$ and $\psi(x)$, because every geometric formula is equivalent to a disjunction of regular formulas.
\begin{remark}
    We also briefly note that what we call ``supported'' is called ``isolated'' in classical model theory, because it means exactly that the \emph{type} of $a$ is an isolated point in the space of types. However, in positive model theory this correspondonce no longer holds, and so ``isolated'' would not be a good term. See also the discussion after \cite[Definition 5.1]{kamsma_bilinear_2023}.
\end{remark}
\begin{remark}
In \cite[Pages 135--136]{blass_classifying_1983} the notion in \thref{atomic} is called ``strongly atomic''. As is explained there, this is to distinguish from the notion where we replace ``$\phi(x) \vdash_x \psi(x)$ is derivable from $T$'' with ``$\phi(x) \vdash_x \psi(x)$ is valid in $M$''. We chose this terminology because it coincides with modern terminology \cite{haykazyan_spaces_2019, kamsma_bilinear_2023,caramello_atomic_2012}. Even though \cite[Definition 5.4]{kamsma_bilinear_2023} focuses only on coherent logic, \thref{atomic} is still equivalent, because we may restrict to regular formulas, as mentioned above. Our notion of atomic is the same as that from \cite[Definition 6.1]{haykazyan_spaces_2019}, as is explained in \cite[Definition 5.2]{kamsma_bilinear_2023}. If we add the requirement that the model is e.c., then our notion coincides with \cite[Definition 3.15]{caramello_atomic_2012}, at least for coherent theories, because the requirement there that every type realised in $M$ is ``complete'' is equivalent to saying that $M$ is e.c.
\end{remark}
\begin{example}
\thlabel{atomic-not-ec}
Being atomic alone does not imply being e.c. For example, let $T$ be the empty theory in the empty language. Then its models in $\Set$ are pure sets, and the singletons are e.c.\ models. However, every non-empty set is an atomic model. For a tuple $a$ in some non-empty set $M$ this is witnessed by the formula $\phi$, which is a conjunction of all the equalities between the singletons in $a$, together with $\exists x(x=x)$.
\end{example}
\begin{theorem}
\thlabel{points-factorising-through-double-negation}
Let $T$ be a coherent theory. Then a point $p: \Set \to \Set[T]$ factors through the double negation subtopos $\Sh_{\neg \neg}(\Set[T])$ if and only if the model $M$ in $\Set$ corresponding to $p$ is an atomic e.c.\ model (atomic with respect to $T^{\ec}$).
\end{theorem}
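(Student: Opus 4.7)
The plan is to reduce the statement to a question about the subtopos $\Set[T^{\ec}] \hookrightarrow \Set[T]$, and then exploit the zero-dimensionality of subobject lattices of coherent objects there. Since $\Set[T^{\ec}] \hookrightarrow \Set[T]$ is dense by \thref{ec-subtopos-dense-coherent-set}, and $\Sh_{\neg\neg}$ always denotes the smallest dense subtopos, one has $\Sh_{\neg\neg}(\Set[T]) \simeq \Sh_{\neg\neg}(\Set[T^{\ec}])$ inside either ambient topos. Consequently, $p$ factors through $\Sh_{\neg\neg}(\Set[T])$ if and only if $M$ is e.c.\ (so that $p$ factors as $\Set \xrightarrow{p'} \Set[T^{\ec}] \hookrightarrow \Set[T]$) and $p'$ itself factors through $\Sh_{\neg\neg}(\Set[T^{\ec}])$. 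By \thref{sec-is-ec-then-ec-subtopos-is-sec}, coherent subobjects $\psi(G_{T^{\ec}}) \leq \phi(G_{T^{\ec}})$ are complemented, and in particular $\neg\neg \psi(G_{T^{\ec}}) = \psi(G_{T^{\ec}})$ inside $\Sub(X)$ for any coherent $X$.

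For the forward direction, given a tuple $a \in M$ of sort $X$, set $\Phi := \{\psi \text{ coherent} \mid M \models \psi(a)\}$. I would form the subobjects
\[
    S := \bigwedge_{\psi \in \Phi} \psi(G_{T^{\ec}}),
    \quad
    N := \bigvee_{\psi \in \Phi} \neg \psi(G_{T^{\ec}})
\]
of $X$ in $\Set[T^{\ec}]$, and argue that $S \vee N$ is $\neg\neg$-dense: any $Y \leq X$ with $Y \wedge N = 0$ satisfies $Y \leq \neg\neg \psi = \psi$ for every $\psi \in \Phi$, hence $Y \leq S$, so $Y \wedge (S \vee N) = 0$ forces $Y = 0$. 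Since $p'$ inverts $\neg\neg$-dense monos, $a \in (p')^*(S) \cup (p')^*(N)$. The alternative $a \in (p')^*(N)$ is impossible, as it would entail $M \models \neg \psi(a)$ for some $\psi \in \Phi$. So $a \in (p')^*(S)$; writing $S = \bigvee_j \chi_j(G_{T^{\ec}})$ as a join of coherent subobjects (which generate $\Sub(X)$), $a$ lies in some $\chi_j(M)$, and this $\chi_j$ supports $a$ since by construction $T^{\ec} \vdash \chi_j \vdash_x \psi$ for every $\psi \in \Phi$.

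For the backward direction, assume $M$ is atomic and e.c.; I must show $(p')^*$ inverts every $\neg\neg$-dense monomorphism in $\Set[T^{\ec}]$. By a covering argument using that coherent objects generate $\Set[T^{\ec}]$ and $(p')^*$ preserves colimits, it suffices to treat $\neg\neg$-dense $A \leq \phi(G_{T^{\ec}})$ with $\phi$ coherent; write $A = \bigvee_j \chi_j(G_{T^{\ec}})$ with each $\chi_j$ coherent. For $a \in \phi(M)$, choose $\phi_0$ supporting $a$ with (after replacing $\phi_0$ by $\phi_0 \wedge \phi$) $T^{\ec} \vdash \phi_0 \vdash_x \phi$. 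The key intermediate observation is that $\phi_0$ is \emph{complete} modulo $T^{\ec}$: for each coherent $\psi$, either $M \models \psi(a)$, giving $T^{\ec} \vdash \phi_0 \vdash_x \psi$ by support, or $M \models \neg \psi(a)$ (since $M$ is s.e.c.), providing some coherent $\chi \in \tp(a;M)$ with $T^{\ec} \vdash \chi \wedge \psi \vdash_x \bot$, whence $T^{\ec} \vdash \phi_0 \wedge \psi \vdash_x \bot$ by support. Since $\phi_0(G_{T^{\ec}}) \neq 0$ and $A$ is $\neg\neg$-dense, $\phi_0(G_{T^{\ec}}) \wedge A \neq 0$, so some $(\phi_0 \wedge \chi_j)(G_{T^{\ec}}) \neq 0$; by completeness $T^{\ec} \vdash \phi_0 \vdash_x \chi_j$, placing $a \in \chi_j(M) \subseteq (p')^*(A)$.

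The main obstacle I anticipate is the forward direction's construction and density verification of $S \vee N$: this crucially uses zero-dimensionality of $\Sub(X)$ inside $\Set[T^{\ec}]$ to conclude $\neg\neg \psi = \psi$ for coherent $\psi$. The completeness of the supporting formula in the backward direction is another delicate step, hinging on the interplay between the support property and $M$ being s.e.c.
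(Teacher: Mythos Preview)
Your proof is correct, but it takes a genuinely different route from the paper's. The paper works entirely inside $\Set[T]$ and invokes the external characterisation of sub-open geometric morphisms (\cite[Proposition 3.6]{johnstone_open_1980}): a geometric morphism from a Boolean topos factors through $\Sh_{\neg\neg}(\E)$ if and only if it is sub-open, i.e.\ its inverse image preserves the Heyting implication on subobject lattices. For the forward direction, the paper uses that sub-open morphisms preserve arbitrary meets of subobjects, and simply identifies the support of $a$ as the geometric formula representing $\bigwedge\{\psi(G_T)\mid M\models\psi(a)\}$. For the converse, it verifies preservation of the Heyting implication $\phi(G_T)\rightarrow\psi(G_T)$ by an elementary case split, using s.e.c.\ and the supporting formula $\xi$ of $a$.

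By contrast, you first reduce to $\Set[T^{\ec}]$ via denseness (so that $\Sh_{\neg\neg}(\Set[T])\simeq\Sh_{\neg\neg}(\Set[T^{\ec}])$), and then use the characterisation of factoring through $\Sh_{\neg\neg}$ as inverting $\neg\neg$-dense monomorphisms. This lets you exploit the paper's earlier machinery---in particular that $i^*(\psi(G_T))$ is complemented in $\Set[T^{\ec}]$ for coherent $\psi$---to build the explicit $\neg\neg$-dense subobject $S\vee N$ in the forward direction, and to argue via completeness of the supporting formula in the backward direction. Your approach is more self-contained relative to the paper (it avoids citing the sub-open criterion) and makes the role of local zero-dimensionality of $\Set[T^{\ec}]$ explicit; the paper's approach is shorter and yields the slightly sharper observation that the support in the forward direction can be taken to satisfy $T\vdash\phi\vdash_x\psi$, not merely $T^{\ec}\vdash\phi\vdash_x\psi$. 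One terminological caveat: $\Set[T^{\ec}]$ need not be a coherent topos, so your phrase ``coherent objects generate $\Set[T^{\ec}]$'' should be read as ``the images $i^*(\phi(G_T))$ for coherent $\phi$ generate $\Set[T^{\ec}]$'', but your covering argument goes through with that reading.
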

One step in the proof immediately follows from \thref{dense-simplifies-sec} above:
\begin{lemma}
\thlabel{factor-through-double-negation-is-ec}
    If $p: \Set \to \Set[T]$ factors through $\Sh_{\neg \neg}(\Set[T])$, then the corresponding model $M$ is e.c.
\end{lemma}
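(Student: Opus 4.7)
The plan is to use \thref{dense-simplifies-sec} in combination with the well-known fact that the double negation subtopos is Boolean. First I would recall that $\Sh_{\neg\neg}(\Set[T])$ is a dense subtopos of $\Set[T]$, and that every subobject lattice in $\Sh_{\neg\neg}(\Set[T])$ is a Boolean algebra (since $\Sh_{\neg\neg}$ of any topos is Boolean). In particular, for any choice of generating set $\X$ and subobject basis $\B$ for $\Set[T]$, and any subobject $A \leq X$ in $\B_X$, the subobject $i^*(A)$ is automatically complemented in $\Sub(i^*(X))$.

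Next I would apply \thref{dense-simplifies-sec} to the dense inclusion $i: \Sh_{\neg\neg}(\Set[T]) \hookrightarrow \Set[T]$: since $i$ is dense and every $i^*(A)$ is complemented, $i$ is $(\X, \B)$-s.e.c. A convenient canonical choice would be $(\E_\coh, \E_\comp)$, which is permissible because $\Set[T]$ is coherent, but any choice works. So $i$ is s.e.c.

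Assuming $p$ factors through $\Sh_{\neg\neg}(\Set[T])$, we may write $p$ as a composite $\Set \to \Sh_{\neg\neg}(\Set[T]) \xhookrightarrow{i} \Set[T]$. By \thref{sec-stable-under-postcomposition}(i), postcomposing with the s.e.c.\ geometric morphism $i$ yields an s.e.c.\ geometric morphism, so $p$ is s.e.c. Then \thref{geometric-morphism-sec-implies-ec} tells us that $p$ is e.c., and finally \thref{ec-as-model-is-ec-as-point} translates this into the statement that the corresponding model $M$ is e.c.\ in the classical sense. There is no real obstacle here: the argument is essentially a chaining together of earlier results, with the only substantive observation being the Booleanness of $\Sh_{\neg\neg}$ that feeds into \thref{dense-simplifies-sec}.
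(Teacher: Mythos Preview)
Your proposal is correct and follows essentially the same route as the paper: observe that $\Sh_{\neg\neg}(\Set[T]) \hookrightarrow \Set[T]$ is dense and Boolean, invoke \thref{dense-simplifies-sec} to conclude it is an s.e.c.\ subtopos, and then deduce that $p$ is s.e.c.\ (hence e.c.). The paper cites \thref{geometric-morphism-sec-iff-factor-through-sec} for the last step where you use \thref{sec-stable-under-postcomposition}(i), but the former is proved via the latter, so the arguments are the same.
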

\begin{proof}
    Since $\Sh_{\neg \neg}(\Set[T]) \hookrightarrow \Set[T]$ is a dense, Boolean subtopos, by \thref{dense-simplifies-sec}, it is an s.e.c.\ subtopos.  The result now follows by \thref{geometric-morphism-sec-iff-factor-through-sec,geometric-morphism-sec-implies-ec}.
\end{proof}
The remainder of the proof is based on a useful fact (\thref{sub-open-factorisation}) about sub-open geometric morphisms, for which we use the characterisation in \cite[Lemma 1.7]{johnstone_open_1980} as a definition.
\begin{definition}
\thlabel{sub-open}
Recall from \cite[Proposition V.1.1]{joyal-tierney-1984} that a frame homomorphism $h: X \to Y$ is called \emph{open} if the following equivalent conditions hold:
\begin{enumerate}[label=(\roman*)]
    \item $h$ preserves the Heyting implication,
    \item $h$ is a homomorphism of complete Heyting algebras,
    \item $h$ has a left adjoint satisfying the \emph{Frobenius identity}.
\end{enumerate}
A geometric morphism $f: \F \to \E$ is then called \emph{sub-open} if for every object $X$ in $\X$ the induced frame homomorphism $f^*: \Sub(X) \to \Sub(f^*(X))$ is open.
\end{definition}
\begin{fact}[{\cite[Proposition 3.6]{johnstone_open_1980}}]
\thlabel{sub-open-factorisation}
A geometric morphism $f: \F \to \E$, with $\F$ a Boolean topos, is sub-open if and only if it factors through $\Sh_{\neg \neg}(\E)$.
\end{fact}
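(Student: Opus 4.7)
My plan is to bridge the two conditions via the intermediate characterisation that, when $\F$ is Boolean, $f^*$ preserves pseudo-complements on every subobject lattice. The first key lemma I would prove is: for $f: \F \to \E$ with $\F$ Boolean, $f$ is sub-open if and only if $f^*(\neg A) = \neg f^*(A)$ for every subobject $A \leq X$ in $\E$. The forward direction is immediate by specialising the openness equation $f^*(A \to B) = f^*(A) \to f^*(B)$ to $B = 0$, noting that $A \to 0 = \neg A$ on both sides.

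For the converse of the key lemma, one inequality is automatic from $f^*$ preserving finite meets, giving $f^*(A \to B) \leq f^*(A) \to f^*(B)$. For the reverse, I would exploit that $\Sub(f^*(X))$ is Boolean (since $\F$ is) to write $f^*(A) \to f^*(B) = \neg f^*(A) \vee f^*(B)$, then use pseudo-complement preservation together with the fact that $f^*$ is a frame homomorphism to get $\neg f^*(A) \vee f^*(B) = f^*(\neg A) \vee f^*(B) = f^*(\neg A \vee B)$. Since $\neg A \vee B \leq A \to B$ in any Heyting algebra, monotonicity of $f^*$ closes the loop. I expect this step — passing from pseudo-complement preservation to full Heyting implication preservation via the detour through $\neg A \vee B$, which sidesteps the failure of Booleanness in the source $\Sub(X)$ — to be the main obstacle.

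Finally, I would show that pseudo-complement preservation by $f^*$ is equivalent to $f$ factoring through $\Sh_{\neg\neg}(\E)$, recalling that the latter is characterised by $f^*$ sending every $\neg\neg$-dense monomorphism to an isomorphism. If $f^*$ preserves pseudo-complements and $\neg\neg A = X$ (equivalently, $\neg A = 0$), then $\neg f^*(A) = f^*(\neg A) = 0$, forcing $f^*(A) = f^*(X)$ by Booleanness of $\Sub(f^*(X))$. For the converse, for any $A \leq X$ the inclusion $A \vee \neg A \hookrightarrow X$ is $\neg\neg$-dense because $\neg(A \vee \neg A) = \neg A \wedge \neg\neg A = 0$, so factoring through $\Sh_{\neg\neg}(\E)$ yields $f^*(A) \vee f^*(\neg A) = f^*(X)$; combined with $f^*(A) \wedge f^*(\neg A) \leq f^*(A \wedge \neg A) = 0$, this exhibits $f^*(\neg A)$ as the Boolean complement of $f^*(A)$ in $\Sub(f^*(X))$. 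Chaining this equivalence with the key lemma delivers the fact.
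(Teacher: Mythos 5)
The paper does not prove this statement at all --- it is quoted as a \emph{Fact} with a citation to Johnstone's \emph{Open maps of toposes}, so there is no in-paper argument to compare against. Your proposal is a correct, self-contained proof, and every step checks out: the reduction of sub-openness to pseudo-complement preservation is sound (the forward direction by specialising to $B=0$; the converse via $f^*(A)\to f^*(B) = \neg f^*(A)\vee f^*(B)$ in the Boolean lattice $\Sub(f^*(X))$, then $f^*(\neg A)\vee f^*(B)=f^*(\neg A\vee B)\leq f^*(A\to B)$, with the easy inequality $f^*(A\to B)\leq f^*(A)\to f^*(B)$ coming from meet-preservation). The second equivalence is also correct: you use the standard criterion that $f$ factors through $\Sh_{\neg\neg}(\E)$ iff $f^*$ inverts all $\neg\neg$-dense monos (a fact the paper itself invokes elsewhere, citing \cite[Proposition A4.3.11]{johnstone_sketches_2002}), the observation that $A\vee\neg A\hookrightarrow X$ is always $\neg\neg$-dense, and uniqueness of complements in a Boolean algebra to conclude $f^*(\neg A)=\neg f^*(A)$. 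Both uses of Booleanness of $\F$ (that $\Sub(f^*(X))$ is Boolean, and that $\neg f^*(A)=0$ forces $f^*(A)=f^*(X)$) are exactly where the hypothesis is needed, and you have correctly isolated the genuinely non-trivial step, namely upgrading pseudo-complement preservation to full Heyting-implication preservation via the detour through $\neg A\vee B$. This is essentially the argument one finds in the cited source; what your write-up buys is that the Fact could be made self-contained within the paper, at the cost of invoking the dense-mono characterisation of factorisation through a subtopos, which the paper already uses.
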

\begin{lemma}\thlabel{sub-open-on-generators-enough}
    It suffices to check sub-openness on a generating set. Explicitly: let $\E$ be a topos with a choice of generating set $\X$ and let $f: \F \to \E$ be a geometric morphism.  If, for each $X \in \X$, the induced map $f^*: \Sub(X) \to \Sub(f^*(X))$ is open, then $f^*: \Sub(Y) \to \Sub(f^*(Y))$ is open for all objects $Y$ in $\E$.
\end{lemma}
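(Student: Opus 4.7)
My plan is to reduce an arbitrary object $Y$ of $\E$ first to a coproduct of generators, and then further to the individual generators. Since $\X$ generates $\E$, I will pick an epimorphism $e: Z \twoheadrightarrow Y$ with $Z = \coprod_{i \in I} X_i$ and each $X_i \in \X$. Because coproducts in a topos are disjoint and universal, there are canonical frame isomorphisms $\Sub(Z) \cong \prod_{i \in I} \Sub(X_i)$, and since $f^*$ preserves coproducts, also $\Sub(f^*(Z)) \cong \prod_{i \in I} \Sub(f^*(X_i))$. Under these identifications, $f^*_Z := f^*|_{\Sub(Z)}$ becomes the product of the maps $f^*_{X_i}$. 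Heyting implication in a product of Heyting algebras is computed componentwise, and each $f^*_{X_i}$ preserves $\Rightarrow$ by hypothesis (this is the characterisation (i) of openness recalled just above), so $f^*_Z$ preserves $\Rightarrow$ and is therefore open.

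Next I will descend openness from $Z$ to $Y$ along $e$. The central diagram is
\[
\begin{tikzcd}
\Sub(Y) \arrow[r, "f^*_Y"] \arrow[d, "e^{-1}"'] & \Sub(f^*(Y)) \arrow[d, "f^*(e)^{-1}"] \\
\Sub(Z) \arrow[r, "f^*_Z"'] & \Sub(f^*(Z)).
\end{tikzcd}
\]
Both vertical arrows are injective, since $e$ is epic and $f^*$ preserves epimorphisms, so $f^*(e)$ is epic too, and pullback along an epimorphism reflects subobject inclusion. Both vertical arrows also preserve Heyting implication: pullback between slice categories of a topos is a logical functor and hence induces a Heyting algebra morphism on subobject posets. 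For any $A,B \leq Y$, chasing the square and invoking the openness of $f^*_Z$ established above gives
\begin{align*}
f^*(e)^{-1}\bigl(f^*_Y(A \Rightarrow B)\bigr)
&= f^*_Z\bigl(e^{-1}(A) \Rightarrow e^{-1}(B)\bigr) \\
&= f^*_Z(e^{-1}(A)) \Rightarrow f^*_Z(e^{-1}(B)) \\
&= f^*(e)^{-1}\bigl(f^*_Y(A) \Rightarrow f^*_Y(B)\bigr),
\end{align*}
and the injectivity of $f^*(e)^{-1}$ yields $f^*_Y(A \Rightarrow B) = f^*_Y(A) \Rightarrow f^*_Y(B)$, which is openness of $f^*_Y$.

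There is no serious obstacle here: the entire argument rests on two well-known, non-local ingredients (pullback between slices is logical, and coproducts in a topos are disjoint and universal) and is otherwise just a diagram chase, made possible by taking characterisation (i) of openness as the working definition so that nothing delicate about left adjoints or Frobenius is required.
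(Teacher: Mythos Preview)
Your proof is correct and follows essentially the same strategy as the paper's: reduce to generators via a covering and use that pullback along any morphism in a topos preserves Heyting implication, together with (joint) injectivity of pullback along epimorphisms, to transport the computation down to the generators.

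The only packaging difference is that you first assemble the covering family into a single epimorphism $e\colon \coprod_i X_i \twoheadrightarrow Y$ and establish openness of $f^*$ on the coproduct via the product decomposition $\Sub\bigl(\coprod_i X_i\bigr)\cong \prod_i \Sub(X_i)$, whereas the paper works directly with the jointly epimorphic family $\{g_i\colon X_i\to Y\}$ and uses joint injectivity of the family $\{f^*(g_i)^{-1}\}$ without ever forming the coproduct. Your version has one extra (easy) step but is otherwise the identical diagram chase; neither approach offers a real advantage over the other.
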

\begin{proof}
    The exists a jointly epimorphic family of morphisms $\{g_i: X_i \to Y\}_{i \in I}$ in $\E$ with $X_i \in \X$. Thus, there exists a commuting square of frames
    \[
    \begin{tikzcd}
        \Sub(Y) \ar{r}{f^*} \ar{d}[']{g_i^{-1}} & \Sub(f^*(Y)) \ar{d}{f^*(g_i)^{-1}} \\
    \Sub(X_i) \ar{r}{f^*} & \Sub(f^*(X_i)),
    \end{tikzcd}
    \]
    where the vertical maps are open and yield jointly injective families $\{g_i^{-1}\}_{i \in I}$ and $\{f^*(g_i)^{-1}\}_{i \in I}$. Therefore, we compute that, for each $i \in I$ and $A,B \leq Y$,
    \begin{align*}
        f^*(g_i)^{-1} f^*(A \rightarrow B) & = f^* g_i^{-1}(A \rightarrow B) , \\
        & = f^* g_i^{-1}(A) \rightarrow  f^* g_i^{-1}(B) , \\
        & = f^*(g_i)^{-1} f^*(A) \rightarrow f^*(g_i)^{-1} f^*(B), \\
        & = f^*(g_i)^{-1}( f^*(A) \rightarrow f^*(B)).
    \end{align*}
    Now using that the family of maps $\{f^*(g_i)^{-1}\}_{i \in I}$ is jointly injective, we conclude that $f^*(A \rightarrow B) =  f^*(A) \rightarrow f^*(B)$ as desired.
\end{proof}
\begin{proof}[Proof of \thref{points-factorising-through-double-negation}]
    First consider the case where $p: \Set \to \Set[T]$ factors through $\Sh_{\neg \neg}(\Set[T])$.  By \thref{factor-through-double-negation-is-ec}, it remains to show that $M$ is atomic. As $p$ is sub-open, $p^*$ is an open map on subobjects, in particular $p^*$ preserves arbitary meets of subobjects. Therefore, for any $a \in M$ we have
    \begin{align*}
        \bigcap \{\psi(M) \mid M \models \psi(a)\} & = \bigcap \{p^*(\psi(G_T)) \mid M \models \psi(a) \} \\
        & = p^*\left(\bigwedge \{ \psi(G_T) \mid M \models \psi(a) \}\right).
    \end{align*}
    But $\bigwedge \{ \psi(G_T) \mid M \models \psi(a) \}$ is a subobject of the generic model $G_T$ in $\Set[T]$, and every such subobject is of the form $\phi(G_T)$. It is now easily checked that $\phi(x)$ supports $a$.

    Conversely, suppose that $M$ is both e.c.\ and atomic, which entails that $M$ is s.e.c.\ and atomic by \thref{classical-ec-is-sec}.  We aim to show that $p^*$ preserves Heyting implication of subobjects.  By \thref{sub-open-on-generators-enough}, it suffices to show that $p^*$ preserves Heyting implication of subobjects of the form $\phi(G_T)$, since the interpretations of formulas in the generic model generate the topos $\Set[T]$.  That is, we aim to show that
    \[
        \phi(M) \rightarrow \psi(M) = \{a \in M \mid M \not \models \phi(a) \text{ or } M \models \psi(a)\}
    \]
    is equal to
    \[
        p^*(\phi(G_T) \rightarrow \psi(G_T)) = \bigcup \{ \chi(M) \mid T \text{ proves } \phi(x) \land \chi(x) \vdash_x \psi(x) \}.
    \]
    One inclusion, namely that $p^*(\phi(G_T) \rightarrow \psi(G_T))$ is included in $\phi(M) \rightarrow \psi(M)$, follows from the universal property of the Heyting implication. We prove the other inclusion.
    
    Clearly, if $M \models \psi(a)$, then tautologically $T$ proves $\phi(x) \land \psi(x) \vdash_x \psi(x)$ and so $a \in \psi(M) \subseteq \phi(M) \rightarrow \psi(M)$. Suppose instead that $M \not \models \phi(a)$.  Since $M$ is atomic, there is some formula $\xi(x)$ that supports $a$. We claim that $T$ proves $\phi(x) \land \xi(x) \vdash_x \psi(x)$, and hence $a \in \phi(M) \rightarrow \psi(M)$.  First, express $\phi(x)$ as a disjunction $\bigvee_{i \in I} \phi_i(x)$ of coherent formulas. Now let $i \in I$ be arbitrary. As $M$ is s.e.c.\ and $M \not \models \phi_i(a)$, there is some $\zeta_i(x)$ such that $M \models \zeta_i(a)$ and $T$ proves $\phi_i(x) \land \zeta_i(x) \vdash_x \bot$. By the choice of $\xi(x)$, $T$ proves $\xi(x) \vdash_x \zeta_i(x)$, and so $T$ also proves $\phi_i(x) \wedge \xi(x) \vdash_x \bot$. As $i \in I$ was arbitrary, the chain of derivations
    \[
    \phi(x) \land \xi(x) \vdash_x {\textstyle \bigvee_{i \in I} \phi_i(x) \land \xi(x) } \vdash_x \bot \vdash_x \psi(x)
    \]
    can be derived from $T$.  Thus, $p$ is sub-open and so factors through $\Sh_{\neg \neg}(\Set[T])$ as desired.
\end{proof}

\section{Countably categorical theories and atomic toposes}
\label{sec:countably-categorical-theories-and-atomic-toposes}
Finally,  using \cite{caramello_atomic_2012}, we relate atomic e.c.\ models to the notion of countable categoricity from positive model theory \cite{haykazyan_spaces_2019, kamsma_bilinear_2023}. For this section, we revert to our convention that a ``model'' intends a model in $\Set$.
\begin{definition}
\thlabel{countably-categorical}
A coherent theory $T$ is called \emph{countably categorical} if, up to isomorphism, there is one (at most) countable e.c.\ model of $T$.
\end{definition}
\begin{remark}
\thlabel{different-notions-of-countable-categoricity}
We have taken the definition of countably categorical from positive model theory here (see e.g., \cite[Theorem 6.5]{haykazyan_spaces_2019} or \cite[Definition 5.1]{kamsma_bilinear_2023}). Note that this definition restricts to e.c.\ models, and so it allows $T$ to have non-isomorphic countable models. This is in contrast to for example \cite[Definition 3.14]{caramello_atomic_2012}, where a geometric theory $T$ is called countably categorical if any two countable models are isomorphic. The two major differences between these definitions are then that \cite{caramello_atomic_2012} considers \emph{all} models and that the version in \cite{caramello_atomic_2012} can be vacuously satisfied. Hence, a coherent theory $T$ is countable categorical (in our sense) if and only if $T^{\ec}$ has a countable model and is countably categorical in the sense of \cite{caramello_atomic_2012}. The requirement that $T^{\ec}$ has a countable model is automatic in most practical cases: if $T$ is a countable theory, then this follows from the downward L\"owenheim-Skolem theorem.
\end{remark}
\begin{example}
The reason for restricting ourselves to e.c.\ models in \thref{countably-categorical} is because positive model theory only studies the e.c.\ models. For example, take the theory from \thref{classical-ec-examples}(i), which consisted of a countable infinity of distinct constant symbols and nothing more. Clearly this theory has many non-isomorphic countable models: namely one for every $n \leq \omega$, which corresponds to having $n$ elements that are not the interpretation of a constant symbol. When $n = 0$ this describes the only e.c.\ model. In particular, there is only one countable e.c.\ model, and so this theory is countably categorical.

A more natural example comes from \cite[Definition 2.1]{kamsma_bilinear_2023} by considering the theory of $K$-bilinear spaces, for a fixed field $K$. If $K$ is countable then this theory turns out to be countably categorical \cite[Corollary 5.9]{kamsma_bilinear_2023}. However, there are still many non-isomorphic countable models, but these will in general not even be $K$-bilinear spaces. For example, by compactness there will be countable models with a realisation for the set of formulas $\{[x,y] \neq \lambda \mid \lambda \in K\}$.
\end{example}
\begin{remark}
    Our definition of countably categorical allows for having a unique finite model. Model-theoretically, this case is often ignored, because a complete first-order theory with a finite model has that model as its only model. Analogously, a coherent theory with JCP (see \thref{jcp}) with a finite e.c.\ model has that model as its only e.c.\ model. However, there is no harm in allowing this trivial case.
\end{remark}
\begin{definition}
\thlabel{jcp}
A coherent theory $T$ is said to have the \emph{joint continuation property}, or \emph{JCP}, if for any two of its models $M$ and $M'$, there is a third model $N$ admitting homomorphisms $M \to N \leftarrow M'$.
\end{definition}
In light of the fact that every model of $T$ admits a homomorphism into an e.c.\ model (\thref{every-model-continues-to-ec-model}), we may assume that the model $N$ above is an e.c.\ model.
\begin{lemma}
\thlabel{jcp-iff-complete}
A consistent coherent theory has JCP if and only if $T^{\ec}$ is complete in the sense of \cite[Definition 3.4]{caramello_atomic_2012}, i.e.\ every geometric sentence is $T^{\ec}$-provably equivalent to either $\top$ or $\bot$, but not both.
\end{lemma}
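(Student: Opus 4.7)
The plan is to prove both directions using the fact that homomorphisms out of e.c.\ models are immersions (\thref{homomorphism-immersion,existentially-closed-model}) combined with \thref{every-model-continues-to-ec-model}.

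For $(\Rightarrow)$, assume $T$ has JCP. The ``not both'' clause amounts to the consistency of $T^{\ec}$, which follows from the consistency of $T$: pick any $\Set$-model of $T$ and extend it to an e.c.\ model via \thref{every-model-continues-to-ec-model} to obtain a model of $T^{\ec}$. For completeness, a geometric sentence $\sigma$ belongs to $T^{\ec}$ (i.e.\ is $T^{\ec}$-equivalent to $\top$) exactly when it is satisfied in every e.c.\ model of $T$ in $\Set$, and is $T^{\ec}$-equivalent to $\bot$ exactly when it is satisfied in no e.c.\ model, directly by \thref{t-ec}. Given any two e.c.\ models $N, N'$, JCP supplies a $T$-model $N^*$ with homomorphisms $N \to N^* \leftarrow N'$, which are necessarily immersions since $N, N'$ are e.c. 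For a geometric sentence $\sigma$, the pullback condition of \thref{immersion-pullback} along $1=1$ forces $\sigma(N) = \sigma(N^*) = \sigma(N')$ as subobjects of $1$, so $N \models \sigma \iff N' \models \sigma$. Hence $\sigma$ holds in every e.c.\ model of $T$ or in none, as required.

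For $(\Leftarrow)$, assume $T^{\ec}$ is complete, and take $M, M' \models T$. By \thref{every-model-continues-to-ec-model}, extend to e.c.\ models $N \supseteq M$ and $N' \supseteq M'$, so it suffices to amalgamate $N$ and $N'$. I form the coherent theory $T^\dagger$ in the language of $T$ enriched by two disjoint families of constants $\{c_a\}_{a \in N}$ and $\{d_b\}_{b \in N'}$, whose axioms consist of $T$ together with the positive atomic diagrams of $N$ and $N'$ under these constants. A $\Set$-model of $T^\dagger$ is precisely a $T$-model $N^*$ equipped with homomorphisms $N \to N^*$ and $N' \to N^*$, so consistency of $T^\dagger$ will yield the desired amalgam.

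Suppose, for contradiction, that $T^\dagger$ were inconsistent. Compactness for coherent logic (a standard consequence of \thref{coherent-topos-facts}(iii) applied to $T^\dagger$, which remains coherent) yields coherent formulas $\phi(x)$ and $\psi(y)$ in the language of $T$, together with tuples $a \in N$ and $b \in N'$ such that $N \models \phi(a)$, $N' \models \psi(b)$, and $T$ proves $\phi(x) \wedge \psi(y) \vdash_{x,y} \bot$. The coherent sentence $\exists y\, \psi(y)$ is satisfied in the e.c.\ model $N'$, so by completeness of $T^{\ec}$ it must be $T^{\ec}$-equivalent to $\top$, and hence also holds in the e.c.\ model $N$. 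Combined with $N \models \exists x\, \phi(x)$, this produces tuples in $N$ witnessing $\phi$ and $\psi$ simultaneously, contradicting the $T$-provable sequent together with $N \models T$. Thus $T^\dagger$ is consistent and JCP follows.

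The main subtlety is the appeal to compactness for coherent logic in a language extended by a possibly large set of constants; this is standard via the classifying topos of $T^\dagger$ (or by Morleyising to classical first-order logic, which preserves $\Set$-models), and the axiomatisation of homomorphisms, rather than embeddings, by positive atomic diagrams is likewise routine.
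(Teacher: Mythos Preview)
Your proof is correct and follows essentially the same strategy as the paper: the forward direction uses that homomorphisms out of e.c.\ models are immersions to transfer sentences via a common JCP-target, and the backward direction forms a diagram theory with fresh constants, invokes compactness, and uses completeness of $T^{\ec}$ to transfer an existential sentence between e.c.\ models. The only cosmetic differences are that you first pass to e.c.\ extensions $N,N'$ before taking diagrams (the paper works with $M,M'$ directly) and phrase the compactness step as a contradiction rather than as direct finite satisfiability.
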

\begin{proof}
Suppose that $T$ has JCP. Let $\phi$ be any geometric sentence and let $M$ be an e.c.\ model of $T$. Let $M'$ be any other e.c.\ model of $T$. Then there is a model $N$ admitting homomorphisms $M \to N \leftarrow M'$. As $M$ and $M'$ are both e.c., these homomorphisms are immersions, and so we have that
\[M \models \phi \iff N \models \phi \iff M' \models \phi.\]
We thus see that the truth value of $\phi$ in $M$ determines its truth value in all e.c.\ models of $T$, and so we must have that either $\top \vdash \phi$ or $\phi \vdash \bot$ is in $T^{\ec}$.

For the converse, let $M$ and $M'$ be models of $T$. We extend our language with constant symbols for the elements of $M$ and $M'$ (which we may assume to be disjoint) and define a theory
\[
T_M = \{ \top \vdash_x \chi(a) \mid \chi(x) \text{ is atomic, } a \in M \text{ and } M \models \chi(a) \}.
\]
We define $T_{M'}$ analogously. Now consider the theory $T' = T \cup T_M \cup T_{M'}$. We will show that $T'$ has a model $N$. Such a model $N \models T'$ demonstrates that $T$ has JCP because $N \models T$, and we can define a function $M \to N$ by sending an element of $M$ to the interpretation of the corresponding constant in $N$, which is a homomorphism because $N \models T_M$. Similarly, $N \models T_{M'}$ yields a homomorphism $M' \to N$.

Note that as $T'$ is a coherent theory, we can apply a compactness argument to show that it has a model. First observe that, by taking a finite conjunction of the sentences involved, every finite sub-theory $S \subseteq T_M$ is equivalent to the theory $\{\top \vdash_x \phi(a)\}$ for a finite tuple $a \in M$ and $\phi(x)$ a conjunction of atomic formulas for which $M \models \phi(a)$. Using \thref{every-model-continues-to-ec-model}, there exists a homomorphism $f: M \to N$ with $N$ an e.c.\ model of $T$.  Thus, $N \models \phi(f(a))$ and so $N \models \exists x \phi(x)$. As $T^{\ec}$ is complete, $\exists x \phi(x)$ must be derivable from $T^{\ec}$. Now let $g: M' \to N'$ be another homomorphism with $N'$ an e.c.\ model of $T$.  Then $N' \models \exists x \phi(x)$, so let $b \in N'$ be such that $N' \models \phi(b)$. We can now expand $N'$ to a model of $T \cup T_{M'} \cup \{\top \vdash_x \phi(a)\}$ over our expanded language: for each element $c \in M'$, the corresponding constant symbol is interpreted by $g(c)$, and we interpret constant symbols corresponding to the tuple $a$ as $b$.  The interpretation of the remaining constant symbols coming from $M $ does not matter, and so we obtain a model of $T \cup T_{M'} \cup \{\top \vdash_x \phi(a)\}$. We have thus shown that $T'$ is finitely satisfiable, and so it has a model by compactness, which concludes our proof.
\end{proof}
\begin{theorem}
\thlabel{countably-categorical-iff-atomic}
Let $T$ be a coherent theory in a countable language with JCP. Then $T$ is countably categorical if and only if $\Set[T^{\ec}]$ is atomic and non-degenerate.
\end{theorem}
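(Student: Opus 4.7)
The plan is to reduce the biconditional to Caramello's topos-theoretic characterisation of countable categoricity in \cite{caramello_atomic_2012}, applied to the theory $T^{\ec}$, by translating each side of the equivalence appropriately.

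First, I would invoke \thref{different-notions-of-countable-categoricity} to replace the hypothesis that $T$ is countably categorical in the sense of \thref{countably-categorical} by the conjunction: $T^{\ec}$ has a countable model, and $T^{\ec}$ is countably categorical in the sense of \cite[Definition 3.14]{caramello_atomic_2012} (any two countable models are isomorphic).  Since $T$, and hence $T^{\ec}$, is over a countable signature, this pair of conditions is exactly what Caramello's theorem in \cite{caramello_atomic_2012} characterises: it is equivalent to $\Set[T^{\ec}]$ being atomic and two-valued.

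Second, I would bridge the gap between ``atomic and two-valued'' (Caramello's conclusion) and the ``atomic and non-degenerate'' appearing in our statement, using the standing JCP hypothesis.  By \thref{jcp-iff-complete}, JCP of $T$ (together with consistency) is equivalent to completeness of $T^{\ec}$, which in turn is equivalent to two-valuedness of $\Set[T^{\ec}]$ whenever $T^{\ec}$ is consistent, i.e.\ whenever $\Set[T^{\ec}]$ is non-degenerate.  Conversely, two-valued trivially entails non-degenerate.  Thus, under the assumption of JCP, the conditions ``$\Set[T^{\ec}]$ atomic and non-degenerate'' and ``$\Set[T^{\ec}]$ atomic and two-valued'' coincide.

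Assembling these two translations yields the biconditional in both directions: for the forward implication, countable categoricity of $T$ provides a countable model of $T^{\ec}$ and Caramello-style categoricity, which gives atomic plus two-valued, which reduces to atomic plus non-degenerate under JCP; for the converse, atomic plus non-degenerate plus JCP upgrades to atomic plus two-valued, from which Caramello's theorem extracts both countable categoricity in her sense and existence of a countable model, and \thref{different-notions-of-countable-categoricity} then returns countable categoricity of $T$.  The main obstacle is purely bibliographic---locating the precise form of Caramello's result to cite---together with minor bookkeeping to track existence of a countable model (needed for the forward direction, and obtained as output in the backward direction via downward L\"owenheim–Skolem in a countable signature); no new model-theoretic or topos-theoretic ingredients are required beyond what has already been developed.
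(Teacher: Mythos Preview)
Your approach is a genuine alternative to the paper's, and it is correct in outline, but it differs in a way worth noting.

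The paper does not go through condition~(i) of Caramello's theorem at all. Instead, after using \thref{jcp-iff-complete} to obtain completeness of $T^{\ec}$, it applies \cite[Theorem~3.16]{caramello_atomic_2012} in the form of the equivalence between (ii) ``$\Set[T^{\ec}]$ is atomic'' and (iii) ``every model of $T^{\ec}$ is atomic'', and then closes the loop using Haykazyan's positive Ryll--Nardzewski theorem \cite[Theorem~6.5]{haykazyan_spaces_2019}, which identifies (iii) with countable categoricity of $T$. Your route, by contrast, uses the translation in \thref{different-notions-of-countable-categoricity} and appeals directly to the equivalence (i)~$\Leftrightarrow$~(ii) in Caramello's theorem, avoiding Haykazyan's result altogether.

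There is one point where your plan is looser than the paper's. As the paper records it, condition~(i) of \cite[Theorem~3.16]{caramello_atomic_2012} reads ``any two countable models of $T^{\ec}$ are isomorphic \emph{and} $\Set[T^{\ec}]$ is Boolean''. Your forward direction produces only Caramello-categoricity of $T^{\ec}$ (via \thref{different-notions-of-countable-categoricity}); to feed into (i)~$\Rightarrow$~(ii) you also need Booleanness of $\Set[T^{\ec}]$, which you have not explained how to obtain. The paper's detour through (iii) and Haykazyan's theorem is precisely what circumvents this issue. You may well be able to locate a formulation in \cite{caramello_atomic_2012} that drops the Boolean hypothesis (you flag this as the bibliographic obstacle), but as written this is the one substantive gap. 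A second, more minor point: completeness of $T^{\ec}$ via JCP is a \emph{hypothesis} for applying Caramello's theorem, not merely a device for trading ``two-valued'' for ``non-degenerate'' at the end---your step~2 should invoke it before, not after, the appeal to Caramello.
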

\begin{proof}
Firstly, by \thref{jcp-iff-complete}, $T^{\ec}$ is complete in the sense of \cite{caramello_atomic_2012}. Next, we note that, under either assumption in the result, $T^{\ec}$ has a model.  If $T$ is countably categorical, then by definition $T^{\ec}$ has a model, namely the unique countable e.c.\ model of $T$. On the other hand, $\Set[T^{\ec}]$ has enough points by construction and so it is non-degenerate if and only if it has a point, i.e.\ $T^{\ec}$ has a model.

We can thus apply \cite[Theorem 3.16]{caramello_atomic_2012}, which (once specialised to our situation) states that the following are equivalent:
\begin{enumerate}[label=(\roman*)]
\item any two countable models of $T^{\ec}$ are isomorphic and $\Set[T^{\ec}]$ is Boolean,
\item $\Set[T^{\ec}]$ is atomic,
\item every model of $T^{\ec}$ is atomic.
\end{enumerate}
We deduce the result by using the characterisation of countably categorical coherent theories \cite[Theorem 6.5]{haykazyan_spaces_2019}, which asserts that (iii) is satisfied if and only if $T$ is countably categorical.
\end{proof}
\begin{corollary}
\thlabel{infinitary-first-order-becomes-geometric-in-countably-categorical}
Let $T$ be a countably categorical coherent theory with JCP in a countable language. Then every infinitary full first-order formula in $\L_{\infty, \omega}$ is $T^{\ec}$-provably equivalent to a geometric formula, modulo the deduction-system for classical infinitary first-order logic.
\end{corollary}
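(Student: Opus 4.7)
The plan is to reduce the statement to the single observation that, under these hypotheses, every geometric formula has a geometric ``classical negation'' modulo $T^{\ec}$; once this is established, a routine structural induction on $\L_{\infty,\omega}$-formulas finishes the job.

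First, I would invoke \thref{countably-categorical-iff-atomic} to conclude that $\Set[T^{\ec}]$ is atomic. Combined with the standard fact that every atomic Grothendieck topos is Boolean (see e.g.\ \cite[Lemma C3.5.1]{johnstone_sketches_2002}), this gives that every subobject in $\Set[T^{\ec}]$ is complemented. Given a geometric formula $\phi(x)$ whose free variables are of sorts $s_1, \ldots, s_n$, the subobject $\phi(G_{T^{\ec}})$ of the product of sort interpretations $S_1 \times \cdots \times S_n$ therefore has a complement; by the Fact recalled in Section~\ref{sec:preliminaries-and-notation} (applied to the tautological formula $\top(x)$, whose interpretation in $G_{T^{\ec}}$ is $S_1 \times \cdots \times S_n$), this complement is of the form $\tilde\phi(G_{T^{\ec}})$ for some geometric $\tilde\phi(x)$. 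The subobject identities $\phi(G_{T^{\ec}}) \wedge \tilde\phi(G_{T^{\ec}}) = 0$ and $\phi(G_{T^{\ec}}) \vee \tilde\phi(G_{T^{\ec}}) = S_1 \times \cdots \times S_n$ translate, via the universal property of the classifying topos, into $T^{\ec}$-derivability of the sequents $\phi(x) \wedge \tilde\phi(x) \vdash_x \bot$ and $\top \vdash_x \phi(x) \vee \tilde\phi(x)$. Hence $\tilde\phi(x)$ is classically $T^{\ec}$-provably equivalent to $\neg \phi(x)$.

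The structural induction on $\L_{\infty,\omega}$-formulas then runs smoothly: atomic formulas, finite conjunctions, existential quantifications, and infinitary disjunctions are already geometric; negations are handled by the previous paragraph; universal quantifiers reduce via $\forall x\, \phi \equiv \neg \exists x\, \neg \phi$; and infinitary conjunctions reduce via $\bigwedge_i \phi_i \equiv \neg \bigvee_i \neg \phi_i$, using negation and infinitary disjunction.

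The only slightly delicate point is the input that atomicity implies Booleanness of the classifying topos, but this is a classical topos-theoretic fact; beyond that, I anticipate no substantial obstacle, since every remaining case of the induction is purely formal.
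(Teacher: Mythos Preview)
Your argument is correct and shares its opening move with the paper: both invoke \thref{countably-categorical-iff-atomic} to obtain that $\Set[T^{\ec}]$ is atomic, hence Boolean, and both use that subobjects of the generic model are interpretations of geometric formulas. From there the two diverge. The paper appeals to an external completeness result (\cite[Theorem 5.14]{kamsma_classifying_2023}) stating that in a Boolean topos, derivability in classical $\L_{\infty,\omega}$ coincides with validity in the generic model; this lets the paper finish in one line, since every definable subobject is already the interpretation of a geometric formula. You instead extract only the elementary consequence of Booleanness---that each geometric $\phi$ has a geometric complement $\tilde\phi$ which is classically $T^{\ec}$-equivalent to $\neg\phi$---and then run an explicit structural induction over $\L_{\infty,\omega}$. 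Your route is more self-contained (no need for the cited completeness theorem) and makes the mechanism transparent, at the cost of spelling out the routine inductive cases; the paper's route is terser but leans on a nontrivial black box.
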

\begin{proof}
By \thref{countably-categorical-iff-atomic}, $\Set[T^{\ec}]$ is atomic, and hence Boolean. Therefore, the deducibility of infinitary first-order sequents modulo the deduction-system for classical infinitary first-order logic coincides with validity in the generic model in $\Set[T^{\ec}]$ (see \cite[Theorem 5.14]{kamsma_classifying_2023}). We conclude by noting that every subobject of the generic model in $\Set[T^{\ec}]$ is the interpretation of some geometric formula.
\end{proof}

\bibliographystyle{alpha}
\bibliography{bibfile}

\begin{thebibliography}{Kam23b}

\bibitem[BF07]{bunge_quasi_2007}
Marta Bunge and Jonathon Funk.
\newblock Quasi locally connected toposes.
\newblock {\em Theory and Applications of Categories}, 18(8):209--239, April 2007.

\bibitem[BS83]{blass_classifying_1983}
Andreas Blass and Andrej S\u{c}\u{e}drov.
\newblock Classifying topoi and finite forcing.
\newblock {\em Journal of Pure and Applied Algebra}, 28(2):111--140, May 1983.

\bibitem[BY03]{ben-yaacov_simplicity_2003}
Itay Ben-Yaacov.
\newblock Simplicity in compact abstract theories.
\newblock {\em Journal of Mathematical Logic}, 03(02):163--191, November 2003.

\bibitem[BYP07]{ben-yaacov_fondements_2007}
Itay Ben-Yaacov and Bruno Poizat.
\newblock Fondements de la logique positive.
\newblock {\em The Journal of Symbolic Logic}, 72(4):1141--1162, December 2007.

\bibitem[Car12]{caramello_atomic_2012}
Olivia Caramello.
\newblock Atomic {Toposes} and {Countable} {Categoricity}.
\newblock {\em Applied Categorical Structures}, 20(4):379--391, August 2012.

\bibitem[Car17]{caramello_theories_2017}
Olivia Caramello.
\newblock {\em Theories, {Sites}, {Toposes}: {Relating} and studying mathematical theories through topos-theoretic `bridges'}.
\newblock Oxford University Press, Oxford, New York, December 2017.

\bibitem[CH99]{chatzidakis-difference-fields-1999}
Zo\'e Chatzidakis and Ehud Hrushovski.
\newblock Model theory of difference fields.
\newblock {\em Transactions of the American Mathematical Society}, 351(8):2997--3071, 1999.

\bibitem[Gir64]{giraud_exactness}
Jean Giraud.
\newblock Analysis situs.
\newblock {\em S\'{e}minaire Bourbaki}, 8:189--199, 1962-1964.

\bibitem[Hay19]{haykazyan_spaces_2019}
Levon Haykazyan.
\newblock Spaces of types in positive model theory.
\newblock {\em The Journal of Symbolic Logic}, 84(2):833--848, June 2019.

\bibitem[HK21]{haykazyan_existentially_2021}
Levon Haykazyan and Jonathan Kirby.
\newblock Existentially closed exponential fields.
\newblock {\em Israel Journal of Mathematics}, 241(1):89--117, March 2021.

\bibitem[Joh80]{johnstone_open_1980}
Peter Johnstone.
\newblock Open maps of toposes.
\newblock {\em manuscripta mathematica}, 31(1):217--247, March 1980.

\bibitem[Joh02]{johnstone_sketches_2002}
Peter Johnstone.
\newblock {\em Sketches of an {Elephant}: {A} {Topos} {Theory} {Compendium}}.
\newblock Oxford University Press, 2002.

\bibitem[JT84]{joyal-tierney-1984}
Andr\'{e} Joyal and Myles Tierney.
\newblock An extension of the {G}alois theory of {G}rothendieck.
\newblock {\em Memoirs of the American Mathematical Society}, 51(309), 1984.

\bibitem[Kam23a]{kamsma_bilinear_2023}
Mark Kamsma.
\newblock Bilinear spaces over a fixed field are simple unstable.
\newblock {\em Annals of Pure and Applied Logic}, 174(6), June 2023.

\bibitem[Kam23b]{kamsma_classifying_2023}
Mark Kamsma.
\newblock Classifying toposes for non-geometric theories, December 2023.
\newblock arXiv:2312.11528.

\bibitem[Mac97]{macintyre-generic-automorphism-1997}
Angus Macintyre.
\newblock Generic automorphisms of fields.
\newblock {\em Annals of Pure and Applied Logic}, 88(2):165--180, 1997.

\bibitem[MM94]{maclane_sheaves_1994}
Saunders MacLane and Ieke Moerdijk.
\newblock {\em Sheaves in {Geometry} and {Logic}: {A} {First} {Introduction} to {Topos} {Theory}}.
\newblock Universitext. Springer-Verlag, New York, 1994.

\bibitem[TZ12]{tent_course_2012}
Katrin Tent and Martin Ziegler.
\newblock {\em A {Course} in {Model} {Theory}}.
\newblock Cambridge University Press, March 2012.

\end{thebibliography}


\end{document}